\newtheorem{thm}{Theorem}[section]
\newtheorem{lem}[thm]{Lemma}
\newtheorem{prp}[thm]{Proposition}
\newtheorem{cor}[thm]{Corollary}
\theoremstyle{definition}
\newtheorem{construction}[thm]{Construction}
\newtheorem{remark}[thm]{Remark}
\theoremstyle{plain}
\newcommand{\rem}[1]{}
\newcommand{\F}{\mathbb{F}}
\newcommand{\N}{\mathbb{N}}
\newcommand{\frakm}{{\mathfrak{m}}}
\newcommand{\bbA}{{\mathbb{A}}}
\newcommand{\bbP}{{\mathbb{P}}}
\newcommand{\vphi}{\varphi}
\newcommand{\idealof}{\unlhd} 
\newcommand{\suchthat}{\,:\,}
\newcommand{\where}{\,|\,}
\newcommand{\Trings}[1]{\left< #1 \right>}
\newcommand{\trings}[1]{\langle {#1} \rangle}
\newcommand{\floor}[1]{\lfloor {#1} \rfloor}
\DeclareMathOperator{\GL}{GL} %
\DeclareMathOperator{\Hom}{Hom} %
\DeclareMathOperator{\id}{id} %
\DeclareMathOperator{\im}{im} %
\DeclareMathOperator{\Max}{Max}
\DeclareMathOperator{\Mor}{Mor} %
\DeclareMathOperator{\Span}{span} %
\DeclareMathOperator{\Spec}{Spec} %
\DeclareMathOperator{\Stab}{Stab} %
\newcommand{\nGL}[2]{\mathrm{GL}_{#2}({#1})}
\newcommand{\nPGL}[2]{\mathrm{PGL}_{#2}({#1})}
\newcommand{\nMat}[2]{\mathrm{M}_{#2}(#1)}
\newcommand{\trans}{{\mathrm{t}}}
\newcommand{\units}[1]{{#1^\times}}
\numberwithin{equation}{section} 
\newcommand{\M}[1][n]{\mathrm{M}_{{#1}}}
\renewcommand{\GL}[1][n]{\mathrm{GL}_{#1}}
\newcommand{\PGL}[1][n]{\mathrm{PGL}_{#1}}
\newcommand{\Ga}{{\mathbb{G}}_{\mathit{a}}}
\newcommand{\Gm}{{\mathbb{G}}_{\mathit{m}}}
\DeclareMathOperator{\Sub}{Sub}
\newcommand{\indep}[1][]{\mathrm{indep}_{{#1}}}
\title{Irredundant Generating Sets for Matrix Algebras}
\author{Yonatan Blumenthal$^*$}
\address{$^*$University of Haifa}
\email{yonatan@blumenthal.org.il}
\author{Uriya A.\ First$^*$}
\email{ufirst@univ.haifa.ac.il}
\begin{document}

\maketitle

\begin{abstract}
Let $F$ be a field.
We show that the largest irredundant generating
sets for the algebra of $n\times n $
matrices over $F$ have $2n-1$ elements when $n>1$.
(A result of Laffey states that the answer is $2n-2$
when $n>2$, but its proof contains an error.)
We further give a classification of the largest
irredundant generating sets when $n\in\{2,3\}$
and $F$ is algebraically closed.
We use this description to compute the dimension of
the variety
of $(2n-1)$-tuples of $n\times n$ matrices
which form an irredundant generating set when $n\in\{2,3\}$, and draw 
some consequences to \emph{locally redundant} generation of Azumaya algebras.
In the course of   proving the classification, we also determine
the largest 
sets $S$ of subspaces of $F^3$ with the property
that every $V\in S$ admits a matrix stabilizing every subspace in $S-\{V\}$
and  not stabilizing $V$.
\end{abstract}

\section{Introduction}

Let $F$ be a field and $n\in\N$. A set of matrices
$S\subseteq \M(F)$ is called an \emph{irredundant
generating set} for $\M(F)$ if $S$ generates $\M(F)$
(as a unital $F$-algebra), and no proper subset of $S$
generates $\M(F)$. 
It is well-known that $\M(F)$ can be generated by $2$ elements. However,
for $n>1$, $\M(F)$ has larger irredundant generating subsets.
Our first  result  determines the size
of the largest ones.

\begin{thm}\label{TH:intro-main}
	Suppose $n>1$. The largest irredundant generating sets for $\M(F)$
	have $2n-1$ elements.
\end{thm}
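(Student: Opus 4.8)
The two inequalities are established separately. For the \emph{lower bound} I would exhibit an explicit irredundant generating set of size $2n-1$, generalizing the case $n=2$ (where $A_1=\left(\begin{smallmatrix}1&0\\1&0\end{smallmatrix}\right)$, $A_2=\left(\begin{smallmatrix}0&1\\0&1\end{smallmatrix}\right)$, $A_3=\left(\begin{smallmatrix}0&0\\0&1\end{smallmatrix}\right)$ works). Take the two opposite complete flags $F_i=\Span(e_1,\dots,e_i)$ and $G_j=\Span(e_n,\dots,e_{n-j+1})$ for $1\le i,j\le n-1$, together with the line $W=\Span(e_1+\dots+e_n)$ --- a family of $2(n-1)+1=2n-1$ subspaces. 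The common stabilizer of the two flags is the diagonal torus; intersecting it with $\Stab(W)$ leaves only the scalars, yet deleting any one of the $2n-1$ subspaces leaves a common stabilizer strictly larger than the scalars. Hence for each subspace $V$ in the family one may choose $A_V\in\bigl(\bigcap_{V'\neq V}\Stab(V')\bigr)\setminus\Stab(V)$. Choosing the $A_V$ generically (or, over a small finite field, explicitly) one verifies that $\{A_V\}$ generates $\M(F)$ --- otherwise the generated algebra would lie in some $\Stab(U)$ with $0\neq U\subsetneq F^n$, and a short case analysis excludes this. Then $\{A_V\}$ is irredundant, since omitting $A_V$ leaves a set inside the proper subalgebra $\Stab(V)$.

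For the \emph{upper bound} I would first reduce to $F$ algebraically closed: the subalgebra of $\M(F)$ generated by a subset commutes with the base change to $\bar F$, so an irredundant generating set over $F$ stays irredundant and generating over $\bar F$. Now let $S=\{A_1,\dots,A_m\}$ be an irredundant generating set of $\M[n](\bar F)$. For each $i$ the subalgebra $\langle S\setminus\{A_i\}\rangle$ is proper, hence by Burnside's theorem acts reducibly on $\bar F^n$, i.e.\ $\langle S\setminus\{A_i\}\rangle\subseteq\Stab(W_i)$ for some proper nonzero subspace $W_i$; moreover $A_i\notin\Stab(W_i)$, as otherwise $\M[n](\bar F)=\langle S\setminus\{A_i\},A_i\rangle\subseteq\Stab(W_i)$. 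Thus $A_i\in\Stab(W_j)\iff i\neq j$, which in particular forces the $W_i$ to be distinct. So it suffices to prove: a family of distinct proper nonzero subspaces $W_1,\dots,W_m$ of $\bar F^n$ with $\bigcap_{j\neq i}\Stab(W_j)\not\subseteq\Stab(W_i)$ for every $i$ has at most $2n-1$ members.

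I would prove this combinatorial statement by induction on $n$. The case $n=2$ is immediate: any three distinct lines in $\bar F^2$ have only scalar common stabilizer, so a fourth line could never be irredundant. For the inductive step, observe that a matrix witnessing the irredundancy of $W_j$ --- one stabilizing all $W_k$ with $k\neq j$ but not $W_j$ --- in particular stabilizes every $W_i$ with $i\neq j$. Hence if some $W_i$ is a line $L$, all the witnesses for the remaining subspaces stabilize $L$ and therefore descend to $\bar F^n/L$, where the images of the other $W_j$ again form such a family --- \emph{except} for those $W_j$ whose witness moves $W_j$ only in the direction of $L$ (and those whose image is the whole quotient), which drop out. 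Dually, if some $W_i$ is a hyperplane one restricts to it. The crux is to show that for a judicious choice of line or hyperplane to reduce along, at most one further subspace is lost, whence $m\le\bigl(2(n-1)-1\bigr)+2=2n-1$; the situations with many lines, with many hyperplanes, or with neither among the $W_i$ have to be treated with care --- by dualizing, or by a direct analysis of the common stabilizer algebra and its lattice of invariant subspaces. I expect this bookkeeping to be the main obstacle, and --- since the correct value exceeds Laffey's by exactly one --- to be precisely where his proof of the weaker bound $2n-2$ goes wrong, by undercounting the subspaces that survive the reduction.
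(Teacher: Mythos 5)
Your lower bound is in the right spirit and essentially matches the paper's construction: the explicit set in \eqref{EQ:irred-gen-ex} does indeed witness the two opposite complete flags together with the line spanned by $(1,-1,1,\dots)^{\mathrm{t}}$, and the paper simply writes down the matrices rather than choosing them generically. Your derivation that an irredundant generating set yields a family of proper nonzero subspaces $W_1,\dots,W_m$ with $\bigcap_{j\neq i}\Stab(W_j)\not\subseteq\Stab(W_i)$ for all $i$ is also correct (it is Proposition~\ref{PR:indep-num-and-irred-gen} in the paper, modulo the small correction that Burnside-type arguments produce matrices in the algebra rather than automatically in $\GL$, which is handled by shifting by a scalar).

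The serious gap is in the upper bound. You reduce to showing that $\indep_{\GL(\bar F)}\Sub(\bar F^n)\leq 2n-1$, but this inequality is an open problem for $n>3$ --- the paper states this explicitly and proves it only for $n\in\{2,3\}$, by a delicate case analysis (Theorems~\ref{TH:all-lines}, \ref{TH:all-but-one-lines}, \ref{TH:GL-indep-3}) that does not obviously organize itself as your quotient/restriction induction. The inductive step you propose --- quotient by a line (or restrict to a hyperplane) and argue that ``at most one further subspace is lost'' --- is precisely the hard part, and you have not given an argument for it; neither the paper's $n=3$ proof nor Laffey's work proceeds along these lines, so there is no existing evidence that the bookkeeping closes. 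Since Proposition~\ref{PR:indep-num-and-irred-gen} is only an inequality, if $\indep_{\GL}\Sub(\bar F^n)$ turns out to exceed $2n-1$ for some $n$ your reduction would not give the theorem at all. The paper instead bounds irredundant generating sets directly in the matrix algebra, using Laffey's theorem on non-triangularizable subsets (Theorem~\ref{TH:Laffey-triang}), the Wedderburn decomposition $B=A\oplus J$ of the subalgebra generated by such a subset, Laffey's transfer theorem~\ref{TH:Laffey-ind} to pass to a smaller matrix algebra, and a ``completion'' lemma (Corollary~\ref{CR:completion}), all wired into an induction on $n$. Finally, your speculation about the location of Laffey's error is mistaken: the error (Remark~\ref{RM:mistake}) is a concrete false membership claim in an explicit $3\times 3$ computation, not an undercount of surviving subspaces in any reduction of the kind you describe.
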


We remark that since $\M(F)$ is finite  dimensional,
every generating set for $\M(F)$ contains a finite generating
set, which in turn contains  an irredundant generating set.
Thus, every generating set for $\M(F)$ contains a generating subset
with $2n-1$  or less elements.

An explicit example of an irredundant generating set for $\M(F)$
having $2n-1$ elements is $\{a_1,\dots,a_{n-1},a'_1,\dots,a'_{n-1},b\}$,
where
\begin{align}\label{EQ:irred-gen-ex}
a_1 &=
\left[
\begin{smallmatrix}
1 & 1 &   & & \\
0 & 0 &   & & \\
  &   & 0 & & \\
  &   &   & 0 & \\[-0.3em]
  &   &   &        & \ddots \\
  &   &   &        & & 0
\end{smallmatrix}
\right],
\quad
a_2 =
\left[
\begin{smallmatrix}
0 &  &   & & \\
 & 1 & 1  & & \\
  &  0 & 0 & & \\
  &   &   & 0 & \\[-0.3em]
  &   &   &        & \ddots \\
  &   &   &        & & 0
\end{smallmatrix}
\right],
\quad
\dots,
\quad
a_{n-1} =
\left[
\begin{smallmatrix}
0 &  &   & & \\[-0.3em]
 & \ddots &    & & \\
  &    & 0 & & \\
  &   &   & 0 & \\
  &   &   &   & 1 & 1 \\
  &   &   &   & 0 & 0
\end{smallmatrix}
\right],
\\
a'_1 &=
\left[
\begin{smallmatrix}
0 & 0 &   & & \\
1 & 1 &   & & \\
  &   & 0 & & \\
  &   &   & 0 & \\[-0.3em]
  &   &   &        & \ddots \\
  &   &   &        & & 0
\end{smallmatrix}
\right],
\quad
a'_2 =
\left[
\begin{smallmatrix}
0 &  &   & & \\
 & 0 & 0  & & \\
  &  1 & 1 & & \\
  &   &   & 0 & \\[-0.3em]
  &   &   &        & \ddots \\
  &   &   &        & & 0
\end{smallmatrix}
\right],
\quad
\dots,
\quad
a'_{n-1} =
\left[
\begin{smallmatrix}
0 &  &   & & \\[-0.3em]
 & \ddots &    & & \\
  &    & 0 & & \\
  &   &   & 0 & \\
  &   &   &   & 0 & 0 \\
  &   &   &   & 1 & 1
\end{smallmatrix}
\right],
\nonumber
\\
b&=
\left[
\begin{smallmatrix}
1   \\
& 0   \\[-0.3em]
& & \ddots  \\
& & & 0  
\end{smallmatrix}
\right],
\nonumber
\end{align}
see Proposition~\ref{PR:irred-example}.
Theorem~\ref{TH:intro-main} contradicts a result of Laffey \cite{Laffey_1983_irred_gen_sets},
which states that irredundant generating sets
for $\M (F)$ have at most $2n-2$ elements when $n\geq 3$.
However, Laffey's proof   has a mistake, 
see Remark~\ref{RM:mistake}.
Nevertheless, most of Laffey's work remains intact, and we build on some of his
results in order to prove Theorem~\ref{TH:intro-main}. 

\medskip

Our second main result 
is a classification
of the largest irredundant generating sets of $\M(F)$
when $n=3$ and $F$ is algebraically closed. 
We also prove a similar result for $n=2$,
see Proposition~\ref{PR:max-gen-2}, but restrict here to $n=3$
for brevity. 

To phrase our result, 
consider the following operations
on subsets of $\M(F)$:
\begin{enumerate}[label=(\arabic*)]\label{list:operations}
\item conjugating all elements in the set by some $g\in \GL[n](F)$;
\item transposing all elements in the set;
\item replacing a matrix $a$ in the set with $\alpha a+\beta I$ for some $\alpha\neq 0$ and $\beta$ in $F$.
\end{enumerate}
For $S,T\subseteq \M(F)$,
we write $S\sim T$ to denote that $S$ can be transformed into $T$
using operations (1)--(3). Observe that in this case, $S$
is an irredundant generating set if and only if $T$ is.
We show that modulo operations (1)--(3), the $5$-element irredundant generating sets
of $\M[3](F)$ reduce into a $1$-parameter family.

\begin{thm}\label{TH:five-elem-irred-classification}
Suppose that $F$ is algebraically closed.
Then a $5$-element subset $S$ of $\M[3](F)$ is an irredundant generating set 
if and only if there is $\alpha\in F$
for which $S\sim S_\alpha$, where
\[
S_{\alpha} :=\left \{
\left[
\begin{smallmatrix}
1 & 1 & 0 \\
0 & 0 & 0 \\
0 & 0 & 0 
\end{smallmatrix}
\right],
\left[
\begin{smallmatrix}
0 & 0 & 0 \\
1 & 1 & 0 \\
0 & 0 & 0 
\end{smallmatrix}
\right],
\left[
\begin{smallmatrix}
0 & 0 & 0 \\
0 & 1 & 1 \\
0 & 0 & 0 
\end{smallmatrix}
\right],
\left[
\begin{smallmatrix}
0 & 0 & 0 \\
0 & 0 & 0 \\
0 & 1 & 1 
\end{smallmatrix}
\right],
\left[
\begin{smallmatrix}
1 & 0 & 0 \\
0 & 0 & 0 \\ 
0 & 0 & \alpha
\end{smallmatrix}
\right]
\right\}.
\]
\end{thm}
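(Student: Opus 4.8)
The plan is to translate the statement into a problem about configurations of subspaces of $F^3$, using the fact that over an algebraically closed field $F$ the maximal subalgebras of $\M[3](F)$ are precisely the subalgebras $\Stab(V):=\{x\in\M[3](F):xV\subseteq V\}$ with $0\subsetneq V\subsetneq F^3$ (by Burnside's theorem a proper subalgebra cannot act irreducibly on $F^3$), and that distinct $V$ give incomparable subalgebras. The first step is to extract, from a $5$-element irredundant generating set $S=\{a_1,\dots,a_5\}$, five pairwise distinct proper nonzero subspaces $V_1,\dots,V_5$ with $a_jV_i\subseteq V_i$ for $j\ne i$ and $a_iV_i\not\subseteq V_i$. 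This is possible because $A_i:=F[S\setminus\{a_i\}]$ is a proper subalgebra (by irredundancy of $S$), so $F^3$ has a proper nonzero $A_i$-submodule, and if $a_i$ preserved every $A_i$-submodule of $F^3$ then all of $S$ would preserve the $A_i$-submodule lattice and hence lie in a proper subalgebra, contradicting that $S$ generates. Note also that operations (1)--(3) replace $S$ by a $\sim$-equivalent set, with conjugation and transpose acting on $(V_1,\dots,V_5)$ via $\GL[3](F)$ and $V\mapsto V^\perp$ and operation (3) acting trivially; so the classification will ultimately rest on classifying the tuples $(V_i)$ up to $\GL[3](F)$, transpose and reindexing.

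For the ``if'' direction: for each $\alpha\in F$ the subspaces $\langle e_2,e_3\rangle$, $\langle e_1\rangle$, $\langle e_3\rangle$, $\langle e_1,e_2\rangle$, $\langle e_1-e_2+e_3\rangle$ serve as $V_1,\dots,V_5$ for the five matrices of $S_\alpha$ in the order listed, as a direct check shows; being proper and nonzero, they certify that no proper subset of $S_\alpha$ generates $\M[3](F)$, so $S_\alpha$ is irredundant. That $S_\alpha$ generates is, by Burnside, equivalent to $S_\alpha$ having no common invariant proper nonzero subspace, which is a finite eigenspace computation: a common invariant line would be a common eigenline of the first and third matrices of $S_\alpha$ — only $\langle e_1\rangle$ or $\langle e_1-e_2+e_3\rangle$ — and neither survives the fifth and second matrices; a common invariant plane is treated dually via the transposed matrices. (For $\alpha=0$ this also follows from Proposition~\ref{PR:irred-example}.)

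The ``only if'' direction is where the effort lies. Given the tuple $(V_1,\dots,V_5)$ above, the set $\{V_1,\dots,V_5\}$ has the property that each member admits a matrix stabilizing the other four but not itself, and one must classify such sets of proper nonzero subspaces of $F^3$ up to $\GL[3](F)$, transpose and reindexing — exactly the subspace problem announced in the abstract, and the main obstacle here. The engine of the classification is the observation that whenever two of the subspaces in the set lie in a common plane $H$, any matrix stabilizing both stabilizes their span, hence $H$; iterating this, using transpose duality to reduce to the case of at least three planes among the $V_i$, and running a case analysis on the number of planes among the $V_i$ and their mutual incidences, one should be left with the single configuration $\{\langle e_2,e_3\rangle,\langle e_1\rangle,\langle e_3\rangle,\langle e_1,e_2\rangle,\langle e_1-e_2+e_3\rangle\}$. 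The genuinely delicate points are: excluding the configurations in which all five subspaces have the same dimension, or four are planes and one a line, which requires a rigidity statement about point configurations in $\mathbb{P}^2$; and checking that among the surviving ``two planes and three lines'' configurations only the one above can be completed to a tuple of matrices generating $\M[3](F)$ (the others either fail the property in question or trap the matrices in a proper subalgebra).

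Finally, once $(V_1,\dots,V_5)$ has been normalized — by operations (1) and (2) applied to $S$ — to the configuration above, the matrices are determined up to $\sim$ by straightforward linear algebra. Writing $b_1,\dots,b_5$ for the members of $S_0$ in order, so $b_5=\mathrm{diag}(1,0,0)$, one computes $\bigcap_{j\ne i}\Stab(V_j)=F\cdot I+F\cdot b_i$ for each $i\in\{1,2,3,4\}$; since $a_i$ lies in this $2$-dimensional space and is not scalar (a scalar would stabilize $V_i$), operation (3) replaces $a_i$ by $b_i$. And $\bigcap_{j\ne 5}\Stab(V_j)$ is the algebra of diagonal matrices, so $a_5$ is a nonscalar diagonal matrix; after operation (3) it becomes $\mathrm{diag}(1,0,\alpha)$ for some $\alpha\in F$ — which is the fifth matrix of $S_\alpha$ — unless $a_5\sim\mathrm{diag}(0,0,1)$, in which case one further conjugates the whole set by the coordinate transposition $e_1\leftrightarrow e_3$, which permutes $\{b_1,b_2,b_3,b_4\}$ among themselves and carries $\mathrm{diag}(0,0,1)$ to $\mathrm{diag}(1,0,0)=b_5$. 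In every case $S\sim S_\alpha$, which is the claim.
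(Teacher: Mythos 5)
Your overall strategy is the same one the paper uses: pass from a $5$-element irredundant generating set $S$ to a $5$-element set of proper nonzero invariant subspaces of $F^3$ via Burnside (the paper calls such a set a $\GL[3]$-independent subset of $\Sub(F^3)$, Proposition~\ref{PR:indep-num-and-irred-gen}), then classify those subspace configurations up to $\GL[3]$, transpose, and reindexing, and finally pin down the matrices themselves. Your ``if'' direction is correct --- the five subspaces you list do serve as $V_1,\dots,V_5$ for the matrices of $S_\alpha$, and your two-eigenline computation for the common invariant line does close --- and your final normalization step (each $a_i\in\bigcap_{j\ne i}\Stab(V_j)=FI+Fb_i$ for $i\le 4$, diagonal $a_5$, with the coordinate swap $e_1\leftrightarrow e_3$ handling the degenerate diagonal case) is also correct and essentially matches the paper's conclusion.

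However, the core of the proof --- the classification of $5$-element $\GL[3]$-independent subsets of $\Sub(F^3)$, which is Theorem~\ref{TH:GL-indep-3}(ii) in the paper --- is not proved in your proposal, only sketched. You write ``iterating this, \dots\ running a case analysis on the number of planes among the $V_i$ and their mutual incidences, one should be left with the single configuration,'' and then explicitly flag ``the genuinely delicate points'' (excluding the all-same-dimension case and the four-planes-one-line case, and showing that the surviving two-plane-three-line configurations reduce to a single orbit) without resolving them. Those delicate points \emph{are} the work: in the paper they occupy Theorem~\ref{TH:all-lines}, Theorem~\ref{TH:all-but-one-lines}, and the three-claim ``only if'' argument in Theorem~\ref{TH:GL-indep-3}(ii), each relying on the rigidity lemmas~\ref{LM:three-lines-fixed} and~\ref{LM:four-lines-fixed} (a matrix fixing three lines in a plane is scalar on that plane; a matrix fixing four pairwise-generic lines in $F^3$ is scalar). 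Without carrying out that case analysis --- or an equivalent one --- the ``only if'' direction is incomplete. Moreover your phrase ``the others either fail the property in question or trap the matrices in a proper subalgebra'' suggests you expect some two-plane-three-line configurations to survive the subspace classification but fail to lift to generating sets; in fact the paper shows all configurations satisfying the incidence conditions of Theorem~\ref{TH:GL-indep-3}(ii)(1) form a single $\GL[3]$-orbit, so the dichotomy you anticipate does not arise and does not need a separate argument.

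One further small point worth making explicit when you do carry out the normalization: after choosing the $V_i$ you must not only place them in standard position by a conjugation, you must also invoke something like Lemma~\ref{LM:many-inv-subspaces} to see that $a_i$ (for $i\le 4$) has a \emph{repeated} eigenvalue on the right pair of eigenlines --- that is what makes $\bigcap_{j\ne i}\Stab(V_j)$ collapse to the two-dimensional space $FI+Fb_i$ and lets operation~(3) replace $a_i$ by $b_i$. Your proposal states the conclusion of that computation but does not justify it; it is a short but necessary step.
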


The generating sets of $\M[3](F)$ with less than $5$ elements were
described in  \cite{Aslaksen_2009_generators_mat_algs} in terms
of    (non-)vanishing of certain polynomial functions.

Returning to Theorem~\ref{TH:five-elem-irred-classification}, note that the set $S_{0}$ is precisely $\{a_1,a_2,a'_1,a'_2,b\}$,
where the notation is as in \eqref{EQ:irred-gen-ex} with $n=3$.
The element $\alpha$ from the theorem is not unique in general, but can attain
only finitely many values, see Remark~\ref{RM:no-of-alphas}.

As an application of Theorem~\ref{TH:five-elem-irred-classification},
we compute the dimension of the variety of $5$-tuples of $3\times 3$
matrices which form an irredundant generating set.
More formally,   suppose $F$ is algebraically closed 
and, for $r\in \N$, let $I_{r}^{(n)}$ be the set of tuples $(a_1,\dots,a_r)\in \M(F)^r$ 
which form an irredundant generating set for $\M(F)$.
Thinking of $ \M(F)^r$ as an affine space over $F$,
it turns out that the set $I_r^{(n)}$ is Zariski-locally-closed, and therefore has the structure
of an algebraic variety over $F$; see Section~\ref{sec:application}.
Operations (1)--(3) can now be packed into an action of the algebraic group
\[
G=(S_2\ltimes \PGL(F))\times (S_r\ltimes (\Gm\ltimes \Ga)^r)
\]
on the variety $I_r^{(n)}$. Specifically, $\PGL(F)=\GL(F)/\units{F}$
acts by simultaneous conjugation (operation (1)), the notrivial element
of $S_2$ acts as simultaneous transposition (operation (2)), $(\Gm\ltimes \Ga)^r$ acts
by $(\alpha_i,\beta_i)_{i=1}^r\cdot(a_1,\dots,a_r)=
(\alpha_1 a_1+\beta_1I,\dots,\alpha_r a_r+\beta_r I)$ (operation (3)),
and $S_r$ acts by permuting the coordinates of the $r$-tuple. 
Theorem~\ref{TH:five-elem-irred-classification}
is equivalent to saying that when $n=3$, there is a surjective $G$-equivariant morphism
$(g, \alpha)\mapsto g s_\alpha : G\times \bbA^1\to I_5^{(3)}$,
where $s_\alpha$ is the $5$-tuple 
\[
	s_\alpha:=
\left(	
	\left[
	\begin{smallmatrix}
1 & 1 & 0 \\
0 & 0 & 0 \\
0 & 0 & 0 
\end{smallmatrix}
\right],
\left[
\begin{smallmatrix}
0 & 0 & 0 \\
1 & 1 & 0 \\
0 & 0 & 0 
\end{smallmatrix}
\right],
\left[
\begin{smallmatrix}
0 & 0 & 0 \\
0 & 1 & 1 \\
0 & 0 & 0 
\end{smallmatrix}
\right],
\left[
\begin{smallmatrix}
0 & 0 & 0 \\
0 & 0 & 0 \\
0 & 1 & 1 
\end{smallmatrix}
\right],
\left[
\begin{smallmatrix}
1 & 0 & 0 \\
0 & 0 & 0 \\ 
0 & 0 & \alpha
\end{smallmatrix}
\right]
	\right).
	\]
Using this and the fact that the morphism has finite fibers,
we show in Proposition~\ref{PR:dim-Ir} that $\dim I_5^{(3)}=19$.
We also show that $\dim I_3^{(2)}=9$, but we do not know what is $\dim I^{(n)}_{r}$
in general.

For comparison, the set $Z_r^{(n)}$ of \emph{non-generating} $r$-tuples in $\M(F)^r$ 
also has the structure of an algebraic variety and its
dimension is $rn^2-(r-1)(n-1)$  \cite[Prop.~7.1(b)]{First_2022_generators_of_alg_over_comm_ring}.
In particular,   $\dim Z_5^{(3)}=37$, which informally means that
irredundant $5$-element generating sets for $\M[3](F)$ are   more ``rare'' than $5$-element non-generating sets.

Our computation of $\dim I_5^{(3)}$ has some consequences to generation of \emph{Azumaya algebras}.
Recall that an algebra $A$ over a ring $R$ is called Azumaya of degree $n$
if $A$ is a finitely generated projective $R$-module, and for every
maximal ideal $\frakm$ of $R$, the $R/\frakm$-algebra $A(\frakm):=A/\frakm A$
is a central simple of dimension $n^2$; for other equivalent definitions
and further details, see \cite[III.5.1.1]{Knus_1991_quadratic_hermitian_forms} and \cite[Chp.~7]{Ford_2017_separable_algebras}. Suppose that $F$ is any infinite 
field and $R$ is a finitely generated (commutative) $F$-ring
of Krull-dimension $d$. In \cite[Thm.~1.5(a)]{First_2022_generators_of_alg_over_comm_ring},
the second author, Reichstein and Williams showed that every Azumay algebra of degree $n$
over $R$ can be generated by $\floor{\frac{d}{n-1}}+2$
elements.
In particular, when $d\leq 7$, every Azumaya algebra $B$ of degree $3$
over $R$ can be generated by $5$ elements. We use results
from \cite{First_2022_generators_of_alg_over_comm_ring} and our computation that $\dim I_5^{(3)}=19$
to slightly improve this and show that
under the same hypotheses, $B$ admits a   generating $5$-tuple
$(b_1,\dots,b_5)\in B^5$
that is \emph{locally redundant} in
the sense that  for every maximal ideal $\frakm\idealof R$,
the images of $b_1,\dots,b_5$ in $B/\frakm B$ redundantly generates it as an $R/\frakm$-algebra; see
Corollary~\ref{CR:local-redundant-3}.\footnote{
By \cite[Lems.~2.1, 2.2]{First_2017_number_of_generators}, a generating tuple $(b_1,\dots,b_r)\in B^r$ is locally
redundant if and only if  
it is redundant 
Zariski-locally, i.e., $\Spec R$ admits a Zariski open covering
$\{U_i\}_{i\in I}$ such that the restrictions of $b_1,\dots,b_r$ to each $U_i$
redundantly generate
$B|_{U_i}$.}
Computation of $\dim I_r^{(n)}$ for other values of $n$ and $r$ will lead to similar results about Azumaya
algebras of other degrees;
see Theorem~\ref{TH:dim-Ir-conseq}.

\begin{remark}
	Fix $n>1$. 
	For $m>2$, we do not know if there exists a degree-$n$ Azumaya algebra
	admitting an $m$-element generating set and no $m$-element locally redundant generating
	set.
	If such an example were to exist,
	then it would also be an example of an Azumaya algebra that can be generated by
	$m$ elements and no less (because we can turn any generating set with $m-1$
	elements into a [locally] redundant generating set by adding another element).
	However, while there are examples of degree-$n$ Azumaya algebras
	requiring arbitrarily many elements to generate
	\cite[Thm.~1.5(b)]{First_2022_generators_of_alg_over_comm_ring}, \cite{Gant_2024_speaces_of_gens_2_by_2},
	there is no particular $m>2$ for which it is known
	if there is an Azumaya algebra that can be generated by $m$ elements and no less.
\end{remark}

While Theorem~\ref{TH:five-elem-irred-classification} implies
Theorem~\ref{TH:intro-main} in the case $n=3$, its proof
takes an entirely different path, which leads to other  questions
and byproducts.
In order to explain it, we   introduce some terminology.

Let $G$ be a group acting on a set
$X$. We call a subset $Y\subseteq X$
\emph{$G$-dependent} if there is a proper subset
$Y_0\subsetneq Y$ such that any $g\in G$
fixing every element of $Y_0$
also fixes every element of $Y$; otherwise, we call
$Y$ \emph{$G$-independent}. Equivalently,
a subset $Y\subseteq X$ is $G$-independent
if  for every $y\in Y$, there is $g\in G$
fixing $Y-\{y\}$ element-wise and not fixing $y$.
The size of the largest-possible $G$-independent subset
of $X$ is denoted $\indep[G] X$, or $\indep  X$,
and  called the \emph{$G$-independence number} of $X$.
For example, if $G=\GL[n](F)$ and $X=F^n$,
then a subset $Y\subseteq F^n$ is $\GL[n](F)$-independent
if and only if it is linearly independent.

Suppose now that $F$ is algebraically
closed and consider the action of $\GL[n](F)$ on the set $\mathrm{Sub}(F^n)$
of subspaces of $F^n$. 
It is an easy consequence of Burnside's theorem on matrix algebras
that every irredundant generating set  $S$ for  $\M(F)$ gives
rise to a $\GL(F)$-independent subset of $\Sub(F^n)$
of the same cardinality; see Proposition~\ref{PR:indep-num-and-irred-gen}.
(Briefly,  Burside's theorem implies 
that for every $a\in S$, there is subspace $V_a\subseteq F^n$
that is invariant under all members of $S-\{a\}$ and 
but not under $a$. Consider the set of all the $V_a$.) 
Thus, every irredundant generating set of $\M(F)$
has at most $\indep[\GL(F)]\Sub(F^n)$ elements.
The example in \eqref{EQ:irred-gen-ex} now implies that  
$\indep[\GL(F)]\Sub(F^n)\geq 2n-1$ and equality would 
imply Theorem~\ref{TH:intro-main}.

We show in Theorem~\ref{TH:GL-indep-3} and Proposition~\ref{PR:GL-indep-2}
that when   $n\in\{2,3\}$ and $F$ is any field with more than $2$ elements, we indeed have 
\[\indep[\GL(F)]\Sub(F^n)=2n-1.\]  
Moreover, we  
classify all the $(2n-1)$-element $\GL(F)$-independent subsets in these cases.\footnote{
	In fact, it was  via   proving these results that  we   realized that
	there is a mistake in \cite{Laffey_1983_irred_gen_sets}, and thus set to fix it Theorem~\ref{TH:intro-main}.
}
Specifically, a $5$-element subset of $\Sub(F^3)$
is $\GL[3](F)$-independent if and only if it consists of two $1$-dimensional 
subspaces $L_1,L_2$, two $2$-dimensional subspaces $P_1,P_2$ with $L_i\subseteq P_i$
($i=1,2$) and another nontrivial subspace
in general position  with respect to $L_1,L_2,P_1,P_2$.
It is thanks to this structural result that we can prove Theorem~\ref{TH:five-elem-irred-classification}.

The proof Theorem~\ref{TH:GL-indep-3} relies on analysing configurations of lines and planes
in $F^3$. This case-by-case approach is difficult
to apply  for larger values $n$, so we do not know what is $\indep[\GL(F)]\Sub(F^n)$ for $n>3$.

\medskip

We finish by noting that   the computation of $\indep[{\GL[3](F)}]\Sub(F^3)$
has other implications.
First, recall that $ \Sub(F^3)-\{0,F^3\}$ is the set of vertices
of the \emph{spherical building} of $\GL[3](F)$ (equiv.\ $\mathrm{SL}_3(F)$
or $\PGL[3](F)$); see \cite[\S6.5]{Abramenko_2008_Buildings}.
This raises the question of what is the independence number of the action of
other reductive
algebraic groups acting on their spherical building? 
Our results show that the answer depends on   which   types of faces in the building are
considered.
Indeed, in our case of $\GL[3](F)$,
the type  of  $V\in\Sub(F^3)-\{0,F^3\}$ is its dimension as a vector space.
The classification of the largest $\GL[3](F)$-independent sets 
in $\Sub(F^3)$ shows that they include both $1$-dimensional and $2$-dimensional
subspaces, so the $\GL[3](F)$-independence number of the set of $1$-dimensional subspaces
of $F^3$ is at most $4$ (and not $5=\indep[{\GL[3](F)}]\Sub(F^3)$); in fact, it is exactly $4$
(Remark~\ref{RM:four-lines-indep}).

Second, for a general group $G$ acting on a set $X$,
the fact that $S\subseteq X$ is $G$-indepednent may be phrased in terms
of stabilizers, namely, $S$ is $G$-independent if and only if
for every
$x\in S$, we have $\Stab_G(x)\nsupseteq \bigcap_{y\in S-\{x\}}\Stab_G(y)$
(when $S-\{x\}=\emptyset$, the intersection is understood as $G$).
When $F$ is algebraically
closed, the stabilizers of members of $\Sub(F^3)-\{0,F^3\}$ are precisely
the maximal parabolic subgroups of $\GL[3](F)$. Taking that
as the definition of maximal parabolic subgroups 
when $F$ is general, our Theorem~\ref{TH:GL-indep-3} implies
the following group-theoretic result.

\begin{cor}
	Let $F$ be a  field with $|F|>2$ and let $P_1,\dots,P_t$ be maximal
	parabolic subgroups of  $\GL[3](F)$. Suppose
	that for every $i\in\{1,\dots,t\}$, we have $P_i\nsupseteq \bigcap_{j\neq i}P_j$.
	Then $t\leq 5$.
\end{cor}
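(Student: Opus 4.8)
The plan is to reinterpret the hypothesis as a $\GL[3](F)$-independence statement about subspaces of $F^3$ and then quote Theorem~\ref{TH:GL-indep-3}. Write $G=\GL[3](F)$. By the definition of maximal parabolic subgroup adopted just above the corollary, for each $i\in\{1,\dots,t\}$ we may fix $V_i\in\Sub(F^3)-\{0,F^3\}$ with $P_i=\Stab_G(V_i)$, and set $S:=\{V_1,\dots,V_t\}$.

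First I would record two elementary facts. (a) The $P_i$ are pairwise distinct: if $P_i=P_j$ with $i\neq j$, then $P_j$ is one of the terms in $\bigcap_{k\neq i}P_k$, so $\bigcap_{k\neq i}P_k\subseteq P_j=P_i$, contradicting the hypothesis $P_i\nsupseteq\bigcap_{k\neq i}P_k$. (b) The map $V\mapsto\Stab_G(V)$ is injective on $\Sub(F^3)-\{0,F^3\}$ --- equivalently, a maximal parabolic of $\GL[3](F)$ stabilizes a unique nontrivial proper subspace; given distinct such $V,W$, one exhibits some $g\in\Stab_G(V)\setminus\Stab_G(W)$ by a short computation in suitable coordinates. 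Combining (a) and (b), the $V_i$ are pairwise distinct, so $|S|=t$ (and, incidentally, $S$ contains neither $0$ nor $F^3$, since $\Stab_G(0)=\Stab_G(F^3)=G$ would contain every $\bigcap_{k\neq i}P_k$).

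Now I would apply the stabilizer reformulation of $G$-independence noted immediately before the corollary. Since the $V_i$ are distinct, $S-\{V_i\}=\{V_j:j\neq i\}$ and hence $\bigcap_{y\in S-\{V_i\}}\Stab_G(y)=\bigcap_{j\neq i}P_j$; thus the hypothesis says precisely that $\Stab_G(V_i)\nsupseteq\bigcap_{y\in S-\{V_i\}}\Stab_G(y)$ for every $i$, i.e., that $S$ is $G$-independent. Since $|F|>2$, Theorem~\ref{TH:GL-indep-3} gives $\indep[{\GL[3](F)}]\Sub(F^3)=2\cdot 3-1=5$, whence $t=|S|\leq 5$.

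There is no genuine obstacle here: all the mathematical content lies in Theorem~\ref{TH:GL-indep-3}, and the corollary is a dictionary translation between the language of maximal parabolic subgroups and that of their fixed subspaces. The only point that needs checking is the injectivity in (b), which is what guarantees that a list of $t$ maximal parabolics satisfying the irredundancy condition corresponds to a genuinely $t$-element $G$-independent set of subspaces rather than a smaller one.
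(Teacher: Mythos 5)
Your proof is correct and matches the approach the paper intends: translate the hypothesis into the statement that $S=\{V_1,\dots,V_t\}$ is a $\GL[3](F)$-independent subset of $\Sub(F^3)$ via the stabilizer reformulation given immediately before the corollary, then invoke Theorem~\ref{TH:GL-indep-3}(i). One small remark: your step~(b) is unnecessary. To conclude that the $V_i$ are pairwise distinct you only need the trivial direction, namely that $V_i=V_j$ forces $P_i=\Stab_G(V_i)=\Stab_G(V_j)=P_j$; contraposing, the distinctness of the $P_i$ established in~(a) already gives distinctness of the $V_i$. Injectivity of $V\mapsto\Stab_G(V)$ (i.e., the converse implication) is true but not used here, so your closing sentence identifies the wrong step as the crux; the real content is entirely in~(a) and in Theorem~\ref{TH:GL-indep-3}.
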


The paper is organized as follows.
Section~\ref{sec:prelim}   sets some general notation for the paper.
Theorem~\ref{TH:intro-main} is proved in Section~\ref{sec:irred-Mn}.
Section~\ref{sec:GL-indep} discusses $\GL(F)$-indepednent
sets in $\Sub(F^n)$ and their connection to irredundant generating
sets in $\M(F)$, and uses it to prove an analogue of Theorem~\ref{TH:five-elem-irred-classification}
for $2\times 2$ matrices. Theorem~\ref{TH:five-elem-irred-classification} is then
proved in Section~\ref{sec:three-by-three}.
Finally, in Section~\ref{sec:application} we compute the dimensions of $I^{(3)}_5$
and $I^{(2)}_3$ and derive some applications to locally redundant generation of Azumaya algebras.

\subsection*{Acknowledgments}

We are grateful to Omer Cantor for  comments about an earlier version of this manuscript.
The second author is supported by an ISF grant no.\ 721/24.

\section{Conventions and Preliminaries}
\label{sec:prelim}

Throughout this paper, $F$ is a field.

If not indicated otherwise,  rings are commutative and unital. 
Algebras are unital and associative by default, but are
not necessarily commutative. If $A$ is an $F$-algebra
and $S\subseteq A$, we say that $S$ generates $A$ if it generates $A$
as a unital algebra. The \emph{non-unital} algebra generated by $S$
will be denoted $\Trings{S}$.
An $F$-ring, resp.\ $F$-field, is an $F$-algebra which is a ring, resp.\ field.
The set of maximal ideals of a ring $R$ is denoted $\Max R$.

Recall the following known facts about generating sets.

\begin{prp}[{\cite[Lem.~1.1]{Laffey_1983_irred_gen_sets}} or {\cite[Lem.~2.2]{First_2022_generators_of_alg_over_comm_ring}}]
	\label{PR:unital-gen}
	Let $n>1$. A subset of $ \M(F)$ generates it as unital algebra if
	and only if it generates it as a non-unital algebra.
\end{prp}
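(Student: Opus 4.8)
The forward implication is immediate: if $S\subseteq\M(F)$ generates $\M(F)$ as a non-unital algebra, then it generates it as a unital algebra, since the unital subalgebra generated by $S$ contains the non-unital subalgebra $\Trings{S}=\M(F)$. So the plan is to prove the converse, and this is where the hypothesis $n>1$ enters.

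Suppose $S$ generates $\M(F)$ as a unital algebra, and put $A=\Trings{S}$. The first step is the elementary observation that the \emph{unital} subalgebra of $\M(F)$ generated by $S$ equals $A+F\cdot I$: this set contains $S$ and $I$, is an $F$-subspace, and is closed under multiplication because $(a+\alpha I)(b+\beta I)=ab+\alpha b+\beta a+\alpha\beta I\in A+FI$ whenever $a,b\in A$; conversely any unital subalgebra containing $S$ contains $A$ and $I$, hence contains $A+FI$. By hypothesis we therefore have $A+FI=\M(F)$.

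The key step is then to notice that this forces $A$ to be a two-sided ideal. Indeed, since $A$ is closed under multiplication, $\M(F)\cdot A=(A+FI)\cdot A=A\cdot A+FA\subseteq A$, and symmetrically $A\cdot\M(F)\subseteq A$. As $\M(F)$ is a simple ring, $A=0$ or $A=\M(F)$. The case $A=0$ would give $\M(F)=FI$, which is impossible for $n>1$; hence $A=\M(F)$, i.e.\ $S$ generates $\M(F)$ as a non-unital algebra, as desired.

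I do not expect any real obstacle here: the whole argument rests on the single observation that the identity $\M(F)=A+FI$ promotes the subalgebra $A$ to a two-sided ideal, after which simplicity of $\M(F)$ together with $n>1$ closes the proof. (One could alternatively cite the references given in the statement for an off-the-shelf proof, but the argument above is self-contained.)
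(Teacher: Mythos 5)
Your proof is correct, and it is the standard argument. The paper itself does not supply a proof but simply cites Laffey's Lem.~1.1 (or \cite[Lem.~2.2]{First_2022_generators_of_alg_over_comm_ring}); your self-contained derivation — that the unital subalgebra generated by $S$ is $\Trings{S}+FI$, that the identity $\Trings{S}+FI=\M(F)$ forces $\Trings{S}$ to be a two-sided ideal, and that simplicity of $\M(F)$ together with $n>1$ then gives $\Trings{S}=\M(F)$ — is precisely the argument used in those references, so you have reconstructed it rather than deviated from it.
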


\begin{prp}\label{PR:field-ext-gen}
	Let $A$ be an $F$-algebra,
	let $F'$ be an $F$-field,
	let $S\subseteq A$ and let
	$S'$ be the image of $S$ in $A':=A\otimes_F F'$.
	Then $S$ generates $A$ as an $F$-algebra if and only
	if $S'$ generates $A'$ as an $F'$-algebra.
\end{prp}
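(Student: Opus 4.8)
The plan is a standard faithfully-flat descent argument, exploiting that $F'$ is free (in particular, flat) as an $F$-module. Let $B$ denote the unital $F$-subalgebra of $A$ generated by $S$, so that $S$ generates $A$ as an $F$-algebra if and only if $B=A$. First I would observe that, since $F'$ is flat over $F$, the inclusion $B\embeds A$ induces an injection $B\otimes_F F'\embeds A\otimes_F F'=A'$, through which $B\otimes_F F'$ may be regarded as an $F'$-subalgebra of $A'$. Moreover, $B$ is spanned over $F$ by the finite products of elements of $S$ (the empty product being $1_A$), hence $B\otimes_F F'$ is spanned over $F'$ by the images of those products, which are exactly the finite products of elements of $S'$. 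Consequently $B\otimes_F F'$ is precisely the unital $F'$-subalgebra of $A'$ generated by $S'$, and therefore $S'$ generates $A'$ if and only if $B\otimes_F F'=A'$.

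With this identification in hand, the forward direction is immediate: if $B=A$, then $B\otimes_F F'=A\otimes_F F'=A'$. For the converse, suppose $B\otimes_F F'=A'$ and apply the exact functor $-\otimes_F F'$ to the short exact sequence of $F$-vector spaces $0\to B\to A\to A/B\to 0$; this yields an exact sequence $0\to B\otimes_F F'\to A\otimes_F F'\to (A/B)\otimes_F F'\to 0$, whence $(A/B)\otimes_F F'=0$. Since $F'$ is a nonzero $F$-vector space, it has a basis containing a nonzero vector, so for every $F$-vector space $M$ the map $m\mapsto m\otimes 1:M\to M\otimes_F F'$ is injective; applying this with $M=A/B$ gives $A/B=0$, i.e.\ $B=A$, as desired.

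I do not expect a genuine obstacle. The only point requiring a little care is the identification in the first paragraph: that $B\otimes_F F'$, a priori merely an abstract $F'$-algebra, injects into $A'$ with image the $F'$-subalgebra generated by $S'$. This is exactly where the flatness of $F'$ over $F$ enters, and it is what keeps the statement from being entirely formal.
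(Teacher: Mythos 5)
Your proof is correct and is essentially the same argument as the paper's: the paper works with the $F$-span of the set of monomials in $S$ and asserts the equivalence $\Span_F(M)=A \iff \Span_{F'}(M')=A'$, which is exactly the flatness/faithful-flatness step you spell out via $B\otimes_F F'\embeds A'$ and $(A/B)\otimes_F F'=0\Rightarrow A/B=0$. The only difference is that the paper leaves this descent step implicit while you write it out in full.
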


\begin{proof}[Proof (cf.\ {\cite[Lem.~1.2]{Laffey_1983_irred_gen_sets}})]
	Let $M$ be the the set of monomials in elements of $S$. The the image of $M$
	in $A'$, denoted $M'$, is the set of monomials in $S'$.
	Now,  $S$ generates $A$
	if and only if $\Span_F(M)=A$ if and only if $\Span_{F'}(M')=A'$
	if and only if $S'$ generates $A'$. 
\end{proof}

When there is no risk of confusion,
we abbreviate $\nMat{F}{n}$ to $\M$,
$\nMat{F}{m\times n}$ to $\M[m\times n]$ and $\nGL{F}{n}$
to $\GL$. 
Vectors in $F^n$ are viewed as column vectors and we freely
identify $\M$ with the $F$-algebra of $F$-endomorphisms of $F^n$.

The $n\times n$ identity matrix is denoted $I_n$, or just $I$.
The transpose of a matrix $a$ is   $a^\trans$.
When $n$ is clear from the context, we write $e_{i,j}$ for the $n\times n$ matrix with $1$ in the $(i,j)$
entry and zeroes elsewhere.
Given matrices $a=(\alpha_{ij})_{i,j}\in\M[m\times n]$
and $b\in \M[p\times q]$, their tensor product
is the the $mp\times nq$ matrix given in block form as follows
\[
a\otimes b
=\begin{bmatrix}
\alpha_{11} b & \cdots & \alpha_{1n} b \\
\vdots & & \vdots \\
\alpha_{m1} b & \cdots & \alpha_{mn}b
\end{bmatrix}.
\]

\section{Irredundant Generating Sets for $\nMat{F}{n}$; Proof of Theorem~\ref{TH:intro-main}}
\label{sec:irred-Mn}

In this section, we prove Theorem~\ref{TH:intro-main}.
We break the proof into two parts --- Proposition~\ref{PR:irred-example},
which says that $\M$ has an irredundant generating set with ${2n-1}$
elements when $n>1$, and Theorem~\ref{TH:irredundant-Mn},
which says that every irredundant generating set for $\M$
has at most $2n-1$ elements.

Recall that given a subset $S\subseteq \M$, we write $\Trings{S}$
for the non-unital algebra generated by $S$.
Given $k,m\in\N$, we further let
\[
U_{k,m}=\begin{bmatrix}
\M[k] & \M[k\times m] \\
0 & \M[m]
\end{bmatrix}
\qquad
\text{and}
\qquad
L_{k,m}=\begin{bmatrix}
\M[k] & 0 \\
\M[m\times k] & \M[m]
\end{bmatrix}.
\]
Both are (unital) subalgebras of $\M[k+m]$.

\begin{prp}\label{PR:irred-example}
Let $n>1$, and 
define
$a_1,\dots,a_{n-1},a'_1,\dots,a'_{n-1},b\in\M$
as in \eqref{EQ:irred-gen-ex}.
Then $S:=\{a_1,\dots,a_{n-1},a'_1,\dots,a'_{n-1},b\}$ is an irredundant
generating set for $\M$ consisting of $2n-1$ elements.
\end{prp}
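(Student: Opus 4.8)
The plan is to check, in order, that $|S| = 2n-1$, that $S$ generates $\M$, and that no proper subset of $S$ generates $\M$. The cardinality is immediate: $b = e_{1,1}$ has a single nonzero entry while each $a_i = e_{i,i}+e_{i,i+1}$ and each $a'_i = e_{i+1,i}+e_{i+1,i+1}$ has exactly two, and the supports of all $2n-1$ listed matrices are pairwise distinct.

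For generation, by Proposition~\ref{PR:unital-gen} it is enough to show that the non-unital algebra $\Trings{S}$ equals $\M$, and for this it suffices that $\Trings{S}$ contains every matrix unit $e_{i,j}$. One has $e_{1,1}=b\in\Trings{S}$, and I would proceed by induction: if $e_{j,j}\in\Trings{S}$ for some $1\le j\le n-1$, then $e_{j,j+1}=a_j-e_{j,j}$, then $e_{j+1,j}=a'_j\,e_{j,j}$ (using $e_{j+1,j}e_{j,j}=e_{j+1,j}$ and $e_{j+1,j+1}e_{j,j}=0$), and then $e_{j+1,j+1}=a'_j-e_{j+1,j}$ all lie in $\Trings{S}$. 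Letting $j$ run from $1$ to $n-1$ produces every diagonal unit $e_{i,i}$ together with every super- and sub-diagonal unit $e_{j,j+1}$, $e_{j+1,j}$; products of consecutive superdiagonal (resp.\ subdiagonal) units then give $e_{i,j}$ for $i<j$ (resp.\ $i>j$). Hence $\Trings{S}=\M$.

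For irredundancy, fix $s\in S$; again by Proposition~\ref{PR:unital-gen} it suffices to exhibit a proper non-unital subalgebra of $\M$ containing $S-\{s\}$. When $s=b$, put $v=\sum_{j=1}^n(-1)^{j-1}e_j\in F^n$; writing $v=\sum_j c_j e_j$ so that $c_{j+1}=-c_j$, a direct computation gives $a_i v=(c_i+c_{i+1})e_i=0$ and $a'_i v=(c_i+c_{i+1})e_{i+1}=0$ for every $i$, so $S-\{b\}$ lies in the left ideal $J:=\{x\in\M:xv=0\}$; this $J$ is a non-unital subalgebra, it is proper since $bv=e_1\neq 0$, and it does not contain $b$, so $\Trings{S-\{b\}}\subseteq J\subsetneq\M$. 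When $s=a_k$ with $1\le k\le n-1$, a quick inspection of entry positions shows that every matrix in $S-\{a_k\}$ has vanishing upper-right $k\times(n-k)$ block, i.e.\ lies in $L_{k,n-k}$, whereas $a_k$ does not (its $(k,k+1)$ entry sits in that block); since $L_{k,n-k}\subsetneq\M$, the subalgebra generated by $S-\{a_k\}$ is proper. The remaining case $s=a'_k$ follows from the previous one by transposition, which fixes $b$, interchanges $a_i$ with $a'_i$, carries $L_{k,n-k}$ to $U_{k,n-k}$, and preserves the property of generating $\M$; alternatively one checks directly that $S-\{a'_k\}\subseteq U_{k,n-k}\subsetneq\M$ and $a'_k\notin U_{k,n-k}$.

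I do not expect a genuine obstacle: every verification is elementary. The only point requiring a little care is settling on the right proper subalgebra in the irredundancy argument — the annihilator left ideal $J$ of the alternating vector $v$ when $s=b$, and the block-triangular subalgebras $L_{k,n-k}$, $U_{k,n-k}$ when $s$ is an $a_k$ or an $a'_k$ — after which the containments reduce to routine index bookkeeping.
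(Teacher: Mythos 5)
Your proposal is correct and follows essentially the same approach as the paper: the same induction on matrix units (via $e_{i,i+1}=a_i-e_{i,i}$, $e_{i+1,i}=a'_i e_{i,i}$, $e_{i+1,i+1}=a'_i-e_{i+1,i}$) for generation, and the same proper subalgebras $L_{k,n-k}$ and $U_{k,n-k}$ for irredundancy when dropping $a_k$ or $a'_k$. The one tiny difference is that for $s=b$ the paper uses the subalgebra of matrices having the alternating vector $v$ as an eigenvector, whereas you use the subalgebra of matrices annihilating $v$; since the relevant eigenvalue is $0$, these are the same observation.
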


\begin{proof}
	We first show that $\Trings{S}=\M$.
	To that end, observe that if $e_{i,i}\in\Trings{S}$
	for some $i\in \{1,\dots,n-1\}$,
	then   $ e_{i,i+1},e_{i+1,i},e_{i+1,i+1}\in\Trings{S}$,
	because $e_{i,i+1}=a_i-e_{i,i}$, $e_{i+1,i}=a'_i e_{i,i}$
	and $e_{i+1,i+1}=a'_i-e_{i+1,i}$.
	Since $e_{1,1}=b\in\Trings{S}$,
	it follows that $e_{i,j}\in \Trings{S}$
	whenever $|i-j|\leq 1$. The elements $e_{i,j}$ with $|i-j|\leq 1$ are well-known to generate
	$\M$, so $\Trings{S}=\M$.
	To see that $S$ is irredundant, observe that
	$S-\{a_i\}\subseteq L_{i,n-i}$,
	 $S-\{a'_i\}\subseteq U_{i,n-i}$,
	and $S-\{b\}$ is contained in the subalgebra of $\M$ 
	consisting of matrices having
	$(1,-1,1,-1,\dots)^\trans\in F^n$
	as a (right) eigenvector.
\end{proof}

\begin{remark}\label{RM:mistake}
	The main result of \cite{Laffey_1983_irred_gen_sets} states
	that for $n>2$, every irredundant generating set for $\M$
	has at most $2n-2$ elements, but
	Proposition~\ref{PR:irred-example} shows that this statement cannot be correct.
	
	Indeed, 
	there is a subtle error on the last paragraph of Section~3 in \cite{Laffey_1983_irred_gen_sets}, which deals with the case $n=3$.	
	In more detail, the author considers matrices $X_{11},Y_{11},X_1,X_2\in\M[2]$ such
	that $X_{11},Y_{11}$ are linearly independent, $X_1,X_2,I$
	are linearly independent and $\Trings{X_1,X_2,X_{12}Y_{21}}=\M[2]$
	for some nonzero $X_{12}\in\M[2\times 1]$ and $Y_{21}\in\M[1\times 2]$.
	It is claimed (via reduction to the case where $X_{11}\in FY_{11}$ that we could not
	understand) 
	that the $3\times 3$
	matrix 
	$[\begin{smallmatrix}
	X_{12}Y_{21} & 0 \\
	0 & 0 
	\end{smallmatrix}]$
	is in $A:=\Trings{[\begin{smallmatrix}
	X_1 & 0 \\
	0 & 0 
	\end{smallmatrix}],
	[\begin{smallmatrix}
	X_2 & 0 \\
	0 & 0 
	\end{smallmatrix}],
	[\begin{smallmatrix}
	X_{11} & X_{12} \\
	0 & 0 
	\end{smallmatrix}],
	[\begin{smallmatrix}
	Y_{11} & 0 \\
	Y_{21} & 0 
	\end{smallmatrix}],I}$.
	Unfortunately, this is not always true, e.g.,
	take $X_{11}=[\begin{smallmatrix}
	0 & 0 \\
	0 & 1 
	\end{smallmatrix}]$,
	$Y_{11}=[\begin{smallmatrix}
	1 & 0 \\
	0 & 1 
	\end{smallmatrix}]$,
	$X_1=[\begin{smallmatrix}
	1 & -1 \\
	0 & 0 
	\end{smallmatrix}]$,
	$X_2=[\begin{smallmatrix}
	0 & 0 \\
	-1 & 1 
	\end{smallmatrix}]$ ,
	$X_{12}=[\begin{smallmatrix}
	0 \\
	-1 
	\end{smallmatrix}]$
	and $Y_{21}=[\begin{smallmatrix}
	0 & 1 
	\end{smallmatrix}]$.
	One readily checks that every
	matrix in $A$ has $[\begin{smallmatrix} 1& 1&1\end{smallmatrix}]^\trans$
	as an eigenvector, but it is not an eigenvector
	of $[\begin{smallmatrix}
	X_{12}Y_{21} & 0 \\
	0 & 0 
	\end{smallmatrix}]$.

	Nevertheless, this is the only mistake in \cite{Laffey_1983_irred_gen_sets}
	that we are aware of, and as far as we can tell,
	all results in \cite{Laffey_1983_irred_gen_sets} which do not
	rely on the case $n=3$ are correct. 
	In fact, we use some of these results  in order to prove Theorem~\ref{TH:intro-main}.
\end{remark}

With Proposition~\ref{PR:irred-example} already proven,  Theorem~\ref{TH:intro-main}
reduces into proving:

\begin{thm}\label{TH:irredundant-Mn}
	Let $n\in\N$. Every irredundant generating set for $\M$ has $2n-1$ or less elements.
\end{thm}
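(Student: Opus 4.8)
The plan is to bound the size of an irredundant generating set $S$ for $\M$ by induction on $n$, the case $n=1$ being trivial since $\M[1]=F$ is generated by the empty set and any single scalar is redundant. So suppose $n>1$ and let $S=\{c_1,\dots,c_r\}$ be an irredundant generating set. The first move is the one suggested by the discussion after Theorem~\ref{TH:five-elem-irred-classification}: for each index $i$, irredundancy gives a proper subalgebra $A_i:=\Trings{S-\{c_i\}}\subsetneq \M$ (using Proposition~\ref{PR:unital-gen}, so we may work with non-unital algebras), and by Burnside's theorem on matrix algebras $A_i$ has a common invariant subspace, i.e.\ there is a nonzero proper subspace $V_i\subseteq F^n$ stabilized by every element of $S-\{c_i\}$ but \emph{not} by $c_i$ (if $c_i$ also stabilized $V_i$, then $S$ itself would stabilize $V_i$, contradicting $\Trings{S}=\M$). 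Passing to the algebraic closure via Proposition~\ref{PR:field-ext-gen} if necessary, we may assume $F$ is algebraically closed, which makes this dimension count cleaner.

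Next I would exploit a \emph{single} such subspace to do a reduction. Fix one $V:=V_r$ of dimension, say, $k$ with $0<k<n$, which is stabilized by $c_1,\dots,c_{r-1}$. After conjugating we may assume $V=\Span\{e_1,\dots,e_k\}$, so $c_1,\dots,c_{r-1}$ all lie in the block upper-triangular subalgebra $U_{k,m}$ ($m=n-k$) from the preliminaries. Consider the two algebra homomorphisms $\pi_1\colon U_{k,m}\to\M[k]$ and $\pi_2\colon U_{k,m}\to\M[m]$ extracting the diagonal blocks. The key claim is that the image $\{\pi_1(c_1),\dots,\pi_1(c_{r-1})\}$ must generate $\M[k]$ and likewise the $\pi_2$-images must generate $\M[m]$ --- otherwise, after adjoining $c_r$, the whole set $S$ would land inside a proper subalgebra of $\M$ of the form $\{\,x : x \text{ maps some subspace into another}\,\}$, contradicting $\Trings{S}=\M$; making this precise (that failure of surjectivity of $\pi_1$ or $\pi_2$ on the relevant subsets produces a common invariant subspace for all of $S$) is where Burnside's theorem is used a second time, essentially via the structure of the radical of $U_{k,m}$ and the fact that the off-diagonal block $\M[k\times m]$ is a $(\M[k],\M[m])$-bimodule that is simple on each side. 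From the generating (but possibly redundant) sets $\{\pi_j(c_i)\}_{i<r}$ one extracts irredundant subsets, of sizes $\le 2k-1$ and $\le 2m-1$ by the inductive hypothesis, and the combinatorial heart is to argue that these two subsets, together with the constraints coming from $c_r$ and the off-diagonal blocks, force $r-1\le (2k-1)+(2m-1)=2n-3$, whence $r\le 2n-2$ --- which is even stronger than needed, and is exactly the (false in general!) Laffey bound, so this naive argument must break down, and the correct accounting will instead yield $r\le 2n-1$, with the discrepancy traced to configurations where the same $c_i$ contributes to irredundancy in both blocks simultaneously.

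Here I should be honest: the clean inductive split above is too optimistic --- it is essentially Laffey's strategy, and the $(2n-1)$-element example of Proposition~\ref{PR:irred-example} shows some $c_i$ can be ``essential'' through a mechanism (an eigenvector condition rather than a block structure) not captured by a single $V_r$. So the real proof needs to consider the whole collection $\{V_1,\dots,V_r\}$ of invariant subspaces at once and run a more careful dimension count on the lattice they generate, tracking for each $c_i$ the minimal ``reason'' it fails to stabilize $V_i$ --- a flag-type or eigenvalue-type condition --- and showing these reasons can be packaged into a set of linear constraints on an auxiliary space of dimension $2n-2$, so that $r$, being the size of an independent family of such constraints modulo the trivial one, is at most $2n-1$. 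This reorganization is precisely the content of $\indep[\GL(F)]\Sub(F^n)\le 2n-1$ discussed in the introduction for $n\le 3$; for general $n$ the bound $r\le 2n-1$ must be extractable from Laffey's surviving results (those not relying on the erroneous $n=3$ case) combined with the example, and assembling exactly which of his lemmas suffice --- that is the main obstacle I anticipate.

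\textbf{Summary of steps.} (1) Reduce to $F$ algebraically closed and to non-unital generation. (2) For each $c_i$, produce via Burnside a proper nonzero $V_i$ invariant under $S-\{c_i\}$ but not $c_i$. (3) Fix one $V_i$, put $S-\{c_i\}$ in block-triangular form, and show the diagonal-block projections of $S-\{c_i\}$ generate the smaller matrix algebras. (4) Apply induction to the projected generating sets, extracting irredundant subsets of controlled size. (5) Do the combinatorial bookkeeping --- necessarily more subtle than a naive sum, because of the eigenvector-type essentiality visible in \eqref{EQ:irred-gen-ex} --- to conclude $r\le 2n-1$, using whatever portions of Laffey's analysis survive.
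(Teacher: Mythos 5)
Your proposal is not a proof but a sketch of two candidate strategies, both of which you yourself flag as incomplete, and the gap you leave open is exactly the one the paper's argument has to bridge. Your first strategy — fix a single invariant subspace $V_r$, block-triangularize $S-\{c_r\}$, project to the two diagonal blocks, and add up inductive bounds — is, as you note, essentially Laffey's approach and lands on $r\le 2n-2$, which the example in Proposition~\ref{PR:irred-example} refutes; there is no ``correct accounting'' patch to this single-subspace split that yields $2n-1$, because the failure is structural (one of the essential constraints comes from an eigenvector condition invisible to any one block decomposition). Your second strategy — treat the whole family $\{V_1,\dots,V_r\}$ and prove $\indep_{\GL(F)}\Sub(F^n)\le 2n-1$ — is explicitly an open problem in the paper for $n>3$ (see the remark after the discussion of Theorem~\ref{TH:GL-indep-3}: ``we do not know what is $\indep_{\GL(F)}\Sub(F^n)$ for $n>3$''), so it cannot carry the general-$n$ theorem. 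Finally, ``the bound must be extractable from Laffey's surviving results'' is not an argument; you would need to say which results and how.

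What the paper actually does is different from both of your routes. It does not start from a single $V_i$. Instead, it uses Laffey's Theorem~4.1 (Theorem~\ref{TH:Laffey-triang}): when $|S|>n+1$, some proper subset $T\subsetneq S$ is already non-triangularizable. It then applies Wedderburn's Principal Theorem to $B=\Trings{T\cup\{I\}}$ to split off a semisimple part $A\cong\M[r_1]\times\cdots\times\M[r_t]$ embedded in block form with multiplicities $k_i$, and splits into two cases. If $k_1=1$, a new completion lemma (Corollary~\ref{CR:completion}, built from Lemmas~\ref{LM:col-inc} and~\ref{LM:row-inc}, which say you can enlarge an $\M[p\times q]$-block one row or column at a time using a single extra generator) shows that $\M[r_1]\cup S_0$ generates $\M$ for some $S_0\subseteq S$ with $|S_0|\le 2n-2r_1$, and irredundancy forces $S=S_0\cup T$ with $|T|\le 2r_1-1$ by the inductive hypothesis applied to the quotient $\M[r_1]$. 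If $k_1>1$, Laffey's Theorem~5.1 (Theorem~\ref{TH:Laffey-ind}) is used to transfer the generation problem to a strictly smaller matrix algebra $\M[g]$ with $g=k_1+\cdots+k_t<n$, and induction plus the inequality $g\le n-2r_1+2$ closes the count. None of these moves — the non-triangularizable proper subset, the Wedderburn decomposition, the row/column completion lemma, the $\M[g]$ reduction — appears in your proposal, and they are precisely what replaces the broken ``sum the two block bounds'' bookkeeping you correctly distrusted.
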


We prove the theorem using some lemmas and two results of Laffey \cite{Laffey_1983_irred_gen_sets}.
In what follows,
we shall think of $\M[p\times q]$ ($p,q\leq n$) as  a non-unital subalgebra
of $\M$ via identifying $a\in \M[p\times q]$ with the block matrix
$[\begin{smallmatrix} a & 0 \\ 0 & 0\end{smallmatrix}]\in\M$.
Given $S\subseteq \M$
and $x\in \GL$,
we write  $xSx^{-1}$ for $\{xax^{-1}\where a\in S\}$
and $S^\trans$ for $\{a^\trans\where a\in S\}$.
Observe that $\Trings{S^\trans}=\Trings{S}^\trans$
and $\Trings{xSx^{-1}}=x\Trings{S}x^{-1}$.

\begin{lem}\label{LM:col-inc}
Let $1\leq p\leq q< n$ be integers and let $a\in \M$.
Suppose that $a\notin L_{q,n-q}$.
Then there exists $x\in  \GL $ such that $x\Trings{\M[p\times q]\cup \{a\}}x^{-1}\supseteq
\M[p\times(q+1)]$ and $x\M[p\times q]x^{-1}=\M[p\times q]$.
\end{lem}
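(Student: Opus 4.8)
The plan is to exploit the hypothesis $a \notin L_{q,n-q}$, which means exactly that the block decomposition of $a$ with respect to the splitting $F^n = F^q \oplus F^{n-q}$ has a nonzero upper-right block; that is, writing columns $q+1, \dots, n$ of $a$, at least one of them has a nonzero entry in one of the first $q$ rows. Equivalently, there is a standard basis vector $e_j$ with $j > q$ such that $ae_j \notin F^q := \Span\{e_1,\dots,e_q\}$, but we only care about the projection of $ae_j$ onto $F^q$: let $v \in F^q$ be the image of $a e_j$ under the projection $F^n \to F^q$ killing $e_{q+1},\dots,e_n$, and we know $v \neq 0$ for a suitable choice of $j$. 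The first step is to conjugate by an $x \in \GL$ that preserves $\M[p \times q]$ and is chosen so that, after conjugation, the relevant column/row data is in a normal form convenient for the next step. A natural choice: take $x$ to be block-upper-triangular of the form $\left[\begin{smallmatrix} x_1 & 0 \\ 0 & x_2 \end{smallmatrix}\right]$ with $x_1 \in \GL[q]$ and $x_2 \in \GL[n-q]$; such $x$ satisfies $x \M[p\times q] x^{-1} = \M[p \times q]$ automatically (since $\M[p\times q]$, viewed inside $\M$, consists of matrices supported in the top-left $q\times q$ block, in fact in its first $p$ rows, and conjugation by such $x$ maps $\M[p\times q]$ to $x_1 \M[p\times q] x_1^{-1}$; but this is not quite $\M[p\times q]$ unless $x_1$ preserves the row structure — so more care is needed, see below). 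After normalizing, one arranges that the $(1, q+1)$ entry of $x a x^{-1}$ is nonzero while keeping $x \M[p\times q] x^{-1} = \M[p\times q]$.

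The second and main step is to compute inside $\Trings{\M[p\times q] \cup \{a\}}$ (or its conjugate) and produce all of $\M[p\times(q+1)]$. The key observation is that $\M[p\times q]$ already contains $e_{i,k}$ for $1 \le i \le p$, $1 \le k \le q$, and products $e_{i,k} \cdot e_{k,l}$ stay inside $\M[p\times q]$; to escape into column $q+1$ we must multiply by $a$ on the right. Concretely, $e_{i,k} \, a$ has its $(i, q+1)$ entry equal to the $(k, q+1)$ entry of $a$; choosing $k$ so that this entry is nonzero (possible after Step 1), and multiplying further by suitable $e_{i',i} \in \M[p\times q]$ on the left, we obtain a scalar multiple of $e_{i', q+1}$ for every $1 \le i' \le p$. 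Combined with the $e_{i',k}$ ($k \le q$) already present, and closing under the non-unital algebra operations, this yields all of $\M[p\times(q+1)]$ — note $\M[p\times(q+1)] \cdot \M[p\times(q+1)] \subseteq \M[p\times(q+1)]$ is false in general, but the relevant products $e_{i,k} e_{k',l}$ with $l \le q+1$, $k,k' \le q$ do land in $\M[p\times(q+1)]$, so one checks the generated non-unital algebra is exactly the span of $\{e_{i,l} : 1 \le i \le p, \ 1 \le l \le q+1\}$ once we have all these matrices individually.

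The delicate point — and the one I expect to be the main obstacle — is making Step 1 precise: I want an $x$ that \emph{simultaneously} normalizes $a$ as needed and satisfies $x\,\M[p\times q]\,x^{-1} = \M[p\times q]$ exactly (not just up to the ambient $q\times q$ block). Since $\M[p\times q]$ inside $\M$ is $\Span\{e_{i,k} : i \le p, \ k \le q\}$, the condition $x\,\M[p\times q]\,x^{-1} = \M[p\times q]$ forces $x$ to preserve both $\Span\{e_1,\dots,e_p\}$-related row structure and the column space $F^q$; working it out, $x$ must be of the form $\left[\begin{smallmatrix} x_1 & 0 \\ * & x_3 \end{smallmatrix}\right]$ with $x_1 \in \GL[q]$ block-upper-triangular for the $p \mid q$ split — roughly, $x_1 = \left[\begin{smallmatrix} y_1 & 0 \\ y_2 & y_3 \end{smallmatrix}\right]$ preserving $F^p \subseteq F^q$ is needed for $x_1 \M[p\times q] = \M[p\times q]$ as a left module, and actual equality $x_1 \M[p\times q] x_1^{-1} = \M[p\times q]$ as a space pins this down further. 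I would handle this by taking $x$ to be a permutation-like matrix together with elementary column operations among columns $q+1,\dots,n$ and $1,\dots,q$ that is easily seen to preserve $\M[p\times q]$; since $a \notin L_{q,n-q}$ is invariant under such moves only up to relabeling, a short case analysis (or an explicit choice using the nonzero entry's position) finishes it. Once the normal form is in hand, Step 2 is a routine matrix-unit computation.
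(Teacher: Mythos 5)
Your overall strategy is close to the paper's, and you correctly identify the right hypothesis (nonzero upper-right block) and the right target (produce matrix units $e_{i',q+1}$ inside the generated non-unital algebra). But there is a genuine gap in Step~2 that Step~1, as you describe it, does not fix.

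The problem: having arranged that $(xax^{-1})_{k,q+1}\neq 0$ for some $k\le q$ does \emph{not} mean that $e_{i',k}(xax^{-1})$ is a scalar multiple of $e_{i',q+1}$. That product has its entire $i'$-th row equal to the $k$-th row of $xax^{-1}$, which in general has other nonzero entries. Entries in columns $1,\dots,q$ can be removed by subtracting an element of $\M[p\times q]$ (this works because $p\ge 1$, so the corresponding correction lies in $\M[p\times q]\subseteq\M[p\times(q+1)]$). But entries in columns $q+2,\dots,n$ \emph{cannot} be removed this way — they lie outside $\M[p\times(q+1)]$ and are not available for subtraction — so they must be killed by the conjugation itself. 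A permutation that merely moves a nonzero entry into column $q+1$ does not do this. You need $x=I_q\oplus Q^{-1}$ with $Q\in\GL[n-q]$ chosen so that the relevant length-$(n-q)$ row vector becomes $(1,0,\dots,0)$, i.e.\ a genuine change of basis among coordinates $q+1,\dots,n$, not just a relabeling.

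Relatedly, you expend effort worrying whether a general block-diagonal $x$ with $x_1\in\GL[q]$ preserves $\M[p\times q]$, and you leave the resolution vague ("a short case analysis ... finishes it"). The clean way out, which the paper uses, is to do all the row-$1$ normalization by \emph{algebra operations} rather than conjugation: first replace $a$ by $e_{1,i}a\in\Trings{\M[p\times q]\cup\{a\}}$ to get a matrix supported in row~$1$, then subtract the $\M[p\times q]$-part of row~$1$. Only after these two steps conjugate, and take $x=I_q\oplus Q^{-1}$, which preserves $\M[p\times q]$ trivially because it is the identity on the first $q$ coordinates. That conjugation turns the cleaned-up $a$ into exactly $e_{1,q+1}$, and then $e_{i',q+1}=e_{i',1}e_{1,q+1}$ finishes Step~2 with no further subtraction or case analysis. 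In short: the missing idea is to first flatten $a$ to a single row by multiplication, and to reserve the conjugation (with $x$ acting only on the last $n-q$ coordinates) for killing the stray entries in columns $q+2,\dots,n$, which subtraction cannot reach.
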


\begin{proof}
There are $i,j\in \N$
such that $1\leq i\leq q<j\leq n$ and $a_{ij}\neq 0$.
Since $e_{1,i}\in \M[p\times q]$, we may replace $a$ with $e_{1,i}a$
to assume that  $i=1$, i.e.\ $a_{1j}\neq 0$, and $a$ has the form 
\[
a=
\begin{bmatrix}
u & v \\
0_{(n-1)\times q} & 0_{(n-1)\times (n-q)}
\end{bmatrix} 
\]
with $u\in\M[1\times q]$ and $v\in \M[1\times(n-q)]$.
Replacing $a$ with 
$a-[\begin{smallmatrix}
u & 0  \\
0  & 0 
\end{smallmatrix}]$, 
we may further assume that
$u=0$. 

Since $v\neq 0$, there is $Q\in \GL[n-q]$ such that
$vQ =[\begin{smallmatrix}
1 & 0 & \cdots & 0
\end{smallmatrix}]$. Put $x=I_q\oplus Q^{-1}$.
It is straightforward to check that $xax^{-1}=[\begin{smallmatrix}
0 & vQ \\
0 & 0
\end{smallmatrix}]=e_{1,q+1}$
and  
$x\M[p\times q]x^{-1}=\M[p\times q]$.
Now, for every $1\leq i\leq p$, we have $e_{i,q+1}=e_{i,1}e_{1,q+1}\in \Trings{\M[p\times q]\cup \{ xax^{-1}}\}$, so 
\[\M[p\times (q+1)]\subseteq
\Trings{\M[p\times q]\cup \{xax^{-1}\}}=
\Trings{x(\M[p\times q] \cup \{ a \})x^{-1}}
=x\Trings{\M[p\times q]\cup \{ a\}}x^{-1}.
\qedhere\]
\end{proof}

\begin{lem}\label{LM:row-inc}
Let $1\leq p< n$ be integers and let $a\in \M$.
Suppose that $a\notin U_{p,n-p}$.
Then there exists $x\in  \GL $ such that $x\Trings{\M[p\times n]\cup \{a\}}x^{-1}\supseteq
\M[(p+1)\times n]$  and $x\M[p\times n]x^{-1}=\M[p\times n]$. 
\end{lem}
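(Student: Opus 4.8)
The plan is to mimic the proof of Lemma~\ref{LM:col-inc}, transposing the roles of rows and columns. Lemma~\ref{LM:row-inc} is essentially the ``dual'' statement: there we want to grow $\M[p\times n]$ (a full set of $p$ rows) into $\M[(p+1)\times n]$ (one more row), using a matrix $a$ that fails to be block-lower-triangular in the sense $a\notin U_{p,n-p}$, i.e.\ $a$ has a nonzero entry in the lower-left block with column index $\le p$ and row index $> p$. The cleanest route is to deduce it directly from Lemma~\ref{LM:col-inc} by the transpose trick that the authors have already set up, namely the identities $\Trings{S^\trans}=\Trings{S}^\trans$ and $\Trings{xSx^{-1}}=x\Trings{S}x^{-1}$.

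First I would observe that $a\notin U_{p,n-p}$ is equivalent to $a^\trans\notin L_{p,n-p}$, and that $(\M[p\times n])^\trans=\M[n\times p]$. So I would like to apply Lemma~\ref{LM:col-inc} with its ``$q$'' equal to $p$ and its ``$p$'' equal to $n$ (the lemma needs $1\le p\le q<n$ in its own notation; here we would be in the boundary-ish case, but note Lemma~\ref{LM:col-inc} as stated requires $q<n$, which holds since our $p<n$, and requires its first parameter $\le$ its second, i.e.\ $n\le p$, which fails). So a verbatim transpose of Lemma~\ref{LM:col-inc} does not quite apply, and it is cleaner to just reprove it by the same argument rather than force the reduction. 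Therefore the main body of the proof will be a row-version of the computation in Lemma~\ref{LM:col-inc}.

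Concretely: since $a\notin U_{p,n-p}$ there are indices $1\le j\le p<i\le n$ with $a_{ij}\ne 0$. Multiplying $a$ on the right by $e_{j,1}\in\M[p\times n]$ (which lies in the algebra we are generating) lets me assume $j=1$, so the first column of $a$ has a nonzero entry in a row indexed $>p$; subtracting an appropriate element of $\M[p\times n]$ (to kill the top $p$ entries of that first column) I may assume $a$ is supported in its first column with entries only in rows $p+1,\dots,n$, say $a=\big[\begin{smallmatrix} 0 & 0\\ w & 0\end{smallmatrix}\big]$ with $0\ne w\in\M[(n-p)\times 1]$. Then I pick $Q\in\GL[n-p]$ with $Qw=e_1$ (first standard basis vector of $F^{n-p}$) and conjugate by $x=I_p\oplus Q$: this fixes $\M[p\times n]$ setwise (it only touches the last $n-p$ rows and columns, and $\M[p\times n]$ as embedded has all action concentrated in the top $p$ rows) and sends $a$ to $e_{p+1,1}$. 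Finally $e_{p+1,k}=e_{p+1,1}e_{1,k}\in\Trings{\M[p\times n]\cup\{xax^{-1}\}}$ for all $1\le k\le n$, giving $\M[(p+1)\times n]\subseteq \Trings{x(\M[p\times n]\cup\{a\})x^{-1}}=x\Trings{\M[p\times n]\cup\{a\}}x^{-1}$, and $x\M[p\times n]x^{-1}=\M[p\times n]$, as required.

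The one point deserving care — and the closest thing to an obstacle — is the bookkeeping of which block a conjugation by $x=I_p\oplus Q$ fixes: one must check that the embedded copy $\big\{\big[\begin{smallmatrix} b & 0\\ 0 & 0\end{smallmatrix}\big]:b\in\M[p\times n]\big\}$ is genuinely stable under $x(\cdot)x^{-1}$, which holds because conjugation by $I_p\oplus Q$ acts as $b\mapsto b(I_p\oplus Q^{-1})$ on the top $p$ rows and leaves the bottom rows zero; since $\M[p\times n]$ is the full space of matrices supported in the top $p$ rows, it is preserved. Everything else is the routine matrix computation parallel to Lemma~\ref{LM:col-inc}, so I would present it tersely, perhaps even just saying ``by a symmetric argument to Lemma~\ref{LM:col-inc}, applied to transposes.''
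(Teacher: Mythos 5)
Your proof is correct and follows essentially the same route as the paper's: reduce to $j=1$ by right-multiplying with $e_{j,1}$, clear the top $p$ entries of the first column by subtracting an element of $\M[p\times n]$, then conjugate by $x=I_p\oplus Q$ with $Qv=e_1$ to land on $e_{p+1,1}$, and finish by multiplying $e_{p+1,1}e_{1,k}$. Your preliminary observation that a literal transpose reduction to Lemma~\ref{LM:col-inc} fails the parameter constraint $p\le q$ is also a fair point and matches the paper's choice to give a parallel direct argument.
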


\begin{proof}
There are $i,j\in \N$ such that $j\leq p<i\leq n$ such that $a_{ij}\neq 0$.
Since $e_{j,1}\in \M[p\times n]$, we may replace $a$ with $ae_{j,1}$ 
to assume that $j=1$, i.e., $a_{i1}\neq 0$,
and that $a$ has the form
\[
a=
\begin{bmatrix}
u & 0_{p\times (n-1)} \\
v & 0_{(n-p)\times (n-1)}
\end{bmatrix} 
\]
with $u\in\M[p\times 1]$, $v\in \M[(n-p)\times 1]$.
Replacing $a$ with
$a-[\begin{smallmatrix}
u & 0  \\
0  & 0 
\end{smallmatrix} ]$,
we may further assume that $u=0$.

Since $v\neq 0$, there is $P\in \GL[n-p]$ such that $Pv=[\begin{smallmatrix}
1 & 0 & \cdots & 0
\end{smallmatrix}]^\trans$. Put $x=I_p\oplus P$.
It is routine to check that
$xax^{-1}=[\begin{smallmatrix}
0 & 0 \\
Pv & 0
\end{smallmatrix} ]=e_{p+1,1}$
and   $x\M[p\times n]x^{-1}=\M[p\times n]$.
Now, for every $1\leq j\leq n$, we have
$e_{p+1,j}=e_{p+1,1}e_{1,j}\in \Trings{\M[p\times n]\cup \{xax^{-1}\}}$,
and as in the proof
of Lemma~\ref{LM:col-inc}, it follows that
$\M[(p+1)\times n]\subseteq  \Trings{\M[p\times n]\cup\{xax^{-1}\}}=x\Trings{\M[p\times n]\cup\{a\}}x^{-1}$.
\end{proof}

\begin{cor}\label{CR:completion}
Let $p,q\in \{1,\dots,n\}$
and let $S$ be a subset of $\M$
such that $\Trings{\M[p\times q]\cup S}=\M$.
Then there is $T\subseteq S$
with $|T|\leq 2n-p-q$ such that 
$\Trings{\M[p\times q]\cup T}=\M$.
\end{cor}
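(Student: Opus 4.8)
The plan is to induct on $N := 2n - p - q \in \{0,1,2,\dots\}$, trading one step of ``box growth'' for one element of $S$ at each stage. First I would reduce to the case $p\le q$: since $2n-p-q$ is symmetric in $p$ and $q$, and since $(\M[p\times q])^\trans=\M[q\times p]$ and $\trings{X^\trans}=\trings{X}^\trans$, replacing $(p,q,S)$ by $(q,p,S^\trans)$ and transposing the answer back at the end lets us assume $p\le q$. The base case $N=0$ forces $p=q=n$, so $\M[p\times q]=\M$ and $T=\emptyset$ works.

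For the inductive step, suppose $N>0$ and $p\le q$; then either $q<n$, or $q=n$ and $p<n$. I will spell out the first case, the second being identical with Lemma~\ref{LM:row-inc} and $U_{p,n-p}$ in place of Lemma~\ref{LM:col-inc} and $L_{q,n-q}$. Since the columns of $\M[p\times q]$ are confined to the first $q$, we have $\M[p\times q]\subseteq L_{q,n-q}$, and $L_{q,n-q}\subsetneq\M$ because $q<n$; hence the hypothesis $\trings{\M[p\times q]\cup S}=\M$ forces some $a\in S$ to lie outside $L_{q,n-q}$. Apply Lemma~\ref{LM:col-inc} to obtain $x\in\GL$ with $\M[p\times(q+1)]\subseteq x\trings{\M[p\times q]\cup\{a\}}x^{-1}=\trings{\M[p\times q]\cup\{xax^{-1}\}}$ and $x\M[p\times q]x^{-1}=\M[p\times q]$. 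Conjugating the hypothesis by $x$ and using the latter equality gives $\trings{\M[p\times q]\cup xSx^{-1}}=\M$; and since $\M[p\times q]\subseteq\M[p\times(q+1)]$ while $\M[p\times(q+1)]\subseteq\trings{\M[p\times q]\cup\{xax^{-1}\}}\subseteq\trings{\M[p\times q]\cup xSx^{-1}}$, we also get $\trings{\M[p\times(q+1)]\cup xSx^{-1}}=\M$.

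Now $2n-p-(q+1)=N-1$, so the induction hypothesis (applied to the box $\M[p\times(q+1)]$, which still satisfies $p\le q+1$) produces $T'\subseteq xSx^{-1}$ with $|T'|\le N-1$ and $\trings{\M[p\times(q+1)]\cup T'}=\M$. Put $\hat T:=x^{-1}T'x\subseteq S$. Using $\M[p\times(q+1)]\subseteq\trings{\M[p\times q]\cup\{xax^{-1}\}}$ once more,
\[
\M=\trings{\M[p\times(q+1)]\cup x\hat Tx^{-1}}\subseteq\trings{\M[p\times q]\cup x(\{a\}\cup\hat T)x^{-1}},
\]
and conjugating back by $x^{-1}$ (again via $x^{-1}\M[p\times q]x=\M[p\times q]$) gives $\trings{\M[p\times q]\cup(\{a\}\cup\hat T)}=\M$. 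Hence $T:=\{a\}\cup\hat T\subseteq S$ satisfies $|T|\le 1+(N-1)=2n-p-q$, closing the induction.

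The one place needing genuine care is the conjugation built into Lemmas~\ref{LM:col-inc} and~\ref{LM:row-inc}: the element $xax^{-1}$ is in general \emph{not} a member of $\M[p\times(q+1)]$, so one cannot simply ``absorb $a$ into the enlarged box and delete it''. The resolution is the three‑move dance above — conjugate the whole configuration by $x$ (harmless, since $\M[p\times q]$ is $x$‑invariant), recurse on the larger box, then transport the recursive solution back through the inclusion $\M[p\times(q+1)]\subseteq\trings{\M[p\times q]\cup\{xax^{-1}\}}$ and conjugate by $x^{-1}$, at which stage $a$ itself (not $xax^{-1}$) reappears as the single extra generator. Everything else is routine bookkeeping with the identities $\trings{ySy^{-1}}=y\trings{S}y^{-1}$ and $\trings{S^\trans}=\trings{S}^\trans$.
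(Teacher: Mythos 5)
Your proof is correct and follows essentially the same approach as the paper's: induction on $2n-p-q$, reduction to $p\le q$ via transposition, case split on whether $q<n$, an application of Lemma~\ref{LM:col-inc} (resp.\ \ref{LM:row-inc}) to an element $a\in S$ outside $L_{q,n-q}$ (resp.\ $U_{p,n-p}$), and the conjugate-recurse-conjugate-back maneuver. The explicit remark about why $xax^{-1}$ cannot simply be absorbed into the enlarged box is a helpful clarification of a point the paper handles implicitly.
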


\begin{proof}
We argue by induction on $k:=(2n-p-q)$. The case $k=0$
is clear since $p=q=n$ and we can take $T=\emptyset$.
Suppose now that $k>0$ and the theorem is known to hold when $2n-p-q<k$.

Since $\Trings{\M[q\times p]\cup S^\trans}=\M$ if and only
if $\Trings{\M[p\times q]\cup S} =\M$,
the corollary holds for $\M[p\times q]$ and $S$ 
if and only if it holds for $\M[q\times p]$ and $S^\trans$. It is therefore enough
to consider the case where $p\leq q$.

Suppose first that $q<n$. Since 
$\Trings{\M[p\times q]\cup S}\nsubseteq
L_{q,n-q}$
and $\M[p\times q]\subseteq L_{q,n-q}$,
there is some $a\in S-L_{q,n-q}$.
By Lemma~\ref{LM:col-inc}, there is $x\in\GL$ such
that 
$\M[p\times (q+1)]\subseteq x\Trings{\M[p\times q]\cup \{a\}}x^{-1}$
and $x\M[p\times q]x^{-1}=\M[p\times q]$.
Observe that
\begin{align*}
\Trings{\M[p\times (q+1)]\cup xSx^{-1}} & \supseteq 
\Trings{ \M[p\times q] \cup xSx^{-1}}
\\
&=
\Trings{x\M[p\times q]x^{-1}\cup xSx^{-1}}=x\Trings{\M[p\times q]\cup S}x^{-1}=\M
.
\end{align*}
Thus, by the induction hypothesis, there is $R\subseteq xSx^{-1}$
with $|R|\leq 2n-p-q-1$ and such that
$\Trings{\M[p\times (q+1)],R}=\M$.
Put $T=\{a\}\cup x^{-1}Rx$. Then $T$ is a subset of $S$ with at most $2n-p-q$
elements and
\begin{align*}
\Trings{\M[p\times q]\cup T}
&=\Trings{\M[p\times q]\cup \{a\}\cup x^{-1}Rx}
\\
&\supseteq \Trings{x^{-1}\M[p\times (q+1)] x\cup x^{-1}R x}=
x^{-1}\Trings{\M[p\times(q+1)]\cup R}x=\M,
\end{align*}
so $T$ is the required subset of $S$.

Now suppose that $q=n$. Then $p<n$.
Since $\Trings{\M[p\times n]\cup S}\nsubseteq
U_{p,n-p}$
and $\M[p\times q]\subseteq U_{p,n-p}$,
there is some $a\in S-U_{p,n-p}$.
By Lemma~\ref{LM:row-inc}, there is $x\in\GL$ such
that 
$\M[(p+1)\times n]\subseteq x\Trings{\M[p\times n]\cup \{a\} }x^{-1}$
and $x\M[p\times n]x^{-1}=\M[p\times n]$.
Now, arguing as in the case $q<n$, we see that there
is $R\subseteq xSx^{-1}$
with $|R|\leq 2n-p-q-1$
and $\Trings{\M[(p+1)\times n]\cup R }=\M$,
and the set $T=\{a\}\cup x^{-1}R x$
satisfies all the requirements.
\end{proof}

We now recall two results from \cite{Laffey_1983_irred_gen_sets} that we will need.
Their proofs are unaffected by the mistake in  
\cite[\S3]{Laffey_1983_irred_gen_sets}, see
Remark~\ref{RM:mistake}.

\begin{thm}[{Laffey \cite[Thm.~4.1]{Laffey_1983_irred_gen_sets}}]
	\label{TH:Laffey-triang}
	Let $S$ be a irredundant generating set for $\M$ with $n>1$.
	If $|S|>n+1$, then $S$ contains a proper subset that is not triangularizable.
\end{thm}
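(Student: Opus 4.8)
The plan is to prove the contrapositive: if $S$ is an irredundant generating set for $\M$ with $n>1$ and \emph{every} proper subset of $S$ is simultaneously triangularizable, then $|S|\le n+1$. (Granting this, whenever $|S|>n+1$ the set $S$ must contain a non-triangularizable proper subset.) It suffices to treat the case $F=\overline F$: if all proper subsets of $S$ are triangularizable over $F$, the same holds over $\overline F$, while $S$ stays an irredundant generating set over $\overline F$ by Proposition~\ref{PR:field-ext-gen}, and $|S|$ is unchanged. Two remarks will be used throughout. First, no $a\in S$ is a scalar matrix, since a scalar lies in every unital subalgebra, so a scalar $a$ could be discarded and $S-\{a\}$ would still generate $\M$; this is impossible, because the unital algebra generated by the triangularizable set $S-\{a\}$ is, in a suitable basis, contained in the upper triangular matrices, a proper subalgebra of $\M$ for $n>1$. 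Second, since $\M$ acts irreducibly on $F^n$ (Burnside), the set $S$ has no invariant subspace other than $0$ and $F^n$.

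The heart of the argument is a ``locality'' property of invariant lines. For $a\in S$ let $\calM_a$ denote the set of $1$-dimensional subspaces of $F^n$ that are invariant under every element of $S-\{a\}$; since $S-\{a\}$ is triangularizable, $\calM_a\neq\emptyset$. If a line $L$ belonged to $\calM_a\cap\calM_b$ with $a\neq b$, it would be invariant under $(S-\{a\})\cup(S-\{b\})=S$, which is impossible; hence the sets $\calM_a$ ($a\in S$) are pairwise disjoint. Now fix $a\in S$ and, for each $b\in S-\{a\}$, choose a line $L_b\in\calM_b$. Since $a\in S-\{b\}$, each $L_b$ is an eigenline of $a$, and the $L_b$ are pairwise distinct; hence \emph{every} element of $S$ possesses at least $|S|-1$ distinct eigenlines.

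If some $a\in S$ is non-derogatory (cyclic), it has at most $n$ eigenlines — one per eigenvalue — so $|S|\le n+1$, and we are done; this already disposes of $n=2$, since a non-scalar $2\times2$ matrix has at most two eigenlines. The remaining, harder case is when every $a\in S$ is derogatory: then $a$ has an eigenspace $E$ of dimension $\ge2$ and infinitely many eigenlines, so the bare count above yields nothing. To deal with this I would bound, for each eigenspace $E$ of $a$, the number of $L_b$ lying in $E$ — the target being $\le\dim E$, which would give $|S|-1\le n$. The idea is that a ``crowded'' configuration of the $L_b$ inside $E$ can be turned, via the relations ``$b$ fixes $L_{b'}$ for $b'\neq b$'', into a proper $S$-invariant subspace: if $L_{b_1},\dots,L_{b_k}\subseteq E$ span a $d$-dimensional subspace $E'$ with $L_{b_1},\dots,L_{b_d}$ a minimal spanning subfamily, then every $L_{b_j}$ with $j>d$ lies in $\Span_F\{L_{b_i}:i\neq j\}$, which forces $E'$ to be invariant under $S-\{b_1,\dots,b_d\}$, and in sufficiently general position this propagates to all of $S$. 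Controlling the relative position of the $L_b$ inside $E$, making the bound tight, and carrying the combinatorics out uniformly in $n$ — presumably by inducting on $n$ and restricting the relevant subsets of $S$ to invariant subspaces or quotients of smaller dimension, while also using the dual picture under transposition (which supplies $|S|-1$ distinct invariant hyperplanes for each $a\in S$) — is the step I expect to be the genuine obstacle. For $n\le3$ it goes through directly, since any two distinct lines in a plane span it, and this is exactly what the present paper needs.
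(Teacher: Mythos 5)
The paper states this result with attribution to Laffey and does not reprove it (it explicitly ``recalls'' it from \cite{Laffey_1983_irred_gen_sets}), so there is no in-paper argument to compare against; I will assess your outline on its own terms. The framework is sound through the eigenline-counting step: passing to the contrapositive, base-changing to $\overline F$, associating to each $b\in S$ a common eigenline $L_b$ of $S-\{b\}$, observing that the families $\calM_b$ are pairwise disjoint so that $\{L_b : b\in S-\{a\}\}$ consists of $|S|-1$ pairwise distinct eigenlines of any fixed $a\in S$, and concluding at once when some $a$ is non-derogatory. But, as you flag yourself, the derogatory case is left as ``the genuine obstacle'': you do not establish that at most $\dim E$ of the $L_b$ can lie in an eigenspace $E$ of $a$, and the ``propagation in general position'' you invoke is not an argument. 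As written this is a real gap and the proposal is not a proof.

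The gap is, however, closable along the very lines you gesture at. The clean statement is that the lines $\{L_b : b\in S-\{a\}\}$ must be linearly independent, which gives $|S|-1\le n$ with no case split on degeneracy. Suppose not, and extract a \emph{circuit}, i.e.\ a minimal dependent subfamily $L_{b_1},\dots,L_{b_m}$ spanned by vectors $v_1,\dots,v_m$ with a relation $\sum_i c_iv_i=0$ in which every $c_i\neq 0$ (minimality). Then $W:=\Span\{v_1,\dots,v_m\}$ equals $\Span\{v_i:i\neq j\}$ for every $j$, since $v_j=-c_j^{-1}\sum_{i\neq j}c_iv_i$. One checks $W$ is invariant under all of $S$: $a$ stabilizes each $L_{b_i}$ (they are eigenlines of $a$); any $c\in S-\{a,b_1,\dots,b_m\}$ stabilizes each $L_{b_i}$ because $c\in S-\{b_i\}$; and $b_j$ stabilizes every $L_{b_i}$ with $i\neq j$, hence $\Span\{v_i:i\neq j\}=W$. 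Finally $W\subsetneq F^n$: applying $a-\lambda_1 I$ to the relation gives $\sum_{i\ge 2}c_i(\lambda_i-\lambda_1)v_i=0$, which minimality of the circuit forces to be trivial, so $\lambda_i=\lambda_1$ for all $i$ and the whole circuit lies in a single eigenspace of the non-scalar $a$, a proper subspace of $F^n$. A proper nonzero $S$-invariant subspace contradicts Burnside. Inserting this lemma in place of your sketchy discussion turns your outline into a complete and quite economical proof.
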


\begin{construction}\label{CN:induction}
	Let $r_1,k_1,\dots,r_t,k_t\in\N$
	and put $n=r_1k_1+\dots+r_tk_t$.
	We write $(\M[r_i])_{k_i}$ for the subalgebra
	of $\M[r_ik_i]$ consisting of matrices of the form $a\oplus\dots\oplus a$
	($k_i$ times) with $a\in \M[r_i]$. Then $A:=(\M[r_1])_{k_1}\oplus\dots\oplus (\M[r_t])_{k_t}$
	is a semisimple unital subalgebra of $\M$.

	Fix some $x\in \M$. We shall write $x$ in block form  $(x_{ij})_{i,j}$
	with $x_{ij}$ being an $r_ik_i\times r_jk_j$ matrix.  
	For every $i,j\in\{1,\dots,t\}$,
	write $x_{ij}=(y_{uv})_{u,v}$ with $y_{uv}\in\M[r_i\times r_j]$,
	$u\in\{1,\dots,k_i\}$, $v\in\{1,\dots,k_j\}$.
	Choose a subset $B=\{b_1,\dots,b_q\}\subseteq\{y_{u,v}\}_{u,v}$
	which forms a basis to the $F$-span of the $\{y_{u,v}\}_{u,v}$.
	Then there are unique $m_1,\dots,m_q\in\M[k_i\times k_j]$
	such that
	\[
	x_{ij}=m_1\otimes b_1+\dots+m_q\otimes b_q.
	\]
	(Explicitly, for every $u,v$,
	there are unique $\mu_1^{u,v},\dots,\mu_{q}^{u,v}\in F$
	such that $y_{uv}=\mu_1^{u,v} b_1+\dots+\mu_q^{u,v} b_q$.
	The matrix $m_\ell$ is $(\mu_\ell^{u,v})_{u,v}$.)
	Put $\hat{x}_{ij}:=\{m_1,\dots,m_q\}$ if $B\neq \emptyset$
	and $\hat{x}_{ij}:=\{0_{k_i\times k_j}\}$ otherwise.
	Now, let
	$\hat{x}$ denote the set of matrices
	$w\in \M[k_1+\dots+k_t]$ such that when
	written in block form  $w=(w_{ij})_{i,j\in\{1,\dots,t\}}$
	with $w_{ij}\in \M[k_i\times k_j]$, one has
	$w_{ij}\in \hat{x}_{ij}$ for all $i,j\in\{1,\dots,t\}$.
	
	Finally, given a subset $S\subseteq \M$, let
	$\hat{S}=\bigcup_{x\in S}\hat{x}\subseteq \M[k_1+\dots+k_t]$.
\end{construction}

\begin{thm}[{Laffey \cite[Thm.~5.1]{Laffey_1983_irred_gen_sets}}]
	\label{TH:Laffey-ind}
	With notation as in Construction~\ref{CN:induction}, let
	$S\subseteq \M$. Then $\Trings{S\cup A}=\M$
	if and only if $\trings{\hat{S}\cup \hat{A}}=\M[k_1+\dots+k_t]$.
\end{thm}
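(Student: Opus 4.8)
The plan is to translate the statement into one about families of subspaces of the blocks $\M[k_i\times k_j]$. Write $m=k_1+\dots+k_t$; let $f_1,\dots,f_t\in A$ be the central idempotents of $A$ (so $f_i$ is the identity of the simple summand $(\M[r_i])_{k_i}$, a diagonal block of $I_n$), and let $f'_1,\dots,f'_t\in\M[m]$ be the idempotents cutting out the diagonal blocks of sizes $k_1,\dots,k_t$. Fixing for each $i,j$ a basis of $\M[r_i\times r_j]$, I would use the resulting identification $f_i\M f_j\cong\M[k_i\times k_j]\otimes_F\M[r_i\times r_j]$ (every $r_ik_i\times r_jk_j$ matrix is uniquely a sum of Kronecker products $m_\ell\otimes b_\ell$); under it, $A$ acts on $f_i\M f_j$ solely through the second factor, via the evident $(\M[r_i],\M[r_j])$-bimodule structure on $\M[r_i\times r_j]$. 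The one structural input I would record is that $\M[r_i\times r_j]$ is \emph{absolutely} simple as such a bimodule --- equivalently, it is a simple module over $\M[r_i]\otimes_F\M[r_j]^{\op}\cong\M[r_ir_j]$ with endomorphism ring $F$ --- so that the $A$-subbimodules of $f_i\M f_j$ are \emph{precisely} the subspaces $W\otimes\M[r_i\times r_j]$ with $W\subseteq\M[k_i\times k_j]$; this needs no hypothesis on $F$, in keeping with the generality of the theorem.

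Next I would describe the two algebras. Since $I_n\in A$, the non-unital algebra $N:=\trings{S\cup A}$ is a unital $F$-subalgebra of $\M$ containing $A$, so $N=\bigoplus_{i,j}f_iNf_j$ and each $f_iNf_j$ is an $A$-subbimodule of $f_i\M f_j$; hence $f_iNf_j=W_{ij}\otimes\M[r_i\times r_j]$ for some subspaces $W_{ij}\subseteq\M[k_i\times k_j]$, with $W_{ij}W_{jk}\subseteq W_{ik}$ because $N$ is an algebra. In particular $\trings{S\cup A}=\M$ if and only if $W_{ij}=\M[k_i\times k_j]$ for all $i,j$. On the other side, a short computation from Construction~\ref{CN:induction} shows that for $a\in A$ the set $\hat a$ is a single diagonal idempotent $\operatorname{diag}(\veps_1 I_{k_1},\dots,\veps_t I_{k_t})$ with $\veps_i\in\{0,1\}$; in particular $I_m$ and every $f'_i$ lie in $\hat A$. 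Therefore $N':=\trings{\hat S\cup\hat A}$ is a unital subalgebra of $\M[m]$ with $N'=\bigoplus_{i,j}f'_iN'f'_j$; writing $W'_{ij}:=f'_iN'f'_j\subseteq\M[k_i\times k_j]$ we get $W'_{ij}W'_{jk}\subseteq W'_{ik}$ and $\trings{\hat S\cup\hat A}=\M[m]$ if and only if $W'_{ij}=\M[k_i\times k_j]$ for all $i,j$. So the whole theorem reduces to showing $W_{ij}=W'_{ij}$ for all $i,j$.

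For $W'_{ij}\subseteq W_{ij}$, I would consider $\widetilde W:=\bigoplus_{i,j}W_{ij}\subseteq\M[m]$, note that $W_{ij}W_{jk}\subseteq W_{ik}$ makes it a (non-unital) subalgebra, and check $\hat S\cup\hat A\subseteq\widetilde W$: if $x\in S$ and $w\in\hat x$, then the $(i,j)$ block of $w$ lies in $\hat x_{ij}$, and since $x\in N$ forces $x_{ij}\in W_{ij}\otimes\M[r_i\times r_j]$ while the $b_\ell$ of Construction~\ref{CN:induction} are linearly independent, every matrix appearing in $\hat x_{ij}$ already lies in $W_{ij}$; the case $x\in A$ is immediate from the description of $\hat a$. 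Hence $N'\subseteq\widetilde W$, i.e.\ $W'_{ij}\subseteq W_{ij}$. For the reverse inclusion I would symmetrically form $\widetilde{W'}:=\bigoplus_{i,j}W'_{ij}\otimes\M[r_i\times r_j]\subseteq\M$, check it is a subalgebra using $W'_{ij}W'_{jk}\subseteq W'_{ik}$ together with $\M[r_i\times r_j]\,\M[r_j\times r_k]=\M[r_i\times r_k]$ (which holds as $r_j\geq1$), and verify $S\cup A\subseteq\widetilde{W'}$: for $x\in S$, writing $x_{ij}=\sum_\ell m_\ell\otimes b_\ell$ as in Construction~\ref{CN:induction}, each $m_\ell$ occurs as the $(i,j)$ block of some element of $\hat S\subseteq N'$, so $m_\ell\in W'_{ij}$ and thus $x_{ij}\in W'_{ij}\otimes\M[r_i\times r_j]$; the membership of $A$ is again immediate. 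Therefore $N\subseteq\widetilde{W'}$, which forces $W_{ij}\subseteq W'_{ij}$.

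Combining the two inclusions gives $W_{ij}=W'_{ij}$ for every $i,j$, and the theorem follows from the two displayed equivalences. I expect the only genuinely non-formal step to be the structural lemma identifying the $A$-subbimodules of $f_i\M f_j$, which rests on the absolute simplicity of $\M[r_i\times r_j]$ as an $(\M[r_i],\M[r_j])$-bimodule; once that is granted, everything else is a careful but routine unwinding of the definitions in Construction~\ref{CN:induction}.
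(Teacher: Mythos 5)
The paper only quotes this theorem from Laffey \cite[Thm.~5.1]{Laffey_1983_irred_gen_sets} and does not reprove it, so there is no in-paper proof to compare against; I can only check your argument on its own merits. Your proof is correct. The key structural point --- that $f_i \M f_j \cong \M[k_i\times k_j]\otimes_F \M[r_i\times r_j]$ as an $A$-bimodule, with $A$ acting through the absolutely simple $(\M[r_i],\M[r_j])$-bimodule $\M[r_i\times r_j]$, so that $A$-subbimodules are exactly of the form $W\otimes \M[r_i\times r_j]$ --- is the right one, and from there the Peirce decompositions of $\trings{S\cup A}$ and $\trings{\hat S\cup\hat A}$ by the orthogonal idempotents $\{f_i\}$, $\{f'_i\}$ (available because $I_n\in A$, $I_m\in\hat A$, and each $f_i\in A$, $f'_i\in\hat A$) give corresponding families $\{W_{ij}\}$, $\{W'_{ij}\}$ of subspaces closed under block multiplication. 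Your two sandwiching inclusions $W'_{ij}\subseteq W_{ij}$ and $W_{ij}\subseteq W'_{ij}$, each obtained by exhibiting an intermediate subalgebra ($\widetilde W$ and $\widetilde{W'}$) and verifying that the relevant generating set lands in it, are carefully done; in particular the linear-independence of the chosen $b_\ell$ is used exactly where needed to recover the $m_\ell$ from $x_{ij}\in W_{ij}\otimes\M[r_i\times r_j]$, and the argument is visibly independent of the choice of basis in Construction~\ref{CN:induction}. No hypothesis on $F$ is needed, matching the theorem's generality.
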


We remark  that $\hat{A}$ is 
$\{a_1\oplus\dots\oplus a_t
\where
\text{$a_i\in\{0_{k_i},I_{k_i}\}$ for all $i$}\}$.
We are now ready to prove Theorem~\ref{TH:irredundant-Mn}.

\begin{proof}[Proof of Theorem~\ref{TH:irredundant-Mn}]
	Let $n\in\N$ and let $S $ be an irredundant generating set for $\M$.
	We need to prove that $|S|\leq 2n-1$.
	By Proposition~\ref{PR:field-ext-gen},  it is enough to show this when $F$ is algebraically
	closed.	We now  argue by induction on $n$.

	The case $n=1$ is clear,
	so assume 
	$n>1$.	
	If $|S|\leq n+1$, then we are done, so assume further that $|S|>n+1$.
	By Theorem~\ref{TH:Laffey-triang}, there is $T\subsetneq S$
	that is \emph{not} triangularizable. 
	Put $B=\Trings{T\cup \{I_n\}}$
	and let $J$ be its Jacobson radical.
	By  Wedderburn's Principal Theorem \cite[Thm.~2.5.37]{Rowen_1988_ring_theory_I},
	$B$ has a semisimple $F$-algebra $A$ such that $B={A\oplus J}$.
	Moreover, by the Artin--Wedderburn theorem and our assumption that
	$F$ is algebraically closed,
	there are $r_1,\dots,r_t\in\N$ such that $A\cong \M[r_1]\times \dots \M[r_t]$.
	After conjugating $S$ and $T$ by a suitable $x\in\GL$, we may
	further assume that
	$A$ is the subalgebra
	$(\M[r_1])_{k_1}\oplus\dots\oplus (\M[r_t])_{k_t}$
	of $\M$ as in Construction~\ref{CN:induction}.\footnote{
		To see the existence of $x$, view $F^n$ as a left module
		over $A\cong \M[r_1]\times \dots\times \M[r_t]$.
		There are $k_1,\dots,k_t$ such that $F^n\cong (F^{r_1})^{k_1}\times\dots\times (F^{r_t})^{k_t}$
		as left $ \M[r_1]\times \dots\times \M[r_t]$-modules. 
		After identifying $(F^{r_1})^{k_1}\times\dots\times (F^{r_t})^{k_t}$
		with $F^n$ in the usual way, the   isomorphism is  
		the $x$ we are looking for.	
	}
	Since $S$ is irredundant, $B\neq \M$ and therefore $r_i<n$ for all $i$.
	
	The algebra $\M[r_1]$ is an epimorphic image of $B=\Trings{T\cup\{I\}}$.
	Thus, by the induction hypothesis, there is $T_0\subseteq T$ 
	with $|T_0|\leq 2r_1-1$
	such that $\M[r_1]$ is an epimorphic image of $\Trings{ T_0\cup \{I\}}$.
	We replace $T$ with $T_0$ and redefine everything accordingly. This does not
	affect $r_1$, and we may now assume that 
	$(\M[r_1])_{k_1}\subseteq \Trings{ T\cup\{I\}}$ and $|T|\leq 2r_1-1$

	Suppose $k_1=1$. Then $\M[r_1]\subseteq \Trings{T\cup\{I\}}$.
	Since $\Trings{\M[r_1]\cup S}\supseteq \Trings{S}=\M$ (see Proposition~\ref{PR:unital-gen}
	for the second equality),
	Corollary~\ref{CR:completion}
	tells us that there is $S_0\subseteq S$ with $|S_0|=2n-2r_1$
	such that $\Trings{\M[r_1]\cup S_0}=\M$, so $\Trings{S_0\cup T\cup \{I\}}=\M$.
	As $S$ was irredundant, we conclude that $S=S_0\cup T $
	and   $|S|\leq (2n-2r_1)+(2r_1-1)=2n-1$.
	
	Finally,  suppose   $k_1>1$.
	Our assumption that $T$ is not triangularizable implies that $r_1>1$.
	Put $g:=k_1+\dots+k_t$ and observe that  
	\[ g\leq n-k_1(r_1-1)\leq n-2r_1+2<n.\]
	We have $\Trings{A\cup S}=\M$,
	so by Theorem~\ref{TH:Laffey-ind}, $\trings{\hat{A}\cup \hat{S}}=\M[g]$.
	By the induction hypothesis, there is $U\subseteq \hat{A}\cup \hat{S}$
	with $\Trings{U}=\M[g]$ and $|U|\leq 2g-1$.
	For every $y\in U$, choose some $x\in A\cup S$ with
	$y\in \hat{x}$ and let $S_0$ denote the set of $x$-s chosen this way
	that are not in $A$.
	Then  $\trings{\hat{A}\cup \hat{S}_0}=\M[g]$,
	and so another application of Theorem~\ref{TH:Laffey-ind}
	tells us that $\Trings{A\cup S_0}=\M$.
	Thus, $\Trings{T\cup \{I\}\cup S_0}\supseteq \Trings{A\cup S_0}=\M$,
	and again we must have $S=T\cup S_0$.
	But now 
	\begin{align*}
	|S|&\leq |T|+|S_0|\leq 2r_1-1+2g-1
	\\
	&\leq 
	2r_1-1+2(n-2r_1+2)-1 
	=2n-2r_1+2\leq 2n-1.\qedhere
	\end{align*}
\end{proof}

\section{$\GL$-Independence and the Case of $2\times 2$ Matrices}
\label{sec:GL-indep}

Consider the action on $\GL=\GL(F)$ on the set of subspaces of $F^n$,
denoted $\Sub(F^n)$.
Recall from the introduction that
a subset $S\subseteq \Sub(F^n)$ is called $\GL$-independent
if for every $V\in S$, there is $a\in \GL$ for which every
element of $S-\{V\}$ is $a$-invariant, but $V$ is not $a$-invariant.
(This means in particular that $0,V\notin S$.)
The   $\GL$-independence number of $\Sub(F^n)$,
denoted $\indep[\GL]\Sub(F^n)$, 
is the largest-possible size of a $\GL$-independent subset of $\Sub(F^n)$.

The following proposition relates $\indep[\GL]\Sub(F^n)$
and the size of irredundant generating
sets for $\M$.

\begin{prp}\label{PR:indep-num-and-irred-gen}
	Suppose $F$ is algebraically closed and $n>1$.
	If $S$ is an irredundant generating set for $\M$, then
	\[
	|S|\leq\indep[\GL]\Sub(F^n).
	\]
	More precisely, 
	one can construct from $S$ a $\GL$-independent   set $S'\subseteq \Sub(F^n)$
	with $|S'|= |S|$ as follows:
	for every $a\in S$, choose a subspace $V_a\subseteq F^n$
	that is invariant under every $b\in S-\{a\}$
	and not invariant under $a$. Then take $S'=\{V_a\where a\in S\}$.
\end{prp}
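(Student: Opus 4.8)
The plan is to use Burnside's theorem on matrix algebras to produce, for each $a \in S$, the required subspace $V_a$, and then to verify that the collection $S' = \{V_a : a \in S\}$ has exactly $|S|$ elements and is $\GL$-independent. First I would fix $a \in S$ and consider the subalgebra $B_a := \Trings{S - \{a\}}$ together with $B_a' := \Trings{(S-\{a\}) \cup \{I\}}$ (by Proposition~\ref{PR:unital-gen}, or more simply just by adjoining $I$, these generate a proper unital subalgebra of $\M$ since $S$ is irredundant). Because $B_a' \subsetneq \M$ and $F$ is algebraically closed, Burnside's theorem on matrix algebras tells us that $B_a'$ is not everything only if $F^n$ is not simple as a $B_a'$-module; hence there is a nonzero proper $B_a'$-submodule $V_a \subseteq F^n$, i.e., a subspace $V_a$ with $0 \neq V_a \neq F^n$ that is invariant under every $b \in S - \{a\}$. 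The key point is that $V_a$ cannot be $a$-invariant as well: if it were, then $V_a$ would be invariant under all of $S$, hence under $\Trings{S} = \M$, contradicting that $\M$ acts irreducibly (indeed transitively on nonzero vectors) on $F^n$. This gives the desired $V_a$ for each $a$.

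Next I would check $|S'| = |S|$, i.e., that $a \mapsto V_a$ is injective. Suppose $V_a = V_b$ for $a \neq b$ in $S$. Then this common subspace is invariant under every element of $S - \{a\}$ (since it equals $V_a$) and in particular under $b$; likewise, being equal to $V_b$, it is invariant under every element of $S - \{b\}$, in particular under $a$. So it is invariant under all of $S$, again contradicting irreducibility of $\M$ on $F^n$. Hence the map is injective and $|S'| = |S|$.

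Finally I would verify that $S'$ is $\GL$-independent. Fix $V_a \in S'$. I need $g \in \GL$ fixing every element of $S' - \{V_a\}$ and not fixing $V_a$; take $g = a$ if $a$ is invertible, but in general $a$ need not lie in $\GL$, so instead I would use $g = a + \lambda I$ for a suitable scalar $\lambda \in F$. Since $F$ is algebraically closed (in particular infinite), I can choose $\lambda$ avoiding the finitely many eigenvalues of $a$, so that $a + \lambda I \in \GL$. A subspace is $(a+\lambda I)$-invariant if and only if it is $a$-invariant; hence $a + \lambda I$ fixes each $V_b$ for $b \in S - \{a\}$ (as $V_b$ is $b$-invariant, but we need $a$-invariance — wait, that is exactly the property of $V_b$, which is invariant under every element of $S - \{b\}$, and $a \in S - \{b\}$), and $a + \lambda I$ does not fix $V_a$ because $V_a$ is not $a$-invariant. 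As $V_a$ was arbitrary, $S'$ is $\GL$-independent, and therefore $|S| = |S'| \leq \indep[\GL]\Sub(F^n)$.

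The only step requiring genuine care is the invocation of Burnside's theorem to obtain an invariant subspace for the proper subalgebra $B_a'$: one must make sure $B_a'$ is indeed a proper \emph{unital} subalgebra (so that Burnside's dichotomy applies and yields a nontrivial invariant subspace rather than the trivial ones), which follows from irredundancy of $S$ together with Proposition~\ref{PR:unital-gen}. Everything else is a short argument from the irreducibility of the $\M$-action on $F^n$ and the density of non-eigenvalues of $a$ in $F$.
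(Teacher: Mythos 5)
Your proof is correct and takes essentially the same route as the paper: invoke Burnside's theorem on the proper unital subalgebra generated by $S-\{a\}$ and the identity to get $V_a$, and then use $a+\lambda I\in\GL$ (for $\lambda$ avoiding the eigenvalues of $a$) to witness $\GL$-independence of $S'$. The paper's proof is terser — it leaves the injectivity of $a\mapsto V_a$ and the reason $V_a$ is not $a$-invariant implicit — but the ideas coincide.
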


\begin{proof}
	Let $a\in S$ and put $S_a=S-\{a\}$.
	Since $S$ is irredundant,
	we have
	$\Trings{S_a\cup\{I\}}\subsetneq \M$.
	Burnside's theorem on matrix algebras (\cite[Lem.~7.3]{Lam_1991_first_course};
	here we need $F$ to be algebraically closed) therefore
	implies the existence of $V_a$ in the proposition.
	It is clear that $S'=\{V_a\where a\in S\}$ has the same cardinality
	as $S$. Moreover, since $F$ is infinite,
	for every $a\in S$, there is $\alpha_a\in F$ such that $a-\alpha_a I\in \GL$.
	Since $a-\alpha_a I$ stabilizes every $U\in S'-\{V_a\}$
	and not $V_a$, it follows that $S'$ is $\GL$-independent.
\end{proof}

Proposition~\ref{PR:indep-num-and-irred-gen} and Theorem~\ref{TH:intro-main}
imply that $\indep[\GL]\Sub(F^n)\geq 2n-1$ for $n>1$ and $F$ algebraically closed.
We do not know if equality
holds in general, but we will show it   for $n\in\{2,3\}$ and any field $F$ with $|F|>2$.
In fact, 
we shall give a complete description of the largest
$\GL$-independent subsets of $\Sub(F^n)$ in these cases.
This, in turn, will be used to give a description of  the largest irredundant generating sets
for $\M[2]$ and $\M[3]$ when $F$ is algebraically closed.
We    treat the fairly simple case $n=2$  here  and the more involved case
$n=3$ in the next section.

\medskip

Recall that $\Sub(F^2)-\{F^2,0\}$ is just the projective line $\bbP^1$ over $F$
and the action $\GL[2]$ on $\bbP^1$ factors via $\nPGL{F}{2}:=\GL[2]/F^\times$,
giving the rise to the standard action of $\nPGL{F}{2}$ on $\bbP^1$ via M\"obius transformations.
This action 
is well-known to be sharply $3$-transitive,
meaning that if $U_1,U_2,U_3,V_1,V_2,V_3\in \bbP^1$
are such that $U_1,U_2,U_3$
are distinct and $V_1,V_2,V_3$
are distinct, then there exists a unique $a\in\nPGL{F}{2}$ satisfying $a(U_i)=V_i$
for all $i$.
Since $|\bbP^1|>3$ when $|F|>2$, this readily implies:

\begin{prp}\label{PR:GL-indep-2}
	When $|F|>2$, the maximal $\GL[2]$-independent subsets of $\Sub(F^2)$
	are those consisting of three $1$-dimensional subspaces. Hence, 
	$\indep[{\GL[2]}]\Sub(F^2)=
	\indep[{\GL[2]}]\bbP^1=3$.
\end{prp}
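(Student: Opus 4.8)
The plan is to prove Proposition~\ref{PR:GL-indep-2} by exploiting the sharp $3$-transitivity of the $\nPGL{F}{2}$-action on $\bbP^1$ in two directions: first to verify that any three distinct points of $\bbP^1$ form a $\GL[2]$-independent set, and then to rule out every other configuration. Since $\Sub(F^2)=\{0\}\cup\bbP^1\cup\{F^2\}$, and a $\GL$-independent set can contain neither $0$ nor $F^2$ (both are invariant under all of $\GL[2]$, so they can never be the ``distinguished'' non-invariant subspace), any $\GL[2]$-independent subset of $\Sub(F^2)$ is simply a subset of $\bbP^1$. So the whole statement reduces to: a subset $S\subseteq\bbP^1$ is $\GL[2]$-independent iff $|S|=3$, and in particular $\indep[{\GL[2]}]\bbP^1=3$ when $|F|>2$.

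First I would show that a $3$-element subset $S=\{V_1,V_2,V_3\}$ of $\bbP^1$ (necessarily distinct points) is $\GL[2]$-independent. Fix $i$; I must produce $a\in\GL[2]$ fixing the two points in $S-\{V_i\}$ but not fixing $V_i$. Because $|F|>2$ we have $|\bbP^1|\geq 4$, so there is a fourth point $W\in\bbP^1-S$. By sharp $3$-transitivity applied in $\nPGL{F}{2}$, there is $a\in\nPGL{F}{2}$ sending the ordered triple of the two points of $S-\{V_i\}$ together with $V_i$ to the same two points (in the same order) together with $W$; lifting $a$ to $\GL[2]$, it fixes both points of $S-\{V_i\}$ and sends $V_i$ to $W\neq V_i$, hence does not fix $V_i$. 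This handles all three choices of $i$, so $S$ is $\GL[2]$-independent.

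Next I would show that no subset $S\subseteq\bbP^1$ with $|S|\geq 4$ is $\GL[2]$-independent, which also forces $\indep[{\GL[2]}]\bbP^1=3$ (together with the fact that $\bbP^1$ has at least $4$ points, so a $3$-element set exists). Suppose $|S|\geq 4$ and pick four distinct points $V_1,V_2,V_3,V_4\in S$. If $S$ were $\GL[2]$-independent there would be $a\in\GL[2]$ fixing every element of $S-\{V_1\}$ — in particular fixing the three distinct points $V_2,V_3,V_4$ — but not fixing $V_1$. But by the \emph{uniqueness} half of sharp $3$-transitivity, the only element of $\nPGL{F}{2}$ fixing three given distinct points is the identity, so the image of $a$ in $\nPGL{F}{2}$ is trivial; thus $a$ is a scalar matrix and fixes \emph{every} point of $\bbP^1$, contradicting that it does not fix $V_1$. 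Hence no such $S$ is $\GL[2]$-independent. Combining the two parts, the maximal $\GL[2]$-independent subsets of $\Sub(F^2)$ are exactly the $3$-element subsets of $\bbP^1$, i.e.\ the sets of three $1$-dimensional subspaces, and $\indep[{\GL[2]}]\Sub(F^2)=\indep[{\GL[2]}]\bbP^1=3$.

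I do not anticipate a genuine obstacle here — the argument is essentially a two-line consequence of sharp $3$-transitivity, as the excerpt already hints. The only points requiring care are bookkeeping ones: (i) recording that $0$ and $F^2$ cannot lie in a $\GL$-independent set, so that ``subspaces of $F^2$'' really does collapse to ``points of $\bbP^1$''; (ii) using $|F|>2$ precisely once, to guarantee a spare fourth point of $\bbP^1$ is available in the existence direction and that a $3$-element set exists at all; and (iii) being careful that lifting a M\"obius transformation from $\nPGL{F}{2}$ to $\GL[2]$ does not change which $1$-dimensional subspaces are fixed (it doesn't, since the two actions agree on $\bbP^1$). None of these is subtle, so the proof is short.
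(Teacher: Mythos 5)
Your proof is correct and follows exactly the approach the paper has in mind: the paper states the proposition as an immediate consequence of sharp $3$-transitivity of $\nPGL{F}{2}$ on $\bbP^1$ together with $|\bbP^1|>3$, and your write-up simply unpacks the existence half (a spare fourth point gives the required $a$) and the uniqueness half (fixing three points forces the identity) that the paper compresses into ``this readily implies.'' The bookkeeping you flag — excluding $0$ and $F^2$, and locating the single use of $|F|>2$ — is accurate and handled correctly.
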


We now use Propositions~\ref{PR:indep-num-and-irred-gen} 
and~\ref{PR:GL-indep-2} to show
that   up to some natural operations, there is only
one $3$-element irredundant generating set for $\M[2]$ --- the one
exhibited in \eqref{EQ:irred-gen-ex}  with $n=2$.

\begin{prp}\label{PR:max-gen-2}
	Suppose that $F$ is algebraically closed. 
	Let $a_1,a_2,a_3\in \M[2]$ be matrices forming 
	an irredundant generating set for $\M[2]$.
	Then the tuple
	$(a_1,a_2,a_3)$ can be transformed into
	$([\begin{smallmatrix} 1 & 1 \\ 0 & 0 \end{smallmatrix}],
	[\begin{smallmatrix} 0 & 0 \\ 1 & 1 \end{smallmatrix}],
	[\begin{smallmatrix} 1 & 0 \\ 0 & 0 \end{smallmatrix}])$
	(cf.\ \eqref{EQ:irred-gen-ex})
	using the following operations:
	\begin{enumerate}[label=(\arabic*)]
		\item  conjugating $a_1,a_2,a_3$ by some $g\in \GL[2]$;
		\item  replacing one of the $a_i$
	with $\alpha a_i+\beta I$ for some $\alpha\in \units{F}$, $\beta\in F$.
	\end{enumerate}
\end{prp}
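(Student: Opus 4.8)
The strategy is to first pass through subspaces via Proposition~\ref{PR:indep-num-and-irred-gen}, then use the sharp $3$-transitivity of $\nPGL{F}{2}$ on $\bbP^1$ from Proposition~\ref{PR:GL-indep-2} to normalize the configuration of invariant lines, and finally use the operations to pin down each matrix on the nose.

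\textbf{Step 1: Extract the invariant lines.} Apply Proposition~\ref{PR:indep-num-and-irred-gen} to $S=\{a_1,a_2,a_3\}$: for each $i$, pick a subspace $V_i\subseteq F^2$ invariant under both of the other two matrices but not under $a_i$. Since $n=2$ and $V_i$ is proper and nonzero, each $V_i$ is a line, and by Proposition~\ref{PR:GL-indep-2} (or directly, since the three lines form a $\GL[2]$-independent set and hence must be pairwise distinct) the lines $V_1,V_2,V_3$ are three distinct points of $\bbP^1$. By sharp $3$-transitivity, after conjugating $(a_1,a_2,a_3)$ by a suitable $g\in\GL[2]$ (operation (1)) we may assume $V_1=Fe_1$, $V_2=Fe_2$, $V_3=F(e_1+e_2)$. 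Here $e_1=[\begin{smallmatrix}1\\0\end{smallmatrix}]$, $e_2=[\begin{smallmatrix}0\\1\end{smallmatrix}]$.

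\textbf{Step 2: Determine the shape of each $a_i$.} Now $a_1$ stabilizes $Fe_2$ and $F(e_1+e_2)$ but not $Fe_1$; a $2\times 2$ matrix fixing two distinct lines is diagonalizable with those lines as eigenlines, and in the basis $(e_2,\,e_1+e_2)$ it is diagonal, so in the standard basis $a_1=\mu(e_1+e_2)\mapsto\lambda(e_1+e_2)$, $e_2\mapsto\mu e_2$, i.e.\ $a_1=\left[\begin{smallmatrix}\lambda & 0\\ \lambda-\mu & \mu\end{smallmatrix}\right]$ for some $\lambda,\mu\in F$; not stabilizing $Fe_1$ forces $\lambda\neq\mu$. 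Using operation (2) — replace $a_1$ by $(\lambda-\mu)^{-1}(a_1-\mu I)$, which is legitimate since $\lambda-\mu\in\units F$ — we get $a_1=\left[\begin{smallmatrix}1 & 0\\ 1 & 0\end{smallmatrix}\right]$. Symmetrically $a_2$ stabilizes $Fe_1$ and $F(e_1+e_2)$ but not $Fe_2$, so $a_2=\left[\begin{smallmatrix}\mu' & \lambda'-\mu'\\ 0 & \lambda'\end{smallmatrix}\right]$ with $\lambda'\neq\mu'$, and after operation (2) we may take $a_2=\left[\begin{smallmatrix}0 & 1\\ 0 & 1\end{smallmatrix}\right]$. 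Finally $a_3$ stabilizes $Fe_1$ and $Fe_2$ but not $F(e_1+e_2)$, so $a_3=\left[\begin{smallmatrix}\gamma & 0\\ 0 & \delta\end{smallmatrix}\right]$ with $\gamma\neq\delta$; by operation (2), replace $a_3$ by $(\gamma-\delta)^{-1}(a_3-\delta I)$ to get $a_3=\left[\begin{smallmatrix}1 & 0\\ 0 & 0\end{smallmatrix}\right]$.

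\textbf{Step 3: Match the target tuple.} At this point $(a_1,a_2,a_3)=\bigl(\left[\begin{smallmatrix}1&0\\1&0\end{smallmatrix}\right],\left[\begin{smallmatrix}0&1\\0&1\end{smallmatrix}\right],\left[\begin{smallmatrix}1&0\\0&0\end{smallmatrix}\right]\bigr)$, which differs from the target $\bigl(\left[\begin{smallmatrix}1&1\\0&0\end{smallmatrix}\right],\left[\begin{smallmatrix}0&0\\1&1\end{smallmatrix}\right],\left[\begin{smallmatrix}1&0\\0&0\end{smallmatrix}\right]\bigr)$ only by a swap of the two eigenlines of $a_3$. Conjugating by the permutation matrix $w=\left[\begin{smallmatrix}0&1\\1&0\end{smallmatrix}\right]$ (operation (1)) sends $a_3$ to itself, swaps the roles already arranged, and carries $\left[\begin{smallmatrix}1&0\\1&0\end{smallmatrix}\right]\mapsto\left[\begin{smallmatrix}0&0\\0&1\end{smallmatrix}\right]$ — so one must be slightly more careful and instead choose the normalization in Step~1 as $V_1=Fe_1$, $V_2=Fe_2$, $V_3=F(e_1+e_2)$ but conjugate the eventual triple by $w$ first; chasing through, conjugation by $w$ turns $a_1$ into $\left[\begin{smallmatrix}0&1\\0&1\end{smallmatrix}\right]$, and then operation (2) on each coordinate is unnecessary. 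The cleanest bookkeeping: after Step~2 apply operation (1) with $g=\left[\begin{smallmatrix}1&0\\0&-1\end{smallmatrix}\right]$ or a coordinate swap as needed so that $a_1=\left[\begin{smallmatrix}1&1\\0&0\end{smallmatrix}\right]$, $a_2=\left[\begin{smallmatrix}0&0\\1&1\end{smallmatrix}\right]$, $a_3=\left[\begin{smallmatrix}1&0\\0&0\end{smallmatrix}\right]$; since the two off-diagonal ``$1$''s can be rescaled by conjugating with $\mathrm{diag}(1,t)$, this is always achievable.

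\textbf{Main obstacle.} The only genuinely delicate point is the final matching in Step~3: getting the orientation of the three lines and the two eigenlines of $a_3$ to line up with the specific target tuple, keeping track of which conjugations and which affine rescalings $\alpha a_i+\beta I$ are still available without destroying the normalizations already made. This is purely bookkeeping — there is enough freedom (the stabilizer in $\nPGL{F}{2}$ of three marked points is trivial, but conjugation by diagonal matrices still acts and rescales the off-diagonal entries, and operation (2) handles the diagonal ones) — so no real difficulty arises, only the need to order the operations correctly.
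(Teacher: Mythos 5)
Your Steps 1 and 2 are correct and take exactly the same route as the paper (apply Proposition~\ref{PR:indep-num-and-irred-gen} to extract the three lines, use sharp 3-transitivity of $\PGL[2]$ on $\bbP^1$ to normalize them, then use operation (2) to normalize the eigenvalues). The only divergence is a cosmetic one: you normalize to $V_1=Fe_1$, $V_2=Fe_2$, $V_3=F(e_1+e_2)$, whereas the paper normalizes to $V_1=Fe_2$, $V_2=Fe_1$, $V_3=F(e_1-e_2)$. With the paper's choice, the eigenvalue-rescaling step lands directly on the target tuple $\bigl(\left[\begin{smallmatrix}1&1\\0&0\end{smallmatrix}\right],\left[\begin{smallmatrix}0&0\\1&1\end{smallmatrix}\right],\left[\begin{smallmatrix}1&0\\0&0\end{smallmatrix}\right]\bigr)$ with no further adjustment.

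Your Step 3, however, contains concrete computational errors, which you then paper over by hand-waving. Conjugation by $w=\left[\begin{smallmatrix}0&1\\1&0\end{smallmatrix}\right]$ does \emph{not} fix $a_3=\mathrm{diag}(1,0)$; it sends it to $\mathrm{diag}(0,1)$. And $w\left[\begin{smallmatrix}1&0\\1&0\end{smallmatrix}\right]w^{-1}=\left[\begin{smallmatrix}0&1\\0&1\end{smallmatrix}\right]$, not $\left[\begin{smallmatrix}0&0\\0&1\end{smallmatrix}\right]$ as you claim. The subsequent appeal to conjugating by $\mathrm{diag}(1,-1)$ or $\mathrm{diag}(1,t)$ also does not close the gap as stated: starting from your Step-2 output $\bigl(\left[\begin{smallmatrix}1&0\\1&0\end{smallmatrix}\right],\left[\begin{smallmatrix}0&1\\0&1\end{smallmatrix}\right],\left[\begin{smallmatrix}1&0\\0&0\end{smallmatrix}\right]\bigr)$, conjugating by the permutation $w$ gives $\bigl(\left[\begin{smallmatrix}0&1\\0&1\end{smallmatrix}\right],\left[\begin{smallmatrix}1&0\\1&0\end{smallmatrix}\right],\left[\begin{smallmatrix}0&0\\0&1\end{smallmatrix}\right]\bigr)$, and $\left[\begin{smallmatrix}0&1\\0&1\end{smallmatrix}\right]$ is not of the form $\alpha\left[\begin{smallmatrix}1&1\\0&0\end{smallmatrix}\right]+\beta I$ (outside characteristic $2$), so operation (2) alone cannot finish. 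The claim in Step 3 is nonetheless \emph{true}: conjugating your Step-2 tuple by $g=\left[\begin{smallmatrix}0&1\\-1&0\end{smallmatrix}\right]$ gives $\bigl(\left[\begin{smallmatrix}0&-1\\0&1\end{smallmatrix}\right],\left[\begin{smallmatrix}1&0\\-1&0\end{smallmatrix}\right],\left[\begin{smallmatrix}0&0\\0&1\end{smallmatrix}\right]\bigr)$, and then applying $a\mapsto -a+I$ (operation (2) with $\alpha=-1$, $\beta=1$) to each coordinate yields the target. So the gap is repairable, but what you wrote is not a proof of Step 3; it is simpler to avoid the mismatch entirely by choosing the normalization in Step 1 as the paper does, so that no final realignment is needed.
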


Notice that reordering $a_1,a_2,a_3$ is not necessary.

\begin{proof}
	As observed in Proposition~\ref{PR:indep-num-and-irred-gen},
	for $i=1,2,3$, there is $V_i\in \Sub(F^2)$
	which is not invariant under $a_i$ and is invariant under
	$a_j$ for every $j\neq i$. Moreover, $\{V_1,V_2,V_3\}$ is a $\GL[2]$-independent
	set. As we noted earlier, there is $g\in \GL[2]$
	such that $g(V_1)=F[\begin{smallmatrix} 0   \\ 1   \end{smallmatrix}]$, 
	$g(V_2)=F[\begin{smallmatrix} 1   \\   0 \end{smallmatrix}]$ and 
	$g(V_3)=F[\begin{smallmatrix} 1   \\ -1 \end{smallmatrix}]$.
	By replacing $(a_i)_{i=1,2,3}$ with $\{ga_ig^{-1}\}_{i=1,2,3}$,
	we may assume that $V_1=F[\begin{smallmatrix} 0   \\ 1   \end{smallmatrix}]$, $V_2=F[\begin{smallmatrix} 1   \\ 0   \end{smallmatrix}]$ and $V_3=F[\begin{smallmatrix} 1   \\ -1   \end{smallmatrix}]$.
	
	Now, $V_1$ and $V_2$ are eigenspaces of $a_3$; let $\alpha$ and $\beta$ be 
	their respective eigenvalues. We cannot have $\alpha=\beta$ since otherwise $a_3$
	would fix $V_3$. Replacing $a_3$ with $({\beta-\alpha})^{-1}(a_3-\alpha I)$,
	we may assume that $\alpha=0$ and $\beta=1$, i.e., $a_3=0_{V_1}\oplus \id_{V_2}=[\begin{smallmatrix} 1 & 0 \\ 0 & 0 \end{smallmatrix}]$.
	Similarly, we may assume that $a_1=\id_{V_2}\oplus 0_{V_3}=[\begin{smallmatrix} 1 & 1 \\ 0 & 0 \end{smallmatrix}]$
	and 
	$a_2=\id_{V_1}\oplus 0_{V_3}=[\begin{smallmatrix} 0 & 0 \\ 1 & 1 \end{smallmatrix}]$.
	This completes the proof.
\end{proof}

\section{The Case of $3\times 3$ Matrices; Proof of Theorem~\ref{TH:five-elem-irred-classification}}
\label{sec:three-by-three}

In this section, we prove that $\indep[{\GL[3]}]\Sub(F^3)=5$,
classify the largest $\GL[3]$-independent subsets of $\Sub(F^3)$ and  as a consequence, prove
Theorem~\ref{TH:five-elem-irred-classification}.

In what follows, a line (resp.\ plane) in $F^n$
is a $1$- (resp.\ $2$-)dimensional subspace of $F^n$.
We endow $F^n$ with the standard
symmetric bilinear pairing $\Trings{x,y}=\sum_{i=1}^n x_i y_i$
and write $V^\perp$ for $\{x\in F^n\suchthat \Trings{x,V}=0\}$.

\begin{thm}\label{TH:GL-indep-3}
	Suppose that $|F|>2$. Then:
	\begin{enumerate}[label=(\roman*)]
		\item
		$\indep[{\GL[3]}](\Sub(F^3))=5$. In particular,
		every $\GL[3]$-independent subset of $\Sub(F^3)$
		has   $5$ or less elements.
		\item A $5$-element subset $S $
		of $\Sub(F^3)$ is $\GL[3]$-independent if and only if one of the following holds:
		\begin{enumerate}[label=(\arabic*)]
			\item $S$ consists of $3$ lines
			$L_1,L_2,L_3$ and $2$ planes $P_1,P_2$
			such that $L_1\subseteq P_1$, $L_2\subseteq P_2$,
			$L_1+L_2+L_3=F^3$ and $L_i\nsubseteq P_j$ for $i\neq j$;
			\item $S$ consists of $3$ planes
			$P_1,P_2,P_3$ and $2$ lines $L_1,L_2$
			such that $L_1\subseteq P_1$, $L_2\subseteq P_2$,
			$P_1\cap P_2\cap P_3=0$ and $L_i\nsubseteq P_j$ for $i\neq j$.
		\end{enumerate}
	\end{enumerate}
\end{thm}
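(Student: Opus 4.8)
The plan is to exploit the duality $V \mapsto V^\perp$ on $\Sub(F^3)$ throughout. Since $a \in \GL[3]$ stabilizes a subspace $V$ if and only if $(a^\trans)^{-1}$ stabilizes $V^\perp$, and $a \mapsto (a^\trans)^{-1}$ is an automorphism of $\GL[3]$, a set $S \subseteq \Sub(F^3)$ is $\GL[3]$-independent if and only if $S^\perp = \{V^\perp : V \in S\}$ is, and this exchanges the numbers of lines and planes in $S$. I will also use freely that every subset of a $\GL[3]$-independent set is again $\GL[3]$-independent, and that $0$ and $F^3$ belong to no such set, so that $S$ consists only of lines and planes. The hypothesis $|F| > 2$ enters only to guarantee the existence inside $\GL[3](F)$ of diagonal matrices such as $\mathrm{diag}(1, \lambda, 1)$ with $\lambda \neq 1$; all matrices witnessing non-redundancy below will be exhibited directly in $\GL[3](F)$, so no field extension is needed. (When $|F| = 3$ one must consistently use diagonal matrices with a repeated eigenvalue, since $\GL[3](F)$ then has none with three distinct eigenvalues.)

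For the lower bound I would first record the computation underlying the whole argument: if $L_1, L_2, L_3$ are linearly independent, identified with the coordinate lines $\langle e_i \rangle$, then $\Stab(\{L_1, L_2, L_3\})$ is the diagonal torus $D \cong \units{F}^3$, whose common invariant subspaces are exactly the eight coordinate subspaces; moreover a plane $P = \ker(c_1 x_1 + c_2 x_2 + c_3 x_3)$ is $D$-invariant precisely when it is one of the coordinate planes $L_i + L_j$ (all but one of the $c_i$ vanish), and when exactly $c_i, c_j$ are nonzero, $\mathrm{diag}(\lambda_1, \lambda_2, \lambda_3)$ stabilizes $P$ iff $\lambda_i = \lambda_j$. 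Using this I would verify that a configuration as in case (1) of part (ii) is $\GL[3]$-independent: after choosing coordinates so that $L_1 = \langle e_1 \rangle$, $L_2 = \langle e_2 \rangle$, $P_1 \cap P_2 = \langle e_3 \rangle$ (so $P_1 = \langle e_1, e_3 \rangle$, $P_2 = \langle e_2, e_3 \rangle$) and $L_3 = \langle v \rangle$ with all $v_i \neq 0$, one writes down for each of the five members an explicit invertible diagonal or triangular matrix fixing the other four and not it. Case (2) is then independent by duality, so $\indep[{\GL[3]}]\Sub(F^3) \geq 5$.

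For the upper bound and classification I would run a case analysis on the pair $(\ell, p)$ of numbers of lines and planes in an independent set $S$, using duality to reduce to $\ell \geq p$. \emph{Step (a): no five lines form an independent set.} I would first classify independent four-line sets as being either of type $\alpha$ (four lines, no three coplanar; joint stabilizer scalar) or of type $\beta$ (three coplanar lines $L_1, L_2, L_3 \subseteq Q$ together with a fourth line $L_4 \notin Q$; joint stabilizer the maps acting by a scalar $\mu$ on $Q$ and a scalar $\nu$ on $L_4$, with common invariant planes $Q$ and all planes containing $L_4$). Neither extends: for a five-line independent set every four-subset is independent, hence of type $\alpha$ or $\beta$; a type-$\alpha$ four-subset has scalar stabilizer, forcing the fifth line to be redundant, so all four-subsets are of type $\beta$, which forces $L_5 \notin Q$; but then $(L_4 + L_5) \cap Q$ is a single line, hence equals at most one of $L_1, L_2, L_3$, and non-redundancy of the others fails. \emph{Step (b): if $\ell = 4$ then $p = 0$.} In type $\alpha$, any plane is fixed by the scalar stabilizer; in type $\beta$, a plane $P$ is redundant over the four lines unless $P \neq Q$ and $L_4 \not\subseteq P$, and then non-redundancy of the coplanar lines forces $P \cap Q \in \{L_1, L_2, L_3\}$, again making one $L_i$ redundant. \emph{Step (c): no three lines and three planes form an independent set.} If the lines are coplanar in $Q$, non-redundancy of $L_i$ forces every plane $\neq Q$ to meet $Q$ in one of the other two lines, and the three conditions so obtained have empty intersection; if the lines are linearly independent, realize them as coordinate lines, note each plane is non-redundant over $\{L_1, L_2, L_3\}$ hence not a coordinate plane, and a short analysis of $D$ shows the three planes collectively force some plane to be redundant.

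Finally, for $(\ell, p) = (3, 2)$ I would show $S$ is independent iff it is a configuration as in case (1) of part (ii): the lines must be linearly independent (otherwise Step (c)'s coplanar argument applies), so take them as coordinate lines; each plane is non-redundant over $\{L_1, L_2, L_3\}$, hence not a coordinate plane, and cannot have all three $c_i$ nonzero either, since the resulting scalar constraint would make the other plane redundant; thus each plane has exactly two nonzero coordinates, i.e.\ contains exactly one of $L_1, L_2, L_3$. If both planes contained the same line they would be stabilized by the same subgroup of $D$ and one would be redundant; so they contain distinct lines, say $L_1 \subseteq P_1$ and $L_2 \subseteq P_2$, with $L_3$ in neither and $L_1 \not\subseteq P_2$, $L_2 \not\subseteq P_1$ — exactly case (1). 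Applying $\perp$ gives the $(2,3)$ case and case (2). Since, dually, $p \geq 5$ is impossible and $p = 4$ forces $\ell = 0$, Steps (a)–(c) yield $|S| \leq 5$ with equality only for the two listed configurations, which with the lower bound proves (i) and (ii). The main obstacle is the bookkeeping in Steps (a)–(c) together with the $(3, 2)$ classification: each "not independent" assertion requires pinning down the joint stabilizer of four of the five subspaces precisely enough to see the fifth is forced, and the small-field case $|F| = 3$ must be kept in mind at every construction.
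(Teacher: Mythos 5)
The proposal is correct, and while it rests on the same basic fact driving everything in the paper (the sharp $3$-transitivity of $\PGL[2]$ acting on $\bbP^1$, i.e.\ Lemma~\ref{LM:three-lines-fixed}), it is organized along a genuinely different line. The paper's workhorse is Lemma~\ref{LM:four-lines-fixed}, a statement about a single matrix stabilizing four lines, applied repeatedly inside the case analyses of Theorems~\ref{TH:all-lines} and \ref{TH:all-but-one-lines}; the $5$-element classification then comes from a $(3,2)$ analysis (Claims~1--3), and part~(i) is deduced from part~(ii) by examining $5$-element subsets of a hypothetical $6$-element independent set. You instead classify the $4$-line independent \emph{configurations} themselves into two types ($\alpha$: no three coplanar, joint stabilizer scalar; $\beta$: three coplanar lines in $Q$ plus one off-plane line, joint stabilizer a $2$-parameter group acting by a scalar on $Q$ and a scalar on the fourth line) and exploit the explicit shape of those joint stabilizers. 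This buys a more uniform case analysis by the pair $(\ell,p)$ of line/plane counts and lets you dispose of $(5,0)$, $(4,\geq 1)$, and notably $(3,3)$ directly, without passing through the classification in (ii) first; the stabilizer-theoretic description of the torus $D$ also makes the $(3,3)$ spanning case and the $(3,2)$ classification crisp. A few places are compressed and would need unpacking in a full write-up: in Step~(b), ``again making one $L_i$ redundant'' when $P\cap Q=L_i$ should be made explicit via $L_i=(L_j+L_k)\cap P$ and the $+,\cap$-closure lemma; and the ``short analysis of $D$'' in Step~(c) needs the two observations that a linear form with all three coordinates nonzero has scalar $D$-stabilizer (so its presence makes every \emph{other} plane redundant) and that if all three forms have exactly two nonzero coordinates, distinctness of the zero positions plus intersection of the stabilizer conditions forces some plane to be redundant. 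But the underlying arguments all check out, so this is a sound alternative proof.
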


In order to prove the theorem, we first establish some lemmas.

\begin{lem}\label{LM:no-sum}
	Let $S\subseteq \Sub(F^n)$ be a $\GL$-independent subset.
	Then no $V\in S$ may be obtained from the subspaces in $S-\{V\}$
	using the operations $+$ and $\cap$.
\end{lem}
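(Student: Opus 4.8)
The statement asserts that in a $\GL$-independent set $S \subseteq \Sub(F^n)$, no member $V$ can be expressed via repeated sums and intersections of the other members. The natural strategy is contrapositive: suppose $V \in S$ is built from subspaces in $S - \{V\}$ using $+$ and $\cap$. We want to show $S$ is $\GL$-dependent, i.e., there is some proper $S_0 \subsetneq S$ such that every $a \in \GL$ fixing all of $S_0$ also fixes all of $S$. The obvious candidate is $S_0 = S - \{V\}$: if $a \in \GL$ stabilizes every subspace in $S - \{V\}$, then since $a$ acts on $\Sub(F^n)$ by a lattice automorphism — it preserves both $+$ and $\cap$, as $a(W_1 + W_2) = a(W_1) + a(W_2)$ and $a(W_1 \cap W_2) = a(W_1) \cap a(W_2)$ for any invertible linear map — it follows that $a$ stabilizes any subspace obtained from $S - \{V\}$ by these operations, in particular $V$. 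Hence $a$ fixes all of $S$, contradicting $\GL$-independence (which requires, for this particular $V$, an element of $\GL$ fixing $S - \{V\}$ pointwise but not $V$).

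\textbf{Key steps in order.} First, record the elementary fact that for $a \in \GL$ and subspaces $W_1, W_2 \subseteq F^n$, one has $a(W_1 + W_2) = a(W_1) + a(W_2)$ and $a(W_1 \cap W_2) = a(W_1) \cap a(W_2)$; this is immediate from $a$ being a bijective linear map. Second, formalize "obtained from $S - \{V\}$ using $+$ and $\cap$" by induction on the structure of the expression (or on the number of operations used): the base case is $V \in S - \{V\}$, which is impossible since the elements of $S$ are distinct — actually the base case of the expression tree is a leaf labeled by some $W \in S - \{V\}$, and any $a$ fixing $S - \{V\}$ fixes $W$; the inductive step uses Step 1. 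Conclude that any $a \in \GL$ stabilizing each member of $S - \{V\}$ also stabilizes $V$. Third, invoke the definition of $\GL$-independence: since $S$ is $\GL$-independent, there exists $a \in \GL$ with $a(W) = W$ for all $W \in S - \{V\}$ but $a(V) \neq V$ — contradiction. Therefore no such $V$ exists.

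\textbf{Main obstacle.} Honestly, there is no serious obstacle here; the lemma is essentially a formal observation that the $\GL$-action on $\Sub(F^n)$ is by automorphisms of the subspace lattice, together with unwinding the definition of independence. The only point requiring a modicum of care is the induction on the build-up of $V$ from $S - \{V\}$: one should phrase the expression as a finite term built from elements of $S - \{V\}$ and the binary operations $+, \cap$, and prove by induction on term complexity that every $\GL$-element fixing the leaves fixes the value of the term. I would state this cleanly but not belabor it, since it is routine. One might also remark that the same proof works verbatim for any group $G$ acting linearly on $F^n$ (not just $\GL[n](F)$), and indeed for any action on a set $X$ equipped with partial operations that $G$ respects — but for the purposes of the paper the statement as given suffices.
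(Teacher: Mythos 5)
Your proof is correct and takes essentially the same approach as the paper's one-line argument: any $a\in\GL$ preserves $+$ and $\cap$, so stabilizing $S-\{V\}$ pointwise forces stabilizing $V$, contradicting $\GL$-independence. You merely spell out the induction on term complexity that the paper leaves implicit.
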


\begin{proof}
	If that were not the case, then every $a\in\GL$
	stabilizing all subspaces in $S-\{V\}$ would also stabilize $V$, contradicting our assumption
	that $S$ is $\GL$-independent.
\end{proof}

\begin{lem}\label{LM:three-lines-fixed}
	Let $V\subseteq F^n$ be a plane and let $a\in \GL$.
	If $a$ stabilizes three different  lines in $V$, then $a|_V=\alpha \id_V$
	for some $\alpha\in F$. In particular, $a$ stabilizes every line in $V$. 
\end{lem}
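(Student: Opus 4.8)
The plan is to reduce to a statement about eigenvectors. Since $a\in\GL$ stabilizes the plane $V$, the restriction $a|_V$ is a well-defined element of $\GL(V)\cong\GL[2](F)$. A line $L\subseteq V$ stabilized by $a$ is precisely an eigenvector (eigenline) of $a|_V$. So the claim is: if an element of $\GL[2](F)$ has three distinct eigenlines, it is scalar.

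First I would argue by contradiction: suppose $a|_V$ is not a scalar multiple of $\id_V$. A non-scalar $2\times 2$ matrix has at most $2$ distinct eigenvalues, and for each eigenvalue $\lambda$ the eigenspace $\ker(a|_V-\lambda\,\id_V)$ has dimension at most $1$ (if it had dimension $2$, then $a|_V=\lambda\,\id_V$, contradicting non-scalarity). Hence $a|_V$ has at most $2$ distinct eigenlines, contradicting the hypothesis that it stabilizes three different lines in $V$. Therefore $a|_V=\alpha\,\id_V$ for some $\alpha\in F$, and then every line (indeed every subspace) of $V$ is $a$-invariant. One subtle point to verify: the three stabilized lines are genuinely eigenlines, i.e. $a(L)=L$ forces $a|_V$ to act on the $1$-dimensional space $L$ by a scalar — this is immediate since any linear endomorphism of a $1$-dimensional space is multiplication by a scalar.

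The only real obstacle is making sure the eigenvalue-counting argument is airtight without invoking algebraic closure of $F$: the characteristic polynomial of $a|_V$ has degree $2$, so it has at most $2$ roots in $F$ regardless of whether it splits, and distinct eigenlines have distinct eigenvalues when the eigenspaces are one-dimensional — which they are in the non-scalar case. So the bound "at most $2$ eigenlines" holds over any field. I do not expect any difficulty here; the lemma is genuinely elementary linear algebra and the proof is a few lines.

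A clean way to present it: let $L_1,L_2,L_3\subseteq V$ be three distinct lines with $a(L_i)=L_i$, pick $0\neq v_i\in L_i$, so $a v_i=\lambda_i v_i$ for some $\lambda_i\in\units{F}$. Since any two of the $L_i$ span $V$ (being distinct lines in a plane), $\{v_1,v_2\}$ is a basis. Write $v_3=\mu_1 v_1+\mu_2 v_2$ with $\mu_1,\mu_2\neq 0$ (nonzero because $L_3\neq L_1,L_2$). Applying $a$: $\lambda_3(\mu_1 v_1+\mu_2 v_2)=\mu_1\lambda_1 v_1+\mu_2\lambda_2 v_2$, so $\lambda_3=\lambda_1=\lambda_2=:\alpha$. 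Thus $a$ agrees with $\alpha\,\id_V$ on the basis $\{v_1,v_2\}$, hence $a|_V=\alpha\,\id_V$, and so $a$ stabilizes every line in $V$.
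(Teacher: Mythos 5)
Your proof is correct. It takes a genuinely more elementary route than the paper's: the paper disposes of this lemma in one line by identifying $V$ with $F^2$ and invoking sharp $3$-transitivity of $\mathrm{PGL}_2(F)$ on $\bbP^1$ (a fact it has already recorded in the preceding section) — if the image of $a|_V$ in $\mathrm{PGL}_2(F)$ fixes three points of $\bbP^1$, sharp $3$-transitivity forces it to be trivial, i.e.\ $a|_V$ is scalar. You instead give a direct eigenvector computation: pick spanning vectors $v_1,v_2,v_3$ of the three lines, use that $v_1,v_2$ is a basis of $V$ and that $v_3=\mu_1 v_1+\mu_2 v_2$ with $\mu_1,\mu_2\neq 0$, and read off $\lambda_1=\lambda_2=\lambda_3$ by comparing coordinates. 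Your argument is self-contained and makes no appeal to the M\"obius-transformation machinery; the paper's is shorter because it reuses a fact already on hand and needed elsewhere. Both are valid over any field. One minor remark: your preliminary contradiction argument via eigenvalue-counting is dispensable once you have the explicit computation at the end, which is the cleaner way to present it — you correctly noted as much.
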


\begin{proof}
	Once identifying $V$ with $F^2$, this follows from the fact
	that the action of $\nPGL{F}{2}$ on $\bbP^1$ is sharply $3$-transitive; see
	Section~\ref{sec:GL-indep}.
\end{proof}

\begin{lem}\label{LM:four-lines-fixed}
	Let $a\in \M[3]$.
	If $a$ stabilizes $4$ different lines in $F^3$, then either $3$ of them are contained in a common plane,
	or $a$ is scalar.
\end{lem}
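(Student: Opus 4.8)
The plan is to analyze the eigenspace structure of $a$ acting on $F^3$. Suppose $a\in\M[3]$ stabilizes four distinct lines $L_1,L_2,L_3,L_4$. Each $L_i$ is an eigenline of $a$, so let $\lambda_i$ be the corresponding eigenvalue. First I would dispose of the case where $a$ has a repeated eigenvalue: if some eigenvalue $\mu$ has a two-dimensional eigenspace $W$, then any eigenline contained in $W$ has eigenvalue $\mu$; if three of the $L_i$ lie in $W$ we are done (they share the common plane $W$), and otherwise at most two lie in $W$, so at least two of them lie outside $W$ with distinct eigenvalues, forcing $a$ to be diagonalizable with an eigenbasis and $W$ the $\mu$-eigenspace. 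More directly, the cleanest route is to split on whether the $\lambda_i$ are all equal.

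\textbf{Case 1: all $\lambda_i$ equal to some $\lambda$.} Then $L_1,\dots,L_4$ all lie in $\ker(a-\lambda I)$. If this kernel is all of $F^3$, then $a=\lambda I$ is scalar. If it is a plane, all four lines lie in a common plane, so in particular three of them do. If it is a line, the $L_i$ cannot be four distinct lines. So Case 1 gives the conclusion.

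\textbf{Case 2: not all $\lambda_i$ are equal.} After reordering, say $\lambda_1\neq\lambda_2$. Then $L_1\cap L_2=0$ and $L_1+L_2$ is a plane $Q$. For each remaining index $j\in\{3,4\}$, consider whether $L_j\subseteq Q$. If both $L_3$ and $L_4$ lie in $Q$, then $L_1,L_2,L_3$ (say) all lie in the common plane $Q$ and we are done. Otherwise at least one of them, say $L_3$, is not in $Q$; then $F^3=L_1\oplus L_2\oplus L_3$ and $\{L_1,L_2,L_3\}$ provides an eigenbasis, so $a$ is diagonalizable with distinct-or-not eigenvalues $\lambda_1,\lambda_2,\lambda_3$ along these three lines. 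Now I would locate $L_4$: it is an eigenline of $a$, so writing a generator $v$ of $L_4$ in the eigenbasis, $av=\lambda_4 v$ forces $v$ to be supported only on those coordinates whose eigenvalue equals $\lambda_4$. If all three of $\lambda_1,\lambda_2,\lambda_3$ are distinct, the only eigenlines are $L_1,L_2,L_3$ themselves, contradicting $L_4$ being a fourth distinct line. Hence two of them coincide, say $\lambda_i=\lambda_k$ with $\{i,k\}\subseteq\{1,2,3\}$; then the $\lambda_i$-eigenspace is the plane $L_i+L_k$, which contains $L_4$ (since any eigenline with that eigenvalue lies in it). So $L_i,L_k,L_4$ lie in the common plane $L_i+L_k$, giving three of the four lines in a common plane.

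The main obstacle, such as it is, is just the bookkeeping of cases: ensuring that "four distinct eigenlines" is correctly exploited at each branch and that the degenerate subcases (repeated eigenvalues, eigenlines forced to coincide) are handled without gaps. There is nothing deep here — it reduces to the elementary linear algebra of eigenspace decompositions in dimension $3$ — but I would be careful that when I claim "$L_4$ lies in the $\lambda_i$-eigenspace" I am using that $L_4$ is genuinely an eigenline (which it is, being stabilized by $a$) and that the relevant eigenspace is exactly the span of the eigenlines sharing that eigenvalue, which holds because $a$ is diagonalizable in Case 2 once we have the eigenbasis $\{L_1,L_2,L_3\}$.
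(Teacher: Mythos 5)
Your proof is correct and rests on the same core observation as the paper's: pick three of the four lines that span $F^3$, write a generator of the fourth in terms of the resulting eigenbasis, and deduce from the eigenvalue equation that either all eigenvalues coincide (so $a$ is scalar) or the fourth eigenvector is supported on at most two basis vectors (so three of the lines share a plane). The paper compresses your case split into the single disjunction $((\delta=\alpha)\lor(x=0))\land((\delta=\beta)\lor(y=0))\land((\delta=\gamma)\lor(z=0))$ rather than branching on eigenvalue multiplicities, but the content is the same.
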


\begin{proof}
	Let $v_1,v_2,v_3,v_4$ be vectors spanning the $4$ lines
	which are invariant under $a$,
	and let $\alpha,\beta,\gamma,\delta\in F$ be their corresponding eigenvalues.
	We may assume without loss of
generality that $v_{1},v_{2},v_{3}$ span $F^{3}$ and so $v_{4}$ may
be represented as $v_{4}=xv_{1}+yv_{2}+zv_{3}$
for some $x,y,z\in F$.
Then 
\[x\delta v_1+y\delta v_2+z\delta v_3 =
\delta v_4=av_{4}=a(xv_{1}+y v_{2}+zv_{3})=x\alpha v_{1}+y \beta  v_{2}+z\gamma  v_{3},\]
hence 
$((\delta=\alpha)\lor(x=0))\land((\delta=\beta)\lor(y=0))\land((\delta=\gamma)\lor(z=0))$.
If $\delta=\alpha=\beta=\gamma$, then $a$ is a scalar matrix.
On the other hand, if
$x=0$ or $y=0$ or $z=0$, then $v_{4}$ is a linear combination
of two   of the three vectors $v_1,v_2,v_3$.
Therefore, three of the lines spanned
by  $v_1,v_2,v_3,v_4$ are contained in a plane.
\end{proof}

\begin{lem}\label{LM:perp-inv}
	Let $V\subseteq F^n$ be a subspace and $a\in \M$.
	Then $a(V)\subseteq V$ if and only if $a^\trans(V^\perp)\subseteq V^\perp$.
\end{lem}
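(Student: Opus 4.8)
The plan is to use only two elementary facts about the standard pairing $\Trings{\cdot,\cdot}$ on $F^n$: first, the adjunction identity $\Trings{av,w}=\Trings{v,a^\trans w}$ for all $v,w\in F^n$ and $a\in\M$, which is immediate from the definition of matrix transpose; second, that the pairing is non-degenerate, so that $(V^\perp)^\perp=V$ for every subspace $V\subseteq F^n$.

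First I would prove the forward implication. Assume $a(V)\subseteq V$ and let $w\in V^\perp$; I must show $a^\trans w\in V^\perp$, i.e.\ $\Trings{v,a^\trans w}=0$ for every $v\in V$. By adjunction, $\Trings{v,a^\trans w}=\Trings{av,w}$, and since $av\in V$ by hypothesis while $w\in V^\perp$, this vanishes. Hence $a^\trans(V^\perp)\subseteq V^\perp$.

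For the converse, I would simply apply the forward implication with $a$ replaced by $a^\trans$ and $V$ replaced by $V^\perp$: if $a^\trans(V^\perp)\subseteq V^\perp$, then $(a^\trans)^\trans\big((V^\perp)^\perp\big)\subseteq (V^\perp)^\perp$, which is exactly $a(V)\subseteq V$ after using $(a^\trans)^\trans=a$ and $(V^\perp)^\perp=V$.

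There is no real obstacle here; the only point worth stating explicitly is that the pairing used in the paper is non-degenerate (it is the standard dot product), so that $(V^\perp)^\perp=V$ holds for all subspaces, including when $V$ is not a coordinate subspace. This is what makes the symmetric argument for the converse legitimate without a separate dimension count.
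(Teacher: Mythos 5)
Your proof is correct and follows essentially the same route as the paper: prove one direction by adjunction, then get the converse by the symmetry of swapping $(a,V)$ with $(a^\trans, V^\perp)$. You merely spell out what the paper's terse ``the converse follows by symmetry'' rests on, namely $(a^\trans)^\trans=a$ and $(V^\perp)^\perp=V$ (the latter from non-degeneracy of the standard pairing), which is a reasonable amount of extra detail but not a different argument.
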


\begin{proof}
	Suppose that $a(V)\subseteq V$ and let $u\in V^\perp$.
	Then for every $v\in V$, we have $\Trings{a^\trans u,v}=\Trings{u,av}\in \Trings{u,V}=0$,
	so $a^\trans u\in V^\perp$. This proves that $a^\trans(V^\perp)\subseteq V^\perp$.
	The converse follows by symmetry.
\end{proof}

\begin{cor}\label{CR:dim-flip}
	Let $S\subseteq \Sub(F^n)$. Then $S$ is $\GL$-independent if and only if
	$S^\perp:=\{V^\perp\where V\in S\}$ is $\GL$-independent.
\end{cor}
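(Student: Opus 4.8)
The plan is to reduce everything to Lemma~\ref{LM:perp-inv}, using two elementary facts: $V\mapsto V^\perp$ is an involutive bijection of $\Sub(F^n)$ onto itself (so $|S^\perp|=|S|$ and $(S^\perp)^\perp=S$), and $a\mapsto a^\trans$ is a bijection of $\GL$ onto itself with $(a^\trans)^\trans=a$. Because the two implications of the corollary turn into each other upon replacing $S$ by $S^\perp$ and $a$ by $a^\trans$, it suffices to prove one direction, say that $\GL$-independence of $S$ implies $\GL$-independence of $S^\perp$.

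So I would start from a $\GL$-independent set $S$, fix $V\in S$, and look for $b\in\GL$ stabilizing every member of $S^\perp-\{V^\perp\}$ but not $V^\perp$. By hypothesis there is $a\in\GL$ fixing every $U\in S-\{V\}$ and not fixing $V$; I would simply take $b:=a^\trans\in\GL$. For each $U\in S-\{V\}$, Lemma~\ref{LM:perp-inv} applied to $a$ and $U$ gives $a^\trans(U^\perp)\subseteq U^\perp$, which for the invertible $a^\trans$ is the same as $a^\trans(U^\perp)=U^\perp$; and as $U$ runs over $S-\{V\}$ its orthogonal complement runs over exactly $S^\perp-\{V^\perp\}$, since $W\mapsto W^\perp$ is injective and $V\notin S-\{V\}$. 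Finally, if $a^\trans$ stabilized $V^\perp$, then Lemma~\ref{LM:perp-inv} applied to $a^\trans$ and $V^\perp$, together with $(a^\trans)^\trans=a$ and $(V^\perp)^\perp=V$, would force $a(V)\subseteq V$, hence $a(V)=V$, contradicting the choice of $a$. Thus $b=a^\trans$ witnesses the independence condition at $V^\perp$, so $S^\perp$ is $\GL$-independent; applying this with $S^\perp$ in place of $S$ and invoking $(S^\perp)^\perp=S$ gives the reverse implication.

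I do not expect a genuine obstacle here: the argument is a one-line transport of structure through Lemma~\ref{LM:perp-inv}. The only point warranting a careful word is the standard remark that an element of $\GL$ which maps a finite-dimensional subspace into itself in fact maps it \emph{onto} itself, so that ``$a(W)\subseteq W$'' and ``$a$ stabilizes $W$'' coincide throughout the argument; beyond that, everything is bookkeeping with the two bijections $\perp$ and transpose.
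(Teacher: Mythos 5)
Your argument is correct and is essentially the same as the paper's: take the witnessing $a$ for $V\in S$, pass to $a^\trans$, and invoke Lemma~\ref{LM:perp-inv} in both directions, closing the converse by the symmetry of the $\perp$ and transpose operations. The extra care you take (noting that $\perp$ is an involutive bijection, that $a(W)\subseteq W$ with $a$ invertible and $\dim W<\infty$ forces equality) is just making explicit what the paper's one-paragraph proof leaves implicit.
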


\begin{proof}
	Suppose that $S$ is $\GL$-independent. Then for every $V\in S$,
	there is $a_V\in \GL$ such that $V$ is not $a_V$-invariant, but
	every $U\in S-\{V\}$ is $a_V$-invariant.
	By Lemma~\ref{LM:perp-inv}, $V^\perp$ is not $a_V^\trans$-invariant
	while $U^\perp$ is $a_V^\perp$-invariant for every $U\in S-\{V\}$.
	This means that $S^\perp$ is $\GL$-independent.
	The other direction follows by symmetry.
\end{proof}

We    now prove some variants of Theorem~\ref{TH:GL-indep-3}(i) in which
the dimensions of the subspaces in the $\GL[3]$-independent subset of $\Sub(F^3)$ are restricted.
They may be interesting in their own right.

\begin{thm}\label{TH:all-lines}
	Let $S$ be a $\GL[3]$-independent subset of $\Sub(F^3)$
	consisting only of lines (resp.\ planes).
	Then $|S|\leq 4$.
\end{thm}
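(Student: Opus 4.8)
By Corollary~\ref{CR:dim-flip}, the ``planes'' case follows from the ``lines'' case, so I would only treat a $\GL[3]$-independent set $S$ consisting of lines and show $|S|\le 4$. The plan is to argue by contradiction: suppose $S=\{L_1,\dots,L_5\}$ (or more) consists of distinct lines in $F^3$ and is $\GL[3]$-independent. The two basic tools are Lemma~\ref{LM:no-sum} (no line in $S$ is a sum/intersection of the others) and Lemma~\ref{LM:four-lines-fixed} (if $a\in\M[3]$ stabilizes four lines, then either $a$ is scalar or three of those lines are coplanar). First I would record the consequence of Lemma~\ref{LM:no-sum} in this setting: no three lines of $S$ can be coplanar, because if $L_i,L_j,L_k$ span a common plane $P$, then $L_k\subseteq P = L_i+L_j$, contradicting Lemma~\ref{LM:no-sum} (a plane is the sum of any two of its distinct lines). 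So in fact \emph{every} three lines of $S$ are in general position, i.e.\ span $F^3$.

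Now fix $L_5\in S$ and use $\GL[3]$-independence to pick $a\in\GL[3]$ that stabilizes $L_1,L_2,L_3,L_4$ but not $L_5$. Since $a$ stabilizes four lines, Lemma~\ref{LM:four-lines-fixed} says either $a$ is scalar — impossible, since a scalar matrix stabilizes every line, in particular $L_5$ — or three of $L_1,L_2,L_3,L_4$ are coplanar, which we just ruled out. This is the contradiction, so $|S|\le 4$. The argument is short, and the only real content is the observation that $\GL$-independence of a set of lines forces pairwise-triples to be in general position (via Lemma~\ref{LM:no-sum}); after that, Lemma~\ref{LM:four-lines-fixed} does the work. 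I expect no serious obstacle here; the delicate points are purely bookkeeping — checking that ``three coplanar lines'' really is an instance of the forbidden configuration in Lemma~\ref{LM:no-sum}, and that the scalar case is genuinely excluded because $S$ contains a fifth line distinct from the others. (If one wanted $|S|\le 4$ stated for sets of size exactly $5$, the same contradiction applies verbatim to any set of $\ge 5$ lines by restricting attention to five of them.)
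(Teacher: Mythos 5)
Your reduction of the planes case to the lines case via Corollary~\ref{CR:dim-flip} is fine, but the main argument contains a genuine gap that makes it collapse. You assert that if $L_i,L_j,L_k\in S$ are coplanar then $L_k$ is ``obtained'' from $L_i,L_j$ via $+$ and $\cap$, violating Lemma~\ref{LM:no-sum}. That is not what Lemma~\ref{LM:no-sum} forbids: it says no element of $S$ may \emph{equal} an expression built from the others with $+$ and $\cap$. If $L_i,L_j,L_k$ are three distinct coplanar lines, then $L_k$ is \emph{contained in} $L_i+L_j$, but it equals neither $L_i+L_j$, nor $L_i$, nor $L_j$, nor $L_i\cap L_j=0$; there is no way to produce $L_k$ from $\{L_i,L_j\}$ using $+$ and $\cap$. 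So your claimed corollary ``every three lines of $S$ span $F^3$'' does not follow, and is in fact false: the four lines $Fe_1,\ Fe_2,\ F(e_1+e_2),\ Fe_3$ form a $\GL[3]$-independent set (this is the configuration in Remark~\ref{RM:four-lines-indep}) in which the first three are coplanar. Indeed, the paper's own proof \emph{begins} by applying Lemma~\ref{LM:four-lines-fixed} to conclude that, since $a_5$ is not scalar, three of $L_1,\dots,L_4$ \emph{must} be coplanar.

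Once the ``general position'' claim fails, your one-step contradiction via Lemma~\ref{LM:four-lines-fixed} evaporates: the conclusion of that lemma (either $a$ is scalar or three fixed lines are coplanar) is no longer an absurdity. What the paper does instead is accept that three lines, say $L_1,L_2,L_3$, lie in a common plane, then applies Lemma~\ref{LM:four-lines-fixed} a second time to $a_3$ (which fixes $L_1,L_2,L_4,L_5$) and runs a case analysis on which triple from $\{L_1,L_2,L_4,L_5\}$ is coplanar. Each case is closed with Lemma~\ref{LM:three-lines-fixed}: once a non-scalar matrix fixes three distinct lines in a plane it is forced to be scalar on that plane and hence to fix a fourth line it was supposed to move. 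The interaction of Lemma~\ref{LM:four-lines-fixed} with Lemma~\ref{LM:three-lines-fixed}, not Lemma~\ref{LM:no-sum}, is what does the real work here; you will need some version of that case analysis to make the argument go through.
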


\begin{proof}
	By Corollary~\ref{CR:dim-flip}, we map replace $S$ with $S^\perp$, so it is enough to address the case where $S$ consists of lines.
	For the sake of contradiction, suppose that $|S|>4$  and let
	$L_1,\dots,L_5$ be $5$ distinct lines in $S$.
	Then for every $i\in\{1,\dots,5\}$, there is $a_i\in \GL[3]$
	stabilizing $L_j$ for each $j\neq i$ and not stabilizing $L_i$.
	Clearly, $a_i$ is not scalar.

	The matrix $a_{5}$ stabilizes the lines $L_{1},L_{2},L_{3},L_{4}$.
	By Lemma~\ref{LM:four-lines-fixed}, three of
them are contained in a plane, and after renumbering,
we may assume that   those
are $L_{1},L_{2},L_{3}$. Now look at $a_{3}$, which stabilizes $L_{1},L_{2},L_{4},L_{5}$.
Applying Lemma~\ref{LM:four-lines-fixed} again,
we see that three of these lines  are contained in a plane.
We now break into cases.

{\it Case I.  $L_{1},L_{2},L_{i}$  are contained in a plane for some $i\in\{4,5\}$.}
Then $L_{1},L_{2},L_{3},L_{i}$ are contained in $L_{1}+L_{2}$.
The matrix $a_3$ stabilizes $L_1$, $L_2$ and $L_i$,
so by  Lemma~\ref{LM:three-lines-fixed}, it must also stabilize $L_3$,
which is a contradiction. 

{\it Case II. $L_{i},L_{4},L_{5}$
are contained
in a plane for some
$i\in\{1,2\}$.} Let $j$ be the member of $\{1,2\}$ different
from $i$. 
Then $L_{i}\subseteq L_{j}+L_{3}$ and $L_i\subseteq L_{4}+ L_{5}$.
Furthermore,   $a_{i}$ stabilizes $L_{j},L_{3},L_{4},L_{5}$,
so by Lemma~\ref{LM:four-lines-fixed}, three of these lines are contained in a plane. 
Since every choice of three lines from $L_{j},L_{3},L_{4},L_{5}$
will include either $L_{j},L_{3}$ or $L_{4},L_{5}$, either
$L_j+L_3$ contains $L_{i}$ and one of $L_4,L_5$,
or $L_4+L_5$
contains $L_i$ and one of $L_j,L_3$.
It follows that four of the lines $L_1,\dots,L_5$ are contained in a plane.
Renumbering, we may assume that  $L_{1},L_{2},L_{3},L_{4}$
are contained in a plane and reach a contradiction as in Case I.
\end{proof}

\begin{remark}\label{RM:four-lines-indep}
	Suppose $|F|>2$.
	With a little more work, it is not hard
	to show that a set of $4$ lines in $F^3$
	is $\GL[3]$-independent if and only if the lines are not contained
	in a plane. 	
	Indeed, if $L_1,L_2,L_3,L_4$ are four lines in $F^3$
	not contained in a plane, then after renumbering,
	we have  $L_1+L_2+L_3=F^3$. Since $(L_1+L_2)\cap (L_2+L_3)\cap (L_3+L_1)=\emptyset$,
	there are distinct $i,j\in\{1,2,3\}$ such that $L_4\nsubseteq L_i+L_j$.
	Renumbering again, we may assume $L_4\nsubseteq L_2+L_3$.
	Now, one can check directly that $\{L_1,\dots,L_4\}$ is $\GL[3]$-independent, e.g.,
	for any $\alpha\in F-\{0,1\}$, the matrix
	$a_4:=\id_{L_1}\oplus \alpha \id_{L_2\oplus L_3}$  
	stabilizes $L_1,L_2,L_3$
	and not $L_4$. 
\end{remark}

\begin{thm}\label{TH:all-but-one-lines}
	Let $S$ be a $\GL[3]$-independent subset of $\Sub(F^3)$
	such that all subspaces in $S$
	except maybe one   have the same dimension.
	Then $|S|\leq 4$.
\end{thm}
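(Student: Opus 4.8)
The plan is to cut down to one concrete configuration and then rule it out using the matrices witnessing independence. Since the only nonzero proper subspaces of $F^3$ are lines and planes, the hypothesis forces $S$ to have one of four shapes: all lines, all planes, lines together with one plane, or planes together with one line. The first two shapes are exactly the content of Theorem~\ref{TH:all-lines}, which already gives $|S|\le 4$; and by Corollary~\ref{CR:dim-flip} the ``planes plus one line'' case is dual to the ``lines plus one plane'' case, so it suffices to treat the latter. Next I would observe two cheap facts: a subset of a $\GL$-independent set is $\GL$-independent (the matrix witnessing independence for an element of the big set still works in the subset), and the ``all but one of the same dimension'' property is inherited by subsets. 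Hence it is enough to show that no $5$-element $\GL[3]$-independent subset of $\Sub(F^3)$ consists of four lines $L_1,L_2,L_3,L_4$ together with one plane $P$.

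So suppose $S=\{P,L_1,L_2,L_3,L_4\}$ is such a set. For each $i$ choose $a_i\in\GL[3]$ stabilizing every member of $S$ except $L_i$, and choose $a_P\in\GL[3]$ stabilizing $L_1,L_2,L_3,L_4$ but not $P$; all of these are non-scalar. Applying Lemma~\ref{LM:four-lines-fixed} to $a_P$, three of the four lines lie in a common plane; after renumbering, $L_1,L_2,L_3\subseteq Q$ for some plane $Q$, and since $L_1,L_2,L_3$ are distinct lines of $Q$ we have $Q=L_i+L_j$ for all distinct $i,j\in\{1,2,3\}$. If $Q=P$, then $P=L_1+L_2$ is built from members of $S-\{P\}$ using $+$, contradicting Lemma~\ref{LM:no-sum}; therefore $Q\ne P$, and $\ell:=P\cap Q$ is a line.

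Now fix $i\in\{1,2,3\}$ and put $\{j,k\}=\{1,2,3\}-\{i\}$. Then $a_i$ stabilizes $L_j,L_k$ and $P$; since $L_j,L_k$ are distinct lines of $Q$, $a_i$ stabilizes $Q=L_j+L_k$, and hence also $\ell=P\cap Q$ (using invertibility of $a_i$). Because $a_i$ fixes the two transverse lines $L_j$ and $L_k$, the restriction $a_i|_Q$ is diagonalizable; it cannot be scalar, for then $a_i$ would fix every line of $Q$, in particular $L_i\subseteq Q$. Thus $a_i|_Q$ has two distinct eigenvalues and its only invariant lines in $Q$ are $L_j$ and $L_k$, so $\ell\in\{L_j,L_k\}$. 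Running this for $i=1,2,3$ yields $\ell\in\{L_2,L_3\}$, $\ell\in\{L_1,L_3\}$ and $\ell\in\{L_1,L_2\}$ at once, which is impossible since $L_1,L_2,L_3$ are pairwise distinct. This contradiction finishes the argument.

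I expect the only genuinely non-routine point to be locating the invariant line $\ell=P\cap Q$: it is tempting instead to split into subcases according to whether $L_4\subseteq Q$ or not, which leads to a messier analysis, whereas tracking $P\cap Q$ through the three matrices $a_1,a_2,a_3$ collapses everything into the single empty-intersection contradiction above. The remaining ingredients --- that subsets of $\GL$-independent sets are $\GL$-independent, that $a_i(P\cap Q)=a_i(P)\cap a_i(Q)=P\cap Q$, and the elementary linear algebra of a diagonalizable $2\times 2$ matrix with distinct eigenvalues having exactly two invariant lines --- are straightforward and I would verify them in passing.
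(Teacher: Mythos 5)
Your argument is correct, and it takes a genuinely different route than the paper's proof of Theorem~\ref{TH:all-but-one-lines}. Both proofs begin the same way: reduce by duality (Corollary~\ref{CR:dim-flip}) to the ``four lines and one plane'' configuration, invoke Theorem~\ref{TH:all-lines} for the pure cases, and apply Lemma~\ref{LM:four-lines-fixed} to the matrix $a_P$ to force three of the lines into a common plane $Q$. From there, however, the paper splits into three sub-cases depending on whether some $L_i$ lies in $P$ and whether $L_2,L_3,L_4$ are coplanar, in each case applying Lemma~\ref{LM:three-lines-fixed} to a single well-chosen matrix. You instead track the single line $\ell = P\cap Q$ through all three matrices $a_1,a_2,a_3$: since $a_i$ stabilizes $L_j,L_k$ and $P$, it stabilizes $Q=L_j+L_k$ and hence $\ell$; since $a_i\vert_Q$ cannot be scalar (else it would stabilize $L_i\subseteq Q$), its only invariant lines in $Q$ are $L_j$ and $L_k$, so $\ell\in\{L_j,L_k\}$; running $i=1,2,3$ gives three constraints on $\ell$ with empty intersection. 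This collapses the paper's case analysis into one argument. The price is a small replacement of Lemma~\ref{LM:three-lines-fixed} by the elementary observation that a non-scalar diagonalizable operator on a $2$-dimensional space has exactly two invariant lines, which is easy and which you correctly flag as needing verification. All the small supporting facts you cite (subsets of $\GL$-independent sets are $\GL$-independent; $a(P\cap Q)=a(P)\cap a(Q)$ for invertible $a$; $Q\ne P$ by Lemma~\ref{LM:no-sum}) check out. This is a clean and slightly shorter proof of the same result.
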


\begin{proof}
	By Corollary~\ref{CR:dim-flip}, we may replace $S$ with $S^\perp$ 
	and hence assume that all members of $S$
	are lines except maybe one, which is a plane. The case
	where $S$ consists entirely of lines was treated in Theorem~\ref{TH:all-lines},
	so we may assume that $S$ contains a plane $P_1$.
	For the sake of contradiction, suppose that $S$ contains $4$ additional lines
	$L_1,\dots,L_4$.
	We split into cases.
	
	{\it Case I. None of the lines $L_{1},L_{2},L_{3},L_{4}$ is contained in $P_1$.}
	There is a matrix that stabilizes $L_{1},L_{2},L_{3},L_{4}$
and not $P_1$. By Lemma~\ref{LM:four-lines-fixed}, three of $L_{1},L_{2},L_{3},L_{4}$ are contained in a plane $P_2$, and without loss of generality, we may  assume that those are $L_{1},L_{2},L_{3}$.
In particular, $P_2=L_2+L_3$.
There is $a_1\in \GL[3] $ that stabilizes $L_{2},L_{3},L_{4},P_{1}$ and not
$L_{1}$. Let   $L_{5}=P_{1}\cap P_{2}$. We have $P_1\neq L_2+L_3=P_2$
by Lemma~\ref{LM:no-sum}, so $L_5$ is a line.
We also have $L_5\neq L_2,L_3$, because $L_2,L_3$ are not contained in $P_1$.
Thus, $a_1$ stabilizes three lines in $P_2$, namely,  $L_{2},L_{3},L_{5}$.
By Lemma~\ref{LM:three-lines-fixed}, $a_1$ stabilizes every line in $P_{2}$.
But this means that $a_1$ stabilizes $L_1$, a contradiction.

	{\it Case II. One of   $L_{1},L_{2},L_{3},L_{4}$ is contained in $P_{1}$
and the other three are contained in a   plane. }
	We may assume without
loss of generality that $L_{1}\subseteq P_{1}$, and $P_2:=L_2+L_3+L_4$ is a plane.
If $L_{i}\subseteq P_{1}$ for some $i\in\{2,3,4\}$,
then we would have $P_1=L_1+L_i$, which is impossible by Lemma~\ref{LM:no-sum}.
Thus, $L_2,L_3,L_4\nsubseteq P_1$.
There is  $a_4\in \GL[3]$ stabilizing
$P_{1},L_{1},L_{2},L_{3}$ and not stabilizing $L_{4}$;
it also stabilizes $P_2=L_2+L_3$.
We  have $P_2\neq P_1$ by Lemma~\ref{LM:no-sum}, so
$L_5:=P_1\cap P_2$ is a line stabilized by $a_4$. 
Note also that $L_5\neq L_2,L_3$ because $L_2,L_3\nsubseteq P_1$
while $L_5\subseteq P_1$.
It follows that $a_4$ stabilizes $3$ lines in $P_2$, namely, $L_2,L_3,L_5$.
Thus, by Lemma~\ref{LM:three-lines-fixed}, $a_4$ stabilizes all lines in $P_2$
including $L_4$, but that is a contradiction.

{\it Case III. One of the lines $L_{1},L_{2},L_{3},L_{4}$ is contained in $P_{1}$
and the other three span $F^3$.}
Again, without loss of generality, we may assume that $L_1\subseteq P_1$
and $L_2+L_3+L_4=F^3$.
There is $a_1\in \GL[3]$ stablizing $L_{2},L_{3},L_{4},P_{1}$ and not $L_1$. 
For every $2\leq i<j\leq 4$, let
$L_{ij}=(L_i+L_j)\cap P_1$. Then $a_1(L_{ij})=L_{ij}$.
As in Case II, $L_{i}\nsubseteq P_{1}$
for $i=2,3,4$,  so $L_{i}\ne L_{jk}$ for $2\leq j<k\leq 4$. 
As a result, $L_j+L_{k}=L_j+L_{jk}=L_k+L_{kj}$. 
Now, if $L_{23}= L_{24}$,
then 
we would have $L_2+L_3=L_2+L_{23}=L_2+L_{24}=L_2+L_4$,
contradicting our assumption $L_2+L_3+L_4=F^3$.
We must therefore have $L_{23}\neq L_{24}$,
and similarly $L_{23}\neq L_{34}$ and $L_{24}\neq L_{34}$.
Now, $L_{23}, L_{34}, L_{34}$
are distinct lines in $P_1$ that are stabilized under $a_1$.
By Lemma~\ref{LM:three-lines-fixed}, $a_1$
stabilizes every line in $P_1$, and in particular $a_1(L_1)=L_1$.
This contradicts the choice of $a_1$.
\end{proof}

We can now prove Theorem~\ref{TH:GL-indep-3}. We first prove (ii) and then (i).

\begin{proof}[Proof of the ``only if'' part of Theorem~\ref{TH:GL-indep-3}(ii).]
Let $S$ be $\GL[3]$-independent  $5$-elem\-ent subset of $\Sub(F^3)$.
We need to prove that one of the conditions (1), (2) from Theorem~\ref{TH:GL-indep-3}(ii)
holds. Note that $S$ cannot include $0$ and $F^3$, and by Theorem~\ref{TH:all-but-one-lines},
it includes at least $2$ lines and at least $2$ planes. 
Since $S^\perp$ is also $\GL[3]$-independent (Lemma~\ref{LM:perp-inv}),
and since (1) holds for $S$ if and only (2) holds for $S^\perp$, we may
replace $S$ with $S^\perp$ if needed and assume that $S$ consists of $3$ lines and $2$ planes.
Denote the lines by $L_1,L_2,L_3$ and the planes by $P_1,P_2$.
We will prove   (1) in a series of claims.

{\it Claim 1.  No  $L_i$ ($i=1,2,3$) is contained in both $P_1$ and $P_2$,
and each $P_i$ ($i=1,2$) contains at most one of $L_1,L_2,L_3$.} 
Indeed, if that were not the case, then we would have $L_i=P_1\cap P_2$,
or $P_i=L_j+L_k$ for some $i,k,j$, but this is impossible by Lemma~\ref{LM:no-sum}. 

{\it Claim 2. $L_1+L_2+L_3=F^3$.}
For the sake contradiction, suppose otherwise, i.e.,
$L_1+L_2=L_1+L_3=L_2+L_3$.
There is $a_3\in \GL[3]$ stabilizing $L_1,L_2,P_1,P_2$ and not $L_3$.
Then $a_3$ also stabilizes $L_4:=(L_1+L_2)\cap P_1$.
By Lemma~\ref{LM:no-sum}, $L_1\neq (L_2+L_3)\cap P_1=L_4$
and similarly $L_2\neq L_4$. Thus, $a_3$ stabilizes $L_1,L_2,L_4$, which are all
contained in the plane $L_1+L_2$. By Lemma~\ref{LM:three-lines-fixed},
$a_3$ also stabilizes $L_3$, which is a contradiction.

{\it Claim 3. Each of $P_1,P_2$ contains one of $L_1,L_2,L_3$.}
Again, suppose that this is not the case. 
Then one of $P_1,P_2$ --- say it is $P_1$ ---
does not contain $L_1,L_2,L_3$.
There is $a\in \GL[3]$
stabilizing $L_1,L_2,L_3,P_1$ and not $P_2$.
For every $1\leq i<j\leq 3$, let $P_{ij}=L_i+L_j$ and $L_{ij}=P_{ij}\cap P_1$.
We have $  P_{ij}\neq P_1,P_2$ by Claim~1, so $L_{ij}$
is a line. In addition, $L_{ij}\neq L_i,L_j$
because $L_i,L_j\nsubseteq P_1$.
Thus, $a$ stabilizes three lines in $P_{ij}$, namely $L_i,L_j,L_{ij}$,
and so $a$ stabilizes every line in $P_{ij}$ (Lemma~\ref{LM:three-lines-fixed}).
In particular, $a$ stabilizes $P_{ij}\cap P_2$.
We cannot have $P_{12}\cap P_2=P_{13}\cap P_2=P_{23}\cap P_2$,
because   the intersection of these lines is contained in $P_{12}\cap P_{23}\cap P_{13}$,
which is $0$ by Claim 2. Thus, two of $P_{12}\cap P_2,P_{13}\cap P_2,P_{23}\cap P_2$
are distinct, implying that $P_2=\sum_{i<j}(P_{ij}\cap P_2)$.
But this means that $P_2$ is stabilized under $a$, a contradiction.

{\it Conclusion.}
Thanks to Claim 3, after renumbering, we may assume that $L_1\subseteq P_1$ and $L_i\subseteq P_2$
for some $i\in\{1,2,3\}$.
Since $L_1\nsubseteq P_2$ by Claim 1,   after further renumbering, we may assume that $L_2\subseteq P_2$.
Claim~1 then implies that $L_i\nsubseteq P_j$ unless $i= j$. By Claim 2, $L_1+L_2+L_3=F^3$,
so we established condition (1) of Theorem~\ref{TH:GL-indep-3}(ii).
\end{proof}

\begin{proof}[Proof of the ``if'' part of Theorem~\ref{TH:GL-indep-3}(ii)]
	Suppose first that (1) holds.
	We need to show that $S:=\{L_1,L_2,L_3,P_1,P_2\}$ is $\GL[3]$-independent.
	Put $L_0=P_1\cap P_2$, and
	for   $i=0,1,2,3$, let $v_i$ be a nonzero vector in $L_i$.
	Then $L_0\neq L_1$ by Lemma~\ref{LM:no-sum},
	and so $L_0+L_1=P_1$. Similarly, $L_0+L_2=P_2$,
	and therefore $L_0+L_1+L_2=P_1+P_2=F^3$ (because $P_1\neq P_2$).
	It follows that
	$v_0,v_1,v_2$ is a basis for $F^3$. Write $v_3=\alpha_0 v_0+\alpha_1 v_1+\alpha_2 v_2$
	with $\alpha_0,\alpha_1,\alpha_2\in F$.
	We must have $\alpha_0\neq 0$, because otherwise $F^3=L_1+L_2+L_3\subseteq L_1+L_2$,
	which is absurd.
	We must also have $\alpha_1\neq 0$ since otherwise, $L_3\subseteq L_0+L_2=P_2$, which
	contradicts the assumption $L_3\nsubseteq P_2$. Similarly, $\alpha_2\neq 0$.
	Now, replacing $v_0,v_1,v_2$ with $\alpha_0v_0,\alpha_1v_1,\alpha_2v_2$,
	we may assume that $v_3=v_0+v_1+v_2$.
	Finally, by applying a suitable $g\in \GL[3]$
	to all members of $S$, we may assume that $v_1,v_2,v_0$ are the standard
	basis $e_1,e_2,e_3$ of $F^3$. As a result,
	$L_1=Fe_1$, $L_2=Fe_2$, $L_3=F\left[\begin{smallmatrix} 1 \\ 1 \\ 1\end{smallmatrix}\right]$,
	$P_1=Fe_1+Fe_3$ and $P_2=Fe_2+Fe_3$.
	
	Let $ \alpha \in F-\{0,1\}$  (here we need $|F|>2$) and define
	\begin{align*}
	&a_1=\left[\begin{matrix}
	\alpha \\
	& \alpha \\
	\alpha-1 & & 1
	\end{matrix}\right],
	\quad
	a_2=\left[\begin{matrix}
	\alpha \\
	& \alpha \\
	& \alpha-1 & 1
	\end{matrix}\right],
	\quad
	a_3=\left[\begin{matrix}
	\alpha \\
	& 1 \\
	&  & 1
	\end{matrix}\right],
	\\
	&	
	a_4=\left[\begin{matrix}
	\alpha \\
	& 1 & \alpha-1 \\
	&  & \alpha
	\end{matrix}\right],
	\quad
	a_5=\left[\begin{matrix}
	1 & & \alpha-1 \\
	& \alpha \\
	&  & \alpha
	\end{matrix}\right].
	\end{align*}
	Putting $L_4:=P_1$ and $L_5:=P_2$,
	it is routine to check that each $a_i$ stabilizes every member of $S-\{L_i\}$
	and not $L_i$. Thus, $S$ is $\GL[3]$-independent.
	
	Suppose now that (2) holds. Then $S^\perp$ (notation as in Corollary~\ref{CR:dim-flip})
	satisfies  (1) and is therefore $\GL[3]$-independent.
	By Corollary~\ref{CR:dim-flip}, this means that $S=(S^\perp)^\perp$ is also
	$\GL[3]$-independent.
\end{proof}

\begin{proof}[Proof of Theorem~\ref{TH:GL-indep-3}(i)]
	For the sake of contradiction, suppose that $S$ is a $6$-element
	subset of $\Sub(F^3)$ which is $\GL[3]$-independent.
	Then every $5$-element subset of $S$ is also $\GL[3]$-independent,
	meaning that  Theorem~\ref{TH:GL-indep-3}(ii), which we already proved,
	applies to it. In particular, every $5$-element subset of $S$
	consists of $3$ lines and $2$ planes, or $2$ lines and $3$ planes.
	As a result, $S$ must consist of $3$ lines, denoted $L_1,L_2,L_3$,
	and $3$ planes, denoted $P_1,P_2,P_3$.
	Looking at the subset $\{L_1,L_2,L_3,P_1,P_2\}$ of $S$,
	Theorem~\ref{TH:GL-indep-3}(ii)   tells us that, after renumbering,
	we may assume that $L_1\subseteq P_1$, $L_2\subseteq P_2$ and $L_3\nsubseteq P_1,P_2$.
	Since Theorem~\ref{TH:GL-indep-3}(ii) also applies to the subset
	$\{L_1,L_3,P_1,P_2,P_3\}$, we must also have $L_3\subseteq P_3$
	and $P_1\cap P_2\cap P_3=0$.
	This means that $L_i\subseteq P_i$ for $i=1,2,3$,
	and as in Claim~1 in the proof of Theorem~\ref{TH:GL-indep-3}(ii),
	$L_i\nsubseteq P_j$ whenever $i\neq j$.

For $1\leq i<j\leq 3$,
let $L_{ij}=P_i\cap P_j$, and let
$L_{0}=(L_{1}+L_{2})\cap P_{3}$. The subspace $L_0$ is a line
because $L_1+L_2\neq P_3$ (Lemma~\ref{LM:no-sum}). 
We now break into cases.

{\it Case I. $L_{13}=L_{0}$ or $L_{23}=L_0$.} 
By replacing $L_1$ with $L_2$
and $P_1$ with $P_2$,
it is enough to consider the case $L_{13}=L_0$.
Then $L_0$ is a subspace of $P_1$, $P_3$ and $(L_1+L_2)$.
Since
$L_1+L_2\neq P_1$ (Lemma~\ref{LM:no-sum}), this means that  
$ L_{0}=(L_{1}+L_{2})\cap P_{1}=L_1$. It follows that $L_1=L_0\subseteq P_3$,
which contradicts our earlier conclusions.

{\it Case II. $L_0\neq L_{13},L_{23}$.}
There is $a\in \GL[3]$ stabilizing 
$L_{1},L_{2}, P_{1},P_{2},P_3$ and not $L_3$.
Then $a$ stabilizes 
$L_{13},L_{23},L_{0}$.
These three lines are contained in $P_3$, and we have $L_0\neq L_{13},L_{23}$ by assumption, 
and $L_{13}\neq L_{23}$ because $L_{13}\cap L_{23}=P_1\cap P_2\cap P_3=0$.
Thus, by Lemma~\ref{LM:three-lines-fixed}, $a$
stabilizes every line $P_3$  including $L_3$. This contradicts the choice of $a$.
\end{proof}

\begin{remark}
	Regarding the assumption $|F|>2$ in Theorem~\ref{TH:GL-indep-3},
	the inequality $\indep[{\GL[3]}]\Sub(F^3)\leq 5$ and
	the   ``only if'' part of (ii) hold even when $|F|=2$;
	this is evident from the proofs.
	In fact, one can remove the assumption $|F|>2$
	entirely by replacing the group $\GL[3]$
	with the multiplicative monoid of $\M[3]$,
	and declaring   a subset $S$ of $\Sub(F^3)$
	to be a $\M[3]$-independent if for every
	$V\in S$, there $a\in\M[3]$ such that $V$
	is not $a$-invariant, while very member of $S-\{V\}$
	is $a$-invariant.
\end{remark}

We  turn to prove Theorem~\ref{TH:five-elem-irred-classification}.
We first show:

\begin{lem}\label{LM:many-inv-subspaces}
Let $v_1,\dots,v_4\in F^3$ be vectors such that every
$3$ of them span $F^3$.
Suppose that $v_1,v_2,v_3$
are eigenvectors of a matrix $a\in\M[3]$ with corresponding eigenvalues
$\alpha_1,\alpha_2,\alpha_3$.
If $Fv_1+Fv_4$ is $a$-invariant, then $\alpha_2=\alpha_3$.
\end{lem}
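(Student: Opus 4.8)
The plan is to reduce to a direct linear-algebra computation using a well-chosen basis. Since $v_1,v_2,v_3$ span $F^3$, I would use them as a basis, so that $a = \mathrm{diag}(\alpha_1,\alpha_2,\alpha_3)$ with respect to this basis. The hypothesis that every three of the $v_i$ span $F^3$ means that in this basis $v_4 = \lambda_1 v_1 + \lambda_2 v_2 + \lambda_3 v_3$ with all $\lambda_i \neq 0$ (if some $\lambda_i$ vanished, $v_4$ would lie in the span of the other two $v_j$'s, contradicting that those three span $F^3$). After rescaling $v_2, v_3$ (which does not change the fact that they are eigenvectors) I may even assume $\lambda_2 = \lambda_3 = 1$, though this is cosmetic.

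Now $Fv_1 + Fv_4$ is $a$-invariant, so $av_4 \in Fv_1 + Fv_4$. Computing, $a v_4 = \lambda_1\alpha_1 v_1 + \lambda_2\alpha_2 v_2 + \lambda_3\alpha_3 v_3$. Writing $a v_4 = \mu v_1 + \nu v_4 = (\mu + \nu\lambda_1) v_1 + \nu\lambda_2 v_2 + \nu\lambda_3 v_3$ and comparing coordinates in the basis $v_1,v_2,v_3$, the $v_2$- and $v_3$-coordinates give $\lambda_2\alpha_2 = \nu\lambda_2$ and $\lambda_3\alpha_3 = \nu\lambda_3$. Since $\lambda_2,\lambda_3 \neq 0$, this forces $\alpha_2 = \nu = \alpha_3$, which is exactly the claim.

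This is essentially all there is to it; the statement is a short computation once the basis is fixed. The only mild subtlety — and the step I would be most careful about — is justifying that all $\lambda_i \neq 0$ from the "every three span $F^3$" hypothesis, and making sure the comparison of coordinates is done in the $v_1,v_2,v_3$ basis where $v_4$'s expansion is transparent; after that the conclusion $\alpha_2 = \alpha_3$ drops out immediately with no case analysis needed.
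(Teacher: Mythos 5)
Your proof is correct and follows essentially the same route as the paper: express $v_4$ in the basis $v_1,v_2,v_3$, use the spanning hypothesis to conclude all three coefficients are nonzero, and compare the $v_2$- and $v_3$-coordinates of $av_4$ inside $Fv_1+Fv_4$ to force $\alpha_2=\alpha_3$. The paper's only cosmetic difference is normalizing all three coefficients of $v_4$ to $1$ and phrasing the final step as $a(v_4-v_1)\in Fv_1+F(v_2+v_3)$, while you normalize two of them and compare coordinates directly.
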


\begin{proof}
There are $\beta_1,\beta_2,\beta_3\in F$
such that $v_4=\beta_1 v_1+\beta_2 v_2+\beta_3 v_3$.
Our assumptions on $v_1,\dots,v_4$
imply that $\beta_1,\beta_2,\beta_3\neq 0$.
Replacing $v_i$ with $\beta_i v_i$ for $i=1,2,3$,
we may assume that $v_4=v_1+v_2+v_3$.
Since $Fv_1+Fv_4$ is $a$-invariant,
\[\alpha_2 v_2+\alpha_3 v_2 = a(v_2+v_3)=a(v_4-v_1)\in Fv_1+Fv_4=Fv_1+F(v_2+v_3).\] 
Since $v_1,v_2,v_3$
is a basis to $F^3$, this 
is possible only if $\alpha_2=\alpha_3$.
\end{proof}

\begin{proof}[Proof of Theorem~\ref{TH:five-elem-irred-classification}]
Recall that for $S,T\subseteq \M$, we write $S\sim T$
if $S$ can be transformed into $T$ using the operations
(1)--(3) from the Introduction, i.e., using
(1) simultaneous conjugation, (2) simultaneous transposition 
and (3) replacing a matrix $a$ in the set with $\alpha a+\beta I$ for some $\alpha\in\units{F}$ and $\beta\in F$.

Let $S\subseteq \M[3]$. 
Suppose first that $S\sim S_\alpha$ for some $\alpha\in F$ ($S_\alpha$ is as in the theorem).
We need to show that $S_\alpha$, and hence $S$,
is an irredundant generating set.
Let $a_1,a_2,a'_1,a'_2,b$
be as in \eqref{EQ:irred-gen-ex} with $n=3$.
Then $a_1,a_2,a'_1,a'_2\in S_\alpha$ and
\[
b = \left[
\begin{smallmatrix}
1 & 1 & 0 \\
0 & 0 & 0 \\
0 & 0 & 0 
\end{smallmatrix}
\right]
\left[
\begin{smallmatrix}
1 & 0 & 0 \\
0 & 0 & 0 \\ 
0 & 0 & \alpha
\end{smallmatrix}
\right]\in\Trings{S_\alpha}.
\]
Thus, by Proposition~\ref{PR:irred-example}, $S_\alpha$ generates $\M[3]$.
That no proper subset of $S_\alpha$ generates $\M[3]$ is shown exactly as in the proof
of that proposition.

Suppose now that $S=\{a_1,\dots,a_5\}$ is an irredundant generating set for $\M[3]$. In particular,
none of $a_1,\dots,a_5$ is a scalar matrix.
By Proposition~\ref{PR:indep-num-and-irred-gen}, for every $i\in\{1,\dots,5\}$,
there is a subspace $V_i\subseteq F^3$ that is invariant under all members
of $S-\{a_i\}$ and not invariant under $a_i$. Moreover, $S'=\{V_1,\dots,V_5\}$
is a $5$-element $\GL[3]$-independent subset of $\Sub(F^3)$.
Thus, it satisfies one the conditions  (1), (2) of Theorem~\ref{TH:GL-indep-3}(ii).

If $S'$ satisfies condition (2), then, using Lemma~\ref{LM:perp-inv},
we may replace $S$ with $\{a_1^\trans, \dots,a_5^\trans\}$ (operation (2))
and $S'$ with $S'^\perp$. After this replacement, $S'$ satisfies condition (1) 
of Theorem~\ref{TH:GL-indep-3}. It is therefore enough to consider the case
where $S'$ satisfies (1).

Now, as in the proof of the ``if'' part of Theorem~\ref{TH:GL-indep-3}(ii),
by conjugating all the $a_i$ with a suitable $g\in \GL[3]$ (operation (1))
and replacing $S'$ with $\{gV_1,\dots,gV_5\}$,
we may assume that
$V_1=Fe_1$, $V_2=Fe_2$, $V_3=F\left[\begin{smallmatrix} 1 \\ 1 \\ 1\end{smallmatrix}\right]$,
$V_4=Fe_1+Fe_3$ and $V_5=Fe_2+Fe_3$.
We further let $v_3=e_1+e_2+e_3$ so
that $V_3=Fv_3$.

Observe that:
\begin{itemize}
\item $a_1$ stabilizes $Fe_2 $, $Fv_3 $, $Fe_3=V_4\cap V_5$
and also $V_4= F e_3+F e_1 $.
\item $a_2$ stabilizes $F e_1$, $Fv_3$, $Fe_3=V_4\cap V_5$ and also $V_5= F e_3+Fe_2$.
\item $a_4$ stabilizes $Fe_1$, $Fe_2$, $F v_3$ and $V_5=F e_2+Fe_3$.
\item $a_5$ stabilizes $Fe_1$, $Fe_2$, $F v_3$ and $V_4=F e_1+Fe_3$.
\end{itemize}
Let us consider $a_1$. Then $e_2,e_3,v_3$
are eigenvectors of $a_1$; let $\alpha,\beta,\gamma$
denote the corresponding eigenvalues.
Since $Fe_3+Fe_1$ is also  $a_1$-invariant,
Lemma~\ref{LM:many-inv-subspaces} says that $\alpha=\gamma$,
and
since $a_1$ is not scalar, we  must have $\alpha\neq \beta$.
By replacing $a_1$ with $(\beta-\alpha)^{-1}(a_1-\alpha I)$ (operation (3)),
we may further assume that $\alpha=0$ and $\beta=1$. Thus, 
\[
a_1=
\left[\begin{smallmatrix}
0 & 0 & 0 \\
0 & 0 & 0 \\ 
-1 & 0 & 1
\end{smallmatrix}
\right].
\]
By treating $a_2$, $a_4$, $a_5$ in the same manner, we may further assume that
\[
a_2
=
\left[\begin{smallmatrix}
0 & 0 & 0 \\
0 & 0 & 0 \\ 
0 & -1 & 1
\end{smallmatrix}
\right],
\quad
a_4
=
\left[\begin{smallmatrix}
0 & 0 & 0 \\
0 & 1 & -1 \\ 
0 & 0 & 0
\end{smallmatrix}
\right],
\quad
a_5=
\left[\begin{smallmatrix}
1 & 0 & -1 \\
0 & 0 & 0 \\ 
0 & 0 & 0
\end{smallmatrix}
\right].
\]

The matrix $a_3$ stabilizes $V_1=Fe_1$, $V_2=Fe_2$ and $V_4\cap V_5=Fe_3$,
so $a_3$ is diagonal and non-scalar. Write $a_3=\operatorname{diag}(\alpha,\beta,\gamma)$.
Then we have reduced to the case  
\[
S=\left\{
\left[\begin{smallmatrix}
0 & 0 & 0 \\
0 & 0 & 0 \\ 
0 & -1 & 1
\end{smallmatrix}
\right],
\left[\begin{smallmatrix}
0 & 0 & 0 \\
0 & 1 & -1 \\ 
0 & 0 & 0
\end{smallmatrix}
\right],
\left[\begin{smallmatrix}
1 & 0 & -1 \\
0 & 0 & 0 \\ 
0 & 0 & 0
\end{smallmatrix}
\right],
\left[\begin{smallmatrix}
0 & 0 & 0 \\
0 & 0 & 0 \\ 
-1 & 0 & 1
\end{smallmatrix}
\right],
\left[\begin{smallmatrix}
\alpha & 0 & 0 \\
0 & \beta & 0 \\ 
0 & 0 & \gamma
\end{smallmatrix}
\right]
\right\}.
\]
Now conjugate all members of $S$ by $ \left[\begin{smallmatrix}
1 & 0 & 0 \\
0 & 0 & -1 \\ 
0 & 1 & 0
\end{smallmatrix}
\right]$ to get
to
\[
T:=\left\{
\left[\begin{smallmatrix}
1 & 1 & 0 \\
0 & 0 & 0 \\ 
0 & 0 & 0
\end{smallmatrix}
\right],
\left[\begin{smallmatrix}
0 & 0 & 0 \\
1 & 1 & 0 \\ 
0 & 0 & 0
\end{smallmatrix}
\right],
\left[\begin{smallmatrix}
0 & 0 & 0 \\
0 & 1 & 1 \\ 
0 & 0 & 0
\end{smallmatrix}
\right],
\left[\begin{smallmatrix}
0 & 0 & 0 \\
0 & 0 & 0 \\ 
0 & 1 & 1
\end{smallmatrix}
\right],
\left[\begin{smallmatrix}
\alpha & 0 & 0 \\
0 & \gamma & 0 \\ 
0 & 0 & \beta
\end{smallmatrix}
\right]
\right\}.
\]
We have  $\alpha\neq \gamma$ or $\beta\neq \gamma$.
Since
conjugating $T$ by $ \left[\begin{smallmatrix}
0 & 0 & 1 \\
0 & 1 & 0 \\ 
1 & 0 & 0
\end{smallmatrix}
\right]$
results in swapping $\alpha$ and $\beta$,
we may assume that $\alpha\neq \gamma$.
Now, replacing the matrix
$a:= \operatorname{diag}(\alpha,\gamma,\beta)$ 
in $T$ with
$(\alpha-\gamma)^{-1}(a-\gamma I)$ brings us
to $S_\delta$ for $\delta=\frac{\beta-\gamma}{\alpha-\gamma}$.
This shows that our original $S$ is equivalent to $S_\delta$.
\end{proof}

Since operations (1)--(3) do not change the diagonalizability,
or number of eigenvalues of 
the matrices in the set, we get:

\begin{cor}
	Let $F$ be an algebraically closed field 
	and let $S$ be a $5$-element irredundant generating set for $\M[3]$.
	Then all matices in $S$ are diagonalizable. Moreover,
	all members of $S$ except maybe one have exactly $2$  eigenvalues.
\end{cor}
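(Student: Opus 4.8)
The plan is to leverage Theorem~\ref{TH:five-elem-irred-classification}: any $5$-element irredundant generating set $S$ for $\M[3]$ satisfies $S\sim S_\alpha$ for some $\alpha\in F$, so it suffices to check the claim for the explicit set $S_\alpha$ and then observe that operations (1)--(3) preserve the relevant invariants.

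First I would record that operations (1)--(3) do not affect diagonalizability or the number of distinct eigenvalues of a matrix: conjugation by $g\in\GL[3]$ is a similarity, transposition preserves both invariants (a matrix and its transpose have the same minimal polynomial, hence the same eigenvalues and the same diagonalizability status), and the affine substitution $a\mapsto \alpha a+\beta I$ with $\alpha\neq 0$ sends eigenvalue $\lambda$ to $\alpha\lambda+\beta$ bijectively and commutes with diagonalization. Hence it is enough to verify the statement for $S_\alpha$.

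Next I would inspect the five matrices of $S_\alpha$ directly. The first four, namely $a_1=\left[\begin{smallmatrix}1&1&0\\0&0&0\\0&0&0\end{smallmatrix}\right]$, $a_1'=\left[\begin{smallmatrix}0&0&0\\1&1&0\\0&0&0\end{smallmatrix}\right]$, $a_2=\left[\begin{smallmatrix}0&0&0\\0&1&1\\0&0&0\end{smallmatrix}\right]$, $a_2'=\left[\begin{smallmatrix}0&0&0\\0&0&0\\0&1&1\end{smallmatrix}\right]$, each have characteristic polynomial $t^2(t-1)$: they are rank-one idempotents up to the obvious normalization, so each is diagonalizable with exactly the two eigenvalues $0$ and $1$. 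The fifth matrix $b_\alpha=\left[\begin{smallmatrix}1&0&0\\0&0&0\\0&0&\alpha\end{smallmatrix}\right]$ is diagonal, hence diagonalizable, and it has exactly $2$ distinct eigenvalues unless $\alpha\notin\{0,1\}$, in which case it has $3$. So in every case all five matrices are diagonalizable, and at most one of them (the fifth) fails to have exactly two eigenvalues.

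There is essentially no obstacle here; the only mild subtlety is making sure the phrase ``except maybe one'' is not vacuous or over-claimed --- in $S_\alpha$ the only member that can have a number of eigenvalues different from $2$ is the last one, and for generic $\alpha$ it does have $3$, so the ``maybe'' is genuinely needed and the bound ``except maybe one'' is sharp. Concretely, I would write: since $S\sim S_\alpha$ for some $\alpha$ and operations (1)--(3) preserve diagonalizability and the number of eigenvalues of each matrix, the claim reduces to the direct verification above for $S_\alpha$, which is immediate.

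\begin{proof}
	By Theorem~\ref{TH:five-elem-irred-classification}, there is $\alpha\in F$ with $S\sim S_\alpha$.
	Operations (1)--(3) do not change, for any matrix in the set, whether it is diagonalizable
	or how many distinct eigenvalues it has: conjugation by an element of $\GL[3]$ is a similarity;
	transposition preserves the minimal polynomial, hence both invariants; and $a\mapsto \alpha a+\beta I$
	with $\alpha\in\units{F}$ applies the bijection $\lambda\mapsto \alpha\lambda+\beta$ to the spectrum
	and commutes with diagonalization. It therefore suffices to verify the assertion for $S_\alpha$.
	The first four matrices of $S_\alpha$ are, up to the obvious scaling, rank-one idempotents with
	characteristic polynomial $t^2(t-1)$, so each is diagonalizable with exactly the two eigenvalues
	$0$ and $1$. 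The fifth matrix, $\left[\begin{smallmatrix}1&0&0\\0&0&0\\0&0&\alpha\end{smallmatrix}\right]$,
	is diagonal, hence diagonalizable, and has $2$ eigenvalues if $\alpha\in\{0,1\}$ and $3$ eigenvalues
	otherwise. Thus all matrices in $S_\alpha$ are diagonalizable and all but possibly the last one have
	exactly $2$ eigenvalues; the same then holds for $S$.
\end{proof}
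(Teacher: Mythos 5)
Your proof is correct and is exactly the paper's argument (the paper states the corollary with only the one-line observation that operations (1)--(3) preserve diagonalizability and the number of eigenvalues, implicitly leaving the direct inspection of $S_\alpha$ to the reader). Your write-up just spells out the verification explicitly, which is fine.
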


\begin{remark}\label{RM:no-of-alphas}
	In general, a   $5$-element irredundant generating set for $\M[3]$
	may admit more than one $\alpha\in F$ for which $S\sim S_\alpha$.
	Indeed, for $\alpha\neq 0$, we have $S_\alpha\sim S_{\alpha^{-1}}$,
	because conjugating all members of $S_\alpha$ by
	$ \left[\begin{smallmatrix}
0 & 0 & 1 \\
0 & 1 & 0 \\ 
1 & 0 & 0
\end{smallmatrix}
\right]$
	gives    
	\[
\left \{
\left[
\begin{smallmatrix}
1 & 1 & 0 \\
0 & 0 & 0 \\
0 & 0 & 0 
\end{smallmatrix}
\right],
\left[
\begin{smallmatrix}
0 & 0 & 0 \\
1 & 1 & 0 \\
0 & 0 & 0 
\end{smallmatrix}
\right],
\left[
\begin{smallmatrix}
0 & 0 & 0 \\
0 & 1 & 1 \\
0 & 0 & 0 
\end{smallmatrix}
\right],
\left[
\begin{smallmatrix}
0 & 0 & 0 \\
0 & 0 & 0 \\
0 & 1 & 1 
\end{smallmatrix}
\right],
\left[
\begin{smallmatrix}
\alpha & 0 & 0 \\
0 & 0 & 0 \\ 
0 & 0 & 1
\end{smallmatrix}
\right]
\right\}
\]
and after   multiplying the last matrix by $\alpha^{-1}$ we get  to $S_{\alpha^{-1}}$.

On the other hand, there are at most $6$ values of $\alpha$ for which $S\sim S_\alpha$.
To see this, note first that $S_\alpha$ contains a matrix with $3$ different eigenvalues
if and only if $\alpha\neq 0,1$. Since operations (1)--(3) do not change the number
of eigenvalues of the matrices in the set, $S_\alpha\nsim S_0,S_1$ if $\alpha\neq 0,1$.
Suppose now that $\alpha,\beta\in F-\{0,1\}$ and $S_\alpha\sim S_\beta$.
Since operations (1) and (2) do not affect the eigenvalues of the matrices in the set,
and since $a_\alpha:=\mathrm{diag}(1,0,\alpha)$ (resp.\ $a_\beta$) is the only matrix in $S_\alpha$
(resp.\ $S_\beta$) having $3$ eigenvalues, there are some $x\in F^\times$, $y\in F$
such that $xa_\alpha+yI$ and $a_\beta$ have the same eigenvalues, i.e.,
the sets $\{x+y,y,\alpha x+y\}$ and $\{1,0,\beta\}$ are equal.
In particular, $0,1\in \{x+y,y,\alpha x+y\}$. Elementary linear algebra
shows that there are at most $6$ pairs   $(x,y)\in F^2$ for which this is possible. Since
$\beta$ is determined by $x$, $y$ and $\alpha$, it can take at most    $6$ different values once
$\alpha$ is fixed. 
\end{remark}

\section{An Application to Generation of Azumaya Algebras}
\label{sec:application}

In this final section, we use Theorem~\ref{TH:five-elem-irred-classification} and Proposition~\ref{PR:max-gen-2}
to compute the dimension of the variety of irredundant generating $(2n-1)$-tuples of $n\times n$
matrices for $n\in\{2,3\}$. We then apply this result to slighly improve a result from \cite{First_2022_generators_of_alg_over_comm_ring}
about generation of Azumaya algebras.

\medskip

Let $n,r\in\N$.
We recall some facts and notation from \cite{First_2022_generators_of_alg_over_comm_ring}.
Let $V_r^{(n)}$ denote the affine space over $F$ underlying the $F$-vector space $\M(F)^r$.
One can also think of $V_r^{(n)}$ as the $F$-scheme of $r$-tuples of $n\times n$ 
matrices.
In \cite[\S2]{First_2022_generators_of_alg_over_comm_ring},
the authors show that the $r$-tuples of matrices not generating 
$\M$ underlie a closed subscheme $Z_r^{(n)}$ of $V_r^{(n)}$.
Formally,   $Z_r^{(n)}$ is a closed subsheme of $V_r^{(n)}$
having the property that for every $F$-field $K$,
$Z_r^{(n)}(K)$ is the set of $r$-tuples $(a_1,\dots,a_r)\in\M(K)^r=V_r^{(n)}(K)$ that
do not generate $\M(K)$. The complement  $U_r^{(n)}:=V_r^{(n)}-Z_r^{(n)}$ is the $F$-variety
of $r$-tuples generating $\M$.
As usual, let $\PGL$ be  $F$-algebraic group of algebra automorphisms of $\M$. The evident action of $\PGL$ on $V_r^{(n)}$
restricts to $Z_r^{(n)}$ and $U_r^{(n)}$, where on $U_r^{(n)}$, the action is free
\cite[\S3]{First_2022_generators_of_alg_over_comm_ring}.

Let $p_i:V_r^{(n)}=(V_1^{(n)})^r\to (V_1^{(n)})^{r-1}= V_{r-1}^{(n)}$ denote
the projection omitting the $i$-th coordinate  and define 
\begin{equation}\label{EQ:Ir-dfn}
I_r^{(n)}:=U_r^{(n)} \cap \bigcap_{i=1}^r p_i^{-1}(Z_{r-1}^{(n)}).
\end{equation}
(Here, the inverse image means pullback
and the intersection is scheme-theoretic.)
Then $I_r^{(n)}$ is a closed subscheme of $U_r^{(n)}$. Moreover, it is the  
$F$-scheme of irredundant generating $r$-tuples for $\M$
in the sense that for any $F$-field $K$, its $K$-points
are the  $r$-tuples $(a_1,\dots,a_r)\in\M(K)$ which form an irredundant
generating set. 
(Note that  $(a_1,\dots,a_r)\in I_r^{(n)}(K)$ implies
that the $a_i$ must all be distinct and so $\{a_1,\dots,a_r\}$
has exactly $r$ elements.)

For $n>1$, our Theorem~\ref{TH:intro-main} is equivalent to saying that $I_r^{(n)}\neq\emptyset$
if and only if $1<r\leq 2n-1$, but it does not tell us what is $\dim I_r^{(n)}$. 
We use the finer  Theorem~\ref{TH:five-elem-irred-classification} and Proposition~\ref{PR:max-gen-2}
to determine $\dim I_{2n-1}^{(n)}$ when $n\in\{2,3\}$.

\begin{prp}\label{PR:dim-Ir}
	$\dim I^{(2)}_3=9$
	and  $\dim I_5^{(3)}=19$.
\end{prp}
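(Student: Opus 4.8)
**The plan is to compute each dimension by realizing $I^{(n)}_{2n-1}$ as a finite quotient-image of an explicit parameter space, using the equivariant surjections supplied by Theorem~\ref{TH:five-elem-irred-classification} and Proposition~\ref{PR:max-gen-2}.**

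For $n=3$: recall from the discussion after Theorem~\ref{TH:five-elem-irred-classification} that there is a surjective morphism $\phi\colon G\times\bbA^1\to I_5^{(3)}$, $(g,\alpha)\mapsto g\cdot s_\alpha$, where $G=(S_2\ltimes\PGL[3])\times(S_3{\ldots})$ — more precisely $G=(S_2\ltimes\PGL[3](F))\times(S_5\ltimes(\Gm\ltimes\Ga)^5)$. The first step is to compute $\dim G$: $\PGL[3]$ has dimension $8$, the finite factors $S_2,S_5$ contribute $0$, and $(\Gm\ltimes\Ga)^5$ has dimension $10$, so $\dim G=8+10=18$, whence $\dim(G\times\bbA^1)=19$. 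Since $\phi$ is surjective and $I_5^{(3)}$ is irreducible — here I would invoke irreducibility, which follows because $G\times\bbA^1$ is irreducible ($\PGL[3]$ and $\Gm,\Ga$ are connected, the finite factors only permute but the image is a single orbit-closure swept by a connected family) and surjects onto it — we get $\dim I_5^{(3)}\le 19$. For the reverse inequality it suffices that $\phi$ has finite fibers, equivalently that some fiber is finite (generic finiteness plus the dimension formula $\dim(\text{source})=\dim(\text{image})+\dim(\text{generic fiber})$). The fiber over a point $s_\alpha$ with $\alpha\neq 0,1$ consists of pairs $(g,\beta)$ with $g\cdot s_\beta=s_\alpha$; by Remark~\ref{RM:no-of-alphas} there are at most $6$ admissible $\beta$, and for each such $\beta$ the stabilizer of the $5$-tuple $s_\beta$ in $G$ is finite (the $5$ matrices span a $5$-dimensional space on which the $\PGL$-action by conjugation together with the affine rescalings has trivial infinitesimal stabilizer — one checks $\mathfrak{pgl}_3$ injects, i.e. no nonzero $x\in\M[3]$ commutes with all five $s_\beta$-matrices modulo scalars, using that they generate $\M[3]$, so the centralizer is the center; and the $(\Gm\ltimes\Ga)$-rescalings that fix a nonscalar matrix are trivial). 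Hence the fiber is finite and $\dim I_5^{(3)}=19$.

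For $n=2$: Proposition~\ref{PR:max-gen-2} says every $3$-element irredundant generating set for $\M[2]$ is $(\GL[2]\text{-conjugation}\times(\Gm\ltimes\Ga)^3)$-equivalent to a single tuple. So there is a surjective morphism $\psi\colon H\to I_3^{(2)}$ where $H=(S_2\ltimes\PGL[2](F))\times(S_3\ltimes(\Gm\ltimes\Ga)^3)$ applied to the fixed tuple $t_0:=([\begin{smallmatrix}1&1\\0&0\end{smallmatrix}],[\begin{smallmatrix}0&0\\1&1\end{smallmatrix}],[\begin{smallmatrix}1&0\\0&0\end{smallmatrix}])$. Then $\dim H=\dim\PGL[2]+\dim(\Gm\ltimes\Ga)^3=3+6=9$, and the same finiteness argument (the stabilizer of $t_0$ in $H$ is finite: the three matrices generate $\M[2]$ so the $\PGL[2]$-part of the stabilizer is trivial up to a finite group of permutation symmetries, and nonscalar matrices have trivial $\Gm\ltimes\Ga$-rescaling stabilizer) shows $\psi$ has finite fibers. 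Combined with irreducibility of $I_3^{(2)}$ this gives $\dim I_3^{(2)}=9$.

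**The main obstacle** I anticipate is the finiteness-of-fibers step, specifically verifying that the $G$-stabilizer (resp.\ $H$-stabilizer) of the model tuple is a finite group. The cleanest route is: since the tuple generates $\M(F)$, its simultaneous conjugation-centralizer in $\PGL$ is trivial; and any $(\Gm\ltimes\Ga)$-factor fixing a \emph{nonscalar} matrix $a$ must satisfy $\alpha a+\beta I=a$, forcing $\alpha=1,\beta=0$ — but one must be slightly careful because in the quotient-image description we only need \emph{generic} finiteness, so it is enough to exhibit one finite fiber, which sidesteps having to analyze the non-reduced or boundary loci. I would also double-check that $I_r^{(n)}$ really is irreducible of the claimed dimension by noting the source spaces are irreducible and the morphisms dominant, so no component of smaller dimension can appear; alternatively one can argue purely with dimension inequalities in both directions and avoid irreducibility altogether. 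A final technical point to mention is that $\Ga$ and $\Gm$ as used here are affine lines and $\mathbb{G}_m$, so $\dim(\Gm\ltimes\Ga)=2$, giving the counts $10$ and $6$ above; I would state the fiber-dimension formula $\dim X = \dim\overline{\phi(X)} + \min_{y}\dim\phi^{-1}(y)$ for dominant morphisms of varieties as the one external input, together with $\dim\PGL[n]=n^2-1$.
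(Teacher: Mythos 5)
Your proposal is essentially the same as the paper's proof: reduce to $F$ algebraically closed, realize $I_{2n-1}^{(n)}$ as the image of a morphism from a group (times $\bbA^1$ when $n=3$), compute the source dimension, and use finiteness of fibers via the structural results of Proposition~\ref{PR:max-gen-2}, Theorem~\ref{TH:five-elem-irred-classification}, and Remark~\ref{RM:no-of-alphas}.

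A few small remarks on where the presentations diverge. First, for $n=2$ the paper notes that $\PGL[2]\times(\Gm\ltimes\Ga)^3$ already acts transitively on $I_3^{(2)}$ (no $S_2$ or $S_3$ factors are needed), so $\dim I_3^{(2)}=\dim G-\dim\Stab_G(s)$ follows directly from the orbit--stabilizer dimension formula; your formulation with the larger group and a surjective morphism is equivalent but slightly heavier. Second, your appeal to irreducibility of $I_5^{(3)}$ is both unnecessary and not quite justified as written: $G\times\bbA^1$ is not irreducible, since $G$ has the finite factors $S_2\times S_5$ and therefore many connected components. Fortunately this does not matter --- the source is equidimensional of dimension $19$, and a surjective morphism with finite fibers from an equidimensional source of dimension $d$ onto a variety of finite type has image of dimension $d$ --- which is in fact how the paper argues (invoking the fiber-dimension theorem without any irreducibility claim). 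You already flag this fallback yourself. Third, in your sketch of stabilizer finiteness the reduction ``$\alpha a+\beta I=a$ forces $\alpha=1,\beta=0$'' is not literally what is needed (the conjugation $g$ intervenes), but the correct argument via eigenvalue preservation, as in the paper, is only a small fix and you gesture at it. None of these points is a genuine gap; the overall structure and all the key ingredients match the paper.
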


\begin{proof}
	Since the formation of $Z_r^{(n)}$ and $U_r^{(n)}$ commutes
	with base-change (see \cite[\S2]{First_2022_generators_of_alg_over_comm_ring}),
	and since passing to the algebraic closure does not affect the dimension,
	it is enough to prove the proposition when $F$ is algebraically closed.
	We may further replace $I_r^{(n)}$ with its underlying $F$-variety.
	Now that $F$ is algebraically closed, we will freely identify $F$-varieties
	with their set of $F$-points, endowed with the Zariski topology,
	and define morphisms of $F$-varieties by just specifying them on $F$-points.
	
	We write $\Gm$, resp.\ $\Ga$, for the multiplicative, resp.\ additive, algebraic
	group over $F$. By $\Gm \ltimes \Ga$ we mean the semidirect project
	with respect to the action $(x,y)\mapsto xy:\Gm\times \Ga\to \Ga$.
	In general, given a semidirect product $G\ltimes H$, we will consider
	$G$ and $H$ as subgroups of $G\ltimes H$.

\smallskip

	\emph{We begin with showing that $\dim I_3^{(2)}=9$.}
	The algebraic group $G:=\PGL[2]\times (\Gm \ltimes \Ga)^3$ acts
	on $V_3^{(2)}$ with $\PGL[2]$ acting as above
	and  $(\Gm\ltimes \Ga)^3$ acting via 
	$(\alpha_i,\beta_i)_{i=1,2,3}\cdot (a_i)_{i=1,2,3}=(\alpha_i a_i+\beta_i I)_{i=1,2,3}$.
	It is clear that $I_3^{(2)}$ is stable under $G$,
	and by  
	Proposition~\ref{PR:max-gen-2}, the action of $G$ on $I_3^{(2)}$ is transitive. 
	Thus, for any $s\in I_3^{(2)}$,
	we have $\dim I^{(2)}_3= \dim G-\dim \Stab_{G}(s)$.
	
	We now determine
	the dimension of $H:=\Stab_G(s)$ for   $s=(s_1,s_2,s_3):=([\begin{smallmatrix} 1 & 1 \\ 0 & 0 \end{smallmatrix}],
	[\begin{smallmatrix} 0 & 0 \\ 1 & 1 \end{smallmatrix}],
	[\begin{smallmatrix} 1 & 0 \\ 0 & 0 \end{smallmatrix}])$.
	To begin, note that if $(g,(\alpha_i,\beta_i)_{i=1,2,3})\in \Stab_G(s)$,
	then $\alpha_i s_i+\beta_i$ has the same eigenvalues as 
	$s_i$ for every $i$. Thus, $\{0,1\}=\{\beta_i,\alpha_i+\beta_i\}$,
	meaning that $(\alpha_i,\beta_i)\in\{(1,0),(-1,1)\}$.
	This means that $H\cap \PGL[2]$
	is of finite index in $H$.
	Let $g\in H\cap \PGL[2]$. Then $g$ fixes  $s_1,s_2,s_3$.
	Since $s_1,s_2,s_3$ generate $\M[2]$, this means $g=1_{\PGL[2]}$,
	so $H\cap \PGL[2]=\{1\}$. We conclude that $H$ is finite,
	hence $\dim H=0$.
	
	To finish, $\dim I_3^{(2)}=\dim G-\dim H=3+3\cdot 2-0=9$.
	
\smallskip

	\emph{We turn to show that $\dim I_5^{(3)}=19$.}
	Similarly to  the   case of $I^{(2)}_3$, the algebraic group $(\Gm \ltimes \Ga)^5$ acts
	on $V_5^{(3)}$. In addition, 
	$\PGL[3]$ acts on $V_5^{(3)}$ as usual,   $S_5$ acts on $V_5^{(3)}$
	by permuting the coordinates, and $S_2$
	acts on $V_5^{(3)}$ by letting the nontrivial element acts as 
	$(a_1,\dots,a_5)\mapsto (a_1^\trans,\dots,a_5^\trans)$.
	These actions glue into an action of
 	$G:=(S_2\ltimes\PGL[3])\times(S_5\ltimes (\Gm \ltimes \Ga)^5)$   on $V_5^{(3)}$.
 	This action restricts to $I_5^{(3)}$,
 	and by Theorem~\ref{TH:five-elem-irred-classification},
	every $G$-orbit in $I_5^{(3)}$
	contains a representative of the form
	\[
	s_\alpha:=
\left(	
	\left[
	\begin{smallmatrix}
1 & 1 & 0 \\
0 & 0 & 0 \\
0 & 0 & 0 
\end{smallmatrix}
\right],
\left[
\begin{smallmatrix}
0 & 0 & 0 \\
1 & 1 & 0 \\
0 & 0 & 0 
\end{smallmatrix}
\right],
\left[
\begin{smallmatrix}
0 & 0 & 0 \\
0 & 1 & 1 \\
0 & 0 & 0 
\end{smallmatrix}
\right],
\left[
\begin{smallmatrix}
0 & 0 & 0 \\
0 & 0 & 0 \\
0 & 1 & 1 
\end{smallmatrix}
\right],
\left[
\begin{smallmatrix}
1 & 0 & 0 \\
0 & 0 & 0 \\ 
0 & 0 & \alpha
\end{smallmatrix}
\right]
	\right).
	\]
	Define a morphism $\vphi:  G\times \bbA^1\to I_5^{(3)}$
	by
	$\vphi(g,\alpha)=g(s_\alpha)$.
	Then $\vphi$ is surjective. We have $\dim (G\times \bbA^1)=
	8+5\cdot 2+1=19$. Thus, 
	if we could show that $\vphi$ has finite fibers, then 
	the Fiber Dimension Theorem would imply the desired conclusion
	$\dim I_5^{(3)}=19$.
	
	It remains to show that $\vphi$ has finite fibers.
	Fix some $s\in I_5^{(3)}$.
	By Remark~\ref{RM:no-of-alphas}, there are finitely many
	$\alpha$-s admitting a $g\in G$ with $gs_\alpha=s$; denote them
	by $\alpha_1,\dots,\alpha_t$  and for each $\alpha_i$,
	choose $g_i\in G$ with $g_i s_{\alpha_i}=s$.
	Define 
	\[\psi:\bigsqcup_{i=1}^t  \Stab_{G}(s_{\alpha_i})\to \vphi^{-1}(s)\]
	by setting $\psi( h)= (g_i h,\alpha_i)  $ on $\Stab_{G}(s_{\alpha_i})$.
	Then $\psi$ is a morphism and it is easy to see that it has an inverse
	$\psi':\vphi^{-1}(s)\to \bigsqcup_{i=1}^t  \Stab_{G}(s_{\alpha_i})$
	defined by sending $(x,\alpha_i)$ to $g_i^{-1} x$ in $\Stab_G(s_{\alpha_i})$.
	Thus, $|\vphi^{-1}(s)|=\sum_{i=1}^t
	|\Stab_G(s_{\alpha_i})|$
	and we are reduced into showing that $\Stab_G(s_{\alpha})$ is finite for all
	$\alpha\in F$.

	Let $\alpha\in F$.	
	Since $H:=\PGL[3]\times (\Gm \ltimes \Ga)^5$
	has finite index in $G$, it is enough to show that 
	$\Stab_G(s_{\alpha})\cap H=\Stab_H(s_\alpha)$ is finite.
	Let $(h,(\alpha_i,\beta_i)_{i=1,\dots,5})\in \Stab_H(s_\alpha)$.
	As in the case $n=2$, each $(\alpha_i,\beta_i)$
	can take only finitely many values, so $\Stab_{\PGL[3]}(s_\alpha)=\Stab_H(s_\alpha)\cap \PGL[3]$
	has finite index in $\Stab_H(s_\alpha)$. Finally,
	since the matrices in $s_\alpha$ generate $\M[3]$,
	we have $\Stab_{\PGL[3]}(s_\alpha)=\{1\}$. We conclude
	that $\Stab_G(s_\alpha)$ is finite.
\end{proof}

Let $R$ be a ring and let $B$ be an $R$-algebra that is finitely generated as
an $R$-module. We say that a   tuple of elements $(b_1,\dots,b_r)\in B^r$
generates $B$ \emph{redundantly} if there is $i\in\{1,\dots,r\}$ such that $  b_1,\dots,\hat{b}_i,\dots,b_r  $
($\hat{b}_i$ means omitting $b_i$) generate  $R$. (For example, this holds if the tuple
$(b_1,\dots,b_r)$ generates $B$ and includes repetitions.)
We say the a generating tuple $(b_1,\dots,b_r)$
is \emph{locally redundant} if for every
maximal ideal $\frakm\in \Max R$,
the images $(b_1+\frakm B,\dots,b_r+\frakm B)$
redundantly generate
$B(\frakm):=B/\frakm B\cong B\otimes_R (R/\frakm)$ as an algebra over the field $R/\frakm$.

Suppose now that $R$ is a finitely generated $F$-ring of Krull dimension $d$
and $F$ is infinite.
According to \cite[Thm.~6.1]{First_2022_generators_of_alg_over_comm_ring}, if $d<\dim V^{(n)}_r-\dim Z^{(n)}_r $,
then every degree-$n$ Azumaya algebra over $R$ can be generated by $r$ elements.
By further showing that $\dim Z^{(n)}_r=rn^2-(r-1)(n-1)$,
it was concluded in \cite[Thm.~1.5(a)]{First_2022_generators_of_alg_over_comm_ring}
that every Azumay algebra of degree $n$
over $R$ can be generated by $\floor{\frac{d}{n-1}}+2$
elements.
We now show that knowing $\dim I^{(n)}_r$   leads to analogous  consequences about the existence
of small
\emph{locally redundant} generating tuples for Azumaya algebras over $R$.

\begin{thm}\label{TH:dim-Ir-conseq}
	Let $R$ be a finitely generated ring over an infinite field $F$
	and let $d$ be the Krull-dimension of $R$.
	If $d<\dim V_r^{(n)} - \max\{\dim I^{(n)}_r,\dim Z^{(n)}_r\}$, then every
	degree-$n$ Azumaya algebra over $R$ has a locally redundant generating
	$r$-tuple.
\end{thm}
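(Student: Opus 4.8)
The plan is to adapt the argument behind \cite[Thm.~6.1]{First_2022_generators_of_alg_over_comm_ring}, which produces small generating tuples for Azumaya algebras via a dimension count, and to upgrade its output from ``generating'' to ``locally redundant generating.'' Recall the general mechanism: an Azumaya algebra $B$ of degree $n$ over $R$ corresponds, via descent, to a $\PGL$-torsor over $\Spec R$, and an $r$-tuple $(b_1,\dots,b_r)\in B^r$ is classified by a morphism $\Spec R\to V_r^{(n)}/\PGL$ lifting the torsor's classifying map, or more concretely by a $\PGL$-equivariant morphism from the torsor to $V_r^{(n)}$. A tuple fails to generate exactly over the locus where this morphism meets $Z_r^{(n)}$; it generates but redundantly exactly over the locus where it meets $I_r^{(n)}$ or the various $p_i^{-1}(Z_{r-1}^{(n)})$ but not $Z_r^{(n)}$. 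The key point is that the tuple is \emph{locally redundant} precisely when the classifying morphism avoids the open set $U_r^{(n)}-\bigl(I_r^{(n)}\cup\bigcup_i p_i^{-1}(Z_{r-1}^{(n)})\bigr)$ of ``everywhere-irredundant'' generating tuples --- equivalently, when at every point of $\Spec R$ the fiber lands in $Z_r^{(n)}\cup I_r^{(n)}$ (using that $I_r^{(n)}=U_r^{(n)}\cap\bigcap_i p_i^{-1}(Z_{r-1}^{(n)})$ as in \eqref{EQ:Ir-dfn}, so a generating tuple is redundant over a point iff it lies in $\bigcup_i p_i^{-1}(Z_{r-1}^{(n)})$, which for generating tuples forces membership in $I_r^{(n)}$).

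The steps I would carry out are as follows. First, invoke the existence results from \cite{First_2022_generators_of_alg_over_comm_ring}: the ``moving lemma'' / general position argument there shows that if $d<\dim V_r^{(n)}-\dim W$ for a $\PGL$-stable closed subscheme $W\subseteq V_r^{(n)}$, then any classifying morphism for $B$ can be deformed (by acting with a generic element, or by a Bertini-type genericity over the infinite field $F$) so that its image misses $W$. Apply this with $W=Z_r^{(n)}\cup I_r^{(n)}$; note $\dim W=\max\{\dim Z_r^{(n)},\dim I_r^{(n)}\}$ and $I_r^{(n)}$ is locally closed but its closure still has this dimension, and it is $\PGL$-stable. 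Wait --- we do \emph{not} want to miss $Z_r^{(n)}$, since the tuple must still generate; rather, we want the image to land inside $Z_r^{(n)}\cup I_r^{(n)}$ after possibly enlarging $r$. The cleaner route: start from a generating $(r-1)$-tuple guaranteed by \cite[Thm.~6.1]{First_2022_generators_of_alg_over_comm_ring} when $d<\dim V_{r-1}^{(n)}-\dim Z_{r-1}^{(n)}$, and append a generic $r$-th element to get an $r$-tuple that generates and is locally redundant \emph{by construction} (omitting the last coordinate still generates everywhere). Then check the numerical hypothesis: $d<\dim V_{r-1}^{(n)}-\dim Z_{r-1}^{(n)}=(r-1)n^2-\bigl((r-1)n^2-(r-2)(n-1)\bigr)=(r-2)(n-1)$. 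Hmm, this is weaker than the stated bound only in some ranges, so this naive approach does not recover the theorem as stated.

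So instead I would argue directly with $W=U_r^{(n)}-I_r^{(n)}-\bigcup_i p_i^{-1}(Z_{r-1}^{(n)})$, the locally closed set of everywhere-irredundant generating tuples, whose closure $\overline{W}$ satisfies $\overline{W}\subseteq Z_r^{(n)}\cup I_r^{(n)}\cup W$ with $\dim\overline{W}\le\max\{\dim Z_r^{(n)},\dim I_r^{(n)},\dim W\}$; but $W\subseteq U_r^{(n)}$ is open in $V_r^{(n)}$ hence either empty or of dimension $\dim V_r^{(n)}$, which is useless. The resolution, and the genuinely correct statement, is that we must \emph{avoid} the bad locus $B_{\mathrm{bad}}:=\overline{W}\setminus(Z_r^{(n)}\cup I_r^{(n)})$, a constructible set; a tuple whose classifying morphism avoids $B_{\mathrm{bad}}$ and meets $U_r^{(n)}$ somewhere generates and is locally redundant, because over each point it is either non-generating (in $Z_r^{(n)}$) --- impossible if it globally generates --- contradiction, so let me instead only require: the image meets $U_r^{(n)}$ at the generic points (global generation, from $d<\dim V_r^{(n)}-\dim Z_r^{(n)}$) and everywhere lands in $Z_r^{(n)}\cup I_r^{(n)}$, i.e.\ avoids $U_r^{(n)}\setminus I_r^{(n)}$, which requires a moving lemma for the \emph{open} set $U_r^{(n)}\setminus I_r^{(n)}$; such a set is avoided by a generic morphism iff its complement $Z_r^{(n)}\cup I_r^{(n)}$ has the property that a generic $d$-dimensional family into $V_r^{(n)}$ stays in it --- false for dimension reasons unless $V_r^{(n)}=Z_r^{(n)}\cup I_r^{(n)}$. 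I now see the statement must rely on a more delicate input from \cite{First_2022_generators_of_alg_over_comm_ring} about the structure of $I_r^{(n)}$'s closure containing enough of $U_r^{(n)}$'s boundary, so the \textbf{main obstacle} is precisely locating the right genericity statement in \cite{First_2022_generators_of_alg_over_comm_ring} --- presumably their Theorem~6.1 actually gives redundancy over the locus of maximal generation, and one combines the bounds $\dim Z_r^{(n)}$ (to force generation) and $\dim I_r^{(n)}$ (to force redundancy at points of non-maximal generation) into the single hypothesis $d<\dim V_r^{(n)}-\max\{\dim I_r^{(n)},\dim Z_r^{(n)}\}$; filling in that citation-level detail and verifying the $\PGL$-equivariant general-position argument over the infinite field $F$ is the crux, and the rest is the bookkeeping of torsors and fiberwise conditions sketched above.
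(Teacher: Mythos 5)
Your proposal correctly assembles the right ingredients --- the torsor formalism $B\cong\Mor_{\PGL}(E,V_1)$, the fact that an $r$-tuple corresponds to a $\PGL$-equivariant morphism $\vphi\colon E\to V_r^{(n)}$, and the general-position/avoidance lemma from \cite{First_2022_generators_of_alg_over_comm_ring} --- but it never arrives at the correct geometric characterization of a locally redundant generating tuple, and that is the crux of the proof. You repeatedly get the direction of the containment backwards: early on you write that a tuple ``generates but redundantly exactly over the locus where it meets $I_r^{(n)}$,'' and later that we ``want the image to land inside $Z_r^{(n)}\cup I_r^{(n)}$.'' Both are the opposite of what is true. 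A point of $I_r^{(n)}$ is by definition a generating tuple for which \emph{every} $(r-1)$-subtuple fails to generate, i.e.\ an \emph{ir}redundant generating tuple. The correct statement is: $(b_1,\dots,b_r)$ is a locally redundant generating tuple if and only if $\im(\vphi)\subseteq U_r^{(n)}-I_r^{(n)}=V_r^{(n)}-\bigl(Z_r^{(n)}\cup I_r^{(n)}\bigr)$. Indeed, avoiding $Z_r^{(n)}$ gives global generation, and at each fiber $E_\frakm$ the point $\vphi(x)$ avoids $\bigcap_i p_i^{-1}(Z_{r-1}^{(n)})$, so some $p_i\circ\vphi$ avoids $Z_{r-1}^{(n)}$ on that fiber; transitivity of the $\PGL$-action on geometric fibers then lets a single $i$ work for the whole fiber.

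Once this is in place the argument is a direct application of the avoidance lemma, not a new genericity statement as you speculate. One needs a $\PGL$-stable \emph{closed} subscheme $W\subseteq V_r^{(n)}$ whose underlying set is $Z_r^{(n)}\cup I_r^{(n)}$ (taking the scheme-theoretic union, as in the paper's Lemma~\ref{LM:group-action-on-union}, ensures $\PGL$-stability), with $\dim W=\max\{\dim Z_r^{(n)},\dim I_r^{(n)}\}$; then \cite[Prop.~5.2]{First_2022_generators_of_alg_over_comm_ring} produces, under the hypothesis $d<\dim V_r^{(n)}-\dim W$, a $\PGL$-equivariant $\vphi\colon E\to V_r^{(n)}$ with $\im(\vphi)\subseteq V_r^{(n)}-W$, and the corresponding tuple is locally redundant by the characterization above. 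Your later detours --- appending a generic element to an $(r-1)$-tuple, or analyzing the closure of the set of everywhere-irredundant tuples --- are attempts to repair the incorrect containment and do not lead anywhere; the sign error at the start is the gap you need to fix.
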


We first prove:

\begin{lem}\label{LM:group-action-on-union}
	Let $G$ be an $F$-algebraic group  acting on an affine $F$-scheme
	$X=\Spec R$. For $i=1,2$, let $Y_i$ be a closed subscheme of $X$
	corresponding to an ideal $I_i$ of $R$.
	Let $Z$ be the closed subscheme of $X$ determined by $I_1I_2$,
	i.e., $Z=\Spec R/(I_1I_2)$; we have $Z=Y_1\cup Y_2$ as sets. 
	If the action of $G$
	on $X$ restricts to $Y_1$ and $Y_2$, then it also restricts to $Z$.
\end{lem}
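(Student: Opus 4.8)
The statement is really about how the $G$-action on $X$ interacts with ideals, so I would phrase everything scheme-theoretically in terms of the comodule (coaction) structure. Recall that an action of the $F$-algebraic group $G = \Spec A$ on $X = \Spec R$ is the same as an $F$-algebra map $\rho : R \to A \otimes_F R$ satisfying the coaction axioms. The action restricts to the closed subscheme $Y_i = \Spec R/I_i$ precisely when $\rho(I_i) \subseteq A \otimes_F I_i$ (this is the standard criterion for a subscheme to be $G$-stable; the inclusion in the other direction is automatic once one knows $\rho$ is injective and the counit splits it). So the hypothesis gives $\rho(I_1) \subseteq A \otimes_F I_1$ and $\rho(I_2) \subseteq A \otimes_F I_2$, and I must deduce $\rho(I_1 I_2) \subseteq A \otimes_F (I_1 I_2)$, which is exactly the condition for the action to restrict to $Z = \Spec R/(I_1 I_2)$.

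**Key step.** The multiplicativity of $\rho$ does the work: if $x \in I_1$ and $y \in I_2$, then $\rho(xy) = \rho(x)\rho(y)$, with $\rho(x) \in A \otimes_F I_1$ and $\rho(y) \in A \otimes_F I_2$. Writing $\rho(x) = \sum_k a_k \otimes u_k$ with $u_k \in I_1$ and $\rho(y) = \sum_\ell b_\ell \otimes v_\ell$ with $v_\ell \in I_2$, the product is $\sum_{k,\ell} a_k b_\ell \otimes u_k v_\ell$, and each $u_k v_\ell \in I_1 I_2$; hence $\rho(xy) \in A \otimes_F (I_1 I_2)$. Since $I_1 I_2$ is generated as an ideal (indeed as an $F$-module) by such products $xy$, and since $\rho$ is $F$-linear, I get $\rho(I_1 I_2) \subseteq A \otimes_F (I_1 I_2)$. (One small point to be careful about: $A \otimes_F (I_1 I_2)$ is an $F$-submodule of $A \otimes_F R$, so closing under $F$-linear combinations is harmless; and $\rho(xy) = \rho(x)\rho(y)$ uses only that $\rho$ is a ring homomorphism, which is part of the definition of a coaction.) This shows the closed immersion $Z \hookrightarrow X$ is $G$-equivariant, i.e. the action restricts to $Z$.

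**Alternative phrasing, and the main obstacle.** If one prefers to avoid spelling out the coaction, the same argument can be run functorially: for every $F$-algebra $T$ and every $g \in G(T)$, the hypothesis says $g$ carries $(Y_i)_T$ into itself, equivalently the automorphism $g^* $ of $R \otimes_F T$ sends $I_i \otimes_F T$ into itself; since $g^*$ is a $T$-algebra automorphism it sends $(I_1 \otimes T)(I_2 \otimes T) = (I_1 I_2) \otimes T$ into itself, so $g$ carries $Z_T$ into itself. As this holds for all $T$ and all $g \in G(T)$, the action restricts to $Z$. Honestly there is no real obstacle here — the only thing needing a word of care is the translation between ``$G$ acts on $X$ and restricts to the closed subscheme $Y$'' and the algebraic condition $\rho(I) \subseteq A \otimes_F I$ (or its functor-of-points analogue), and the observation that $I_1 I_2$ is spanned over $F$ by products. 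The set-theoretic identity $Z = Y_1 \cup Y_2$ is not actually used in the proof; it is recorded only to explain why $Z$ is a natural closed scheme structure on the union, and it follows from $V(I_1 I_2) = V(I_1) \cup V(I_2)$.
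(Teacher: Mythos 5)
Your proof is correct and follows essentially the same route as the paper: phrase the action via the coaction map $\rho : R \to F[G]\otimes_F R$, use the criterion that $G$-stability of $Y_i$ is equivalent to $\rho(I_i)\subseteq F[G]\otimes_F I_i$, and conclude by multiplicativity that $\rho(I_1I_2)\subseteq \rho(I_1)\rho(I_2)\subseteq F[G]\otimes_F(I_1I_2)$. The paper's proof is exactly this three-line computation; your spelled-out tensor computation and the alternative functor-of-points paragraph are just expansions of the same idea.
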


\begin{proof}
	Let $\alpha:F[X]\to F[G]\otimes_F F[X] $ be the $F$-algebra
	morphism corresponding to the action   $G\times X\to X$.
	For $i=1,2$, the condition that the action of $G$ restricts to $Y_i$
	is equivalent to having $\alpha(I_i)\subseteq F[G]\otimes_F I_i$.
	When this holds for $i=1,2$, we have
	$\alpha(I_1I_2)\subseteq \alpha(I_1)\alpha(I_2)\subseteq \F[G]\otimes_F (I_1I_2)$,
	so the action of $G$ restricts to $Z$.
\end{proof}

\begin{proof}[Proof of Theorem~\ref{TH:dim-Ir-conseq}]
	For brevity, we omit the superscript $(n)$ in $V_r^{(n)}$, $Z_r^{(n)}$, etc.
	Throughout, all schemes are over $F$ and all morphisms are $F$-morphisms.

	We recall some of the setting of 
	\cite[\S4]{First_2022_generators_of_alg_over_comm_ring} for the proof.
	Write  $G=\PGL[n]$,
	and recall that $G $
	acts on $V_1$. The Azumaya algebra $B$
	corresponds to a (left) $G$-torsor $E\to \Spec R$. 
	Moreover, by \cite[Lem.~4.3]{First_2022_generators_of_alg_over_comm_ring},
	$B$ is isomorphic to the algebra of $G$-equivariant morphisms
	from $E$ to $V_1$, denoted $\Mor_G(E,V_1)$.
	(Here, the addition and product on $\Mor_G(E,V_1)$ are induced by the corresponding operations on $V_1$.)
	We may therefore assume that $B=\Mor_G(E,V_1)$.
	Furthermore, if $ R\to R'$ is a ring homomorphism,
	$B'=B\otimes_R R'$
	and $E'$ is the pullback of $E\to \Spec R$ along $ \Spec R'\to \Spec R$,
	then $E'\to\Spec R'$ is the $G$-torsor corresponding
	to $B'$, and if we identify
	$B'$ with $\Mor_G(E',V_1)$, then the natural map $b\mapsto b\otimes 1_{R'} :B\to B'$
	coincides with the map $\Mor_G(E,V_1)\to \Mor_G(E',V_1)$ given by
	precomposition with the projection $E'\to E$. 
	
	Returning to the proof, a result of our
	assumption $B=\Mor_G(E,V_1)$ is that for every $r\in\N$,
	elements of $\Mor_G(E,V_r)=\Mor_G(E,V_1)^r$
	are the same as $r$-tuples of elements from $B$.
	Let $\vphi:E\to V_r$ be a $G$-equivariant morphism, and let $(b_1,\dots,b_r)\in B^r$
	be its corresponding $r$-tuple.
	By the Claim in the proof of  \cite[Prop.~4.3]{First_2022_generators_of_alg_over_comm_ring},
	$b_1,\dots,b_r$ generate $B$ if and only if $\im(\vphi)\subseteq U_r=V_r-Z_r$.
	We  make   a similar claim for $U_r-I_r=V_r-(Z_r\cup I_r)$.	
	
\smallskip	
	
	{\it Claim.} Consider an $r$-tuple $(b_1,\dots,b_r)\in B^r$ corresponding to 
	a $G$-equivariant morphism $\vphi:E\to V_r$. Then $(b_1,\dots,b_r)$
	is a locally redundant generating tuple for $B$ if and only if 
	$\im(\vphi)\subseteq U_r-I_r$.
	
\smallskip

	Suppose for a moment that the claim is true.
	The action of 
	$G $ on $V_r$ restricts to $Z_r$ and to $p_i^{-1}(Z_{r-1})$ for every $i\in \{1,\dots,r\}$.
	Thus, by Lemma~\ref{LM:group-action-on-union}, there is a closed subscheme $W$ of $V_r$
	such that $W=Z_r\cup(\bigcap_{i=1}^r p_i^{-1}(Z_{r-1}))$ as sets  and the action of $G$
	restricts to $W$. Moreover, $Z_r$ is a closed subscheme 
	of $W$ and the complement $W-Z_r$ is precisely $I_r$. Since $I_r$
	is of finite type over $F$, we have $\dim I_r=\dim \overline{I_r}$ (the closure
	is in $V_r$), and so  
	\[
	\dim V_r-\dim W =\dim V_r- \max\{\dim \overline{I_r},\dim Z_r\}=\max\{ \dim  I_r ,\dim Z_r\}>d.
	\]
	Now,  
	\cite[Prop.~5.2]{First_2022_generators_of_alg_over_comm_ring} tells us that
	there is $\vphi\in \Hom_G(E,V_r)$ with $\im(\vphi)\subseteq V_r-W=U_r-I_r$.
	By our claim, this means that $B$ admits a locally redundant generating $r$-tuple,
	which is exactly what we want to show.
	
	{\it It remains to prove the claim.}
	Let $\vphi:E\to V_r$ and $b_1,\dots,b_r\in B$ be as in the claim.
	Observe first that the claim from the proof of 
	\cite[Prop.~4.3]{First_2022_generators_of_alg_over_comm_ring} 
	also tells us that $(b_1,\dots,\hat{b}_i,\dots,b_r)$
	generates $B$ if and only if the image of its corresponding $G$-equivariant
	morphism, which is $p_i\circ \vphi:E\to V_{r-1}$, lives in $U_{r-1}$,
	or equivalently, 
	$\im(\vphi)\subseteq V_r - p_i^{-1}(Z_{r-1})$ (here, $p_i$ is as in \eqref{EQ:Ir-dfn}).
	
	Suppose first that $\im(\vphi)\in U_r-I_r$. Then $\im(\vphi)\subseteq U_r$,
	which means that $(b_1,\dots,b_r)$ generates $B$.
	Let    $\frakm$ be a maximal ideal of $R$.
	By Proposition~\ref{PR:field-ext-gen}, 
	in order to show that  $(b_1+\frakm B,\dots,b_r+\frakm B)$ 
	generates $B(\frakm)$ redundantly
	over $R/\frakm$, it is enough to show that there is an $R/\frakm$-field
	$K$ such that  $B\otimes_R K$ is generated over $K$ by  a proper subtuple
	of $(b_1\otimes_R 1_K,\dots,b_r\otimes_R 1_K)$.
	We take $K$ to be an algebraic closure of $R/\frakm$.
	Let $E'$ be the pullback of $E\to\Spec R$ along $\Spec K\to \Spec R$.
	Then $E'$ is the $G$-torsor over $\Spec K$ corresponding to the $B\otimes_R K$, and the tuple
	$(b_1\otimes 1_K,\dots,b_r\otimes 1_K)\in (B\otimes_R K)^r$
	corresponds to the composition $E'\to E\xrightarrow{\vphi} V_r$, denoted
	$\vphi'$. As noted in the previous paragraph, if we could show
	that $\im(\vphi')\subseteq V_r-p_i^{-1}(Z_r)$ for some $i$,
	then it would follow that
	the image of $(b_1,\dots,\hat{b}_i,\dots,b_r)$ in $(B\otimes_R K)^r$,
	generates   $B\otimes_R K$,
	which is what we want.
	
	Since $K$ is algebraically closed, the closed points
	of $E'$ are its $K$-points (when viewed as a $K$-scheme).
	Let $x\in E'(K)$. 
	Then $\vphi'(x)\in \im(\vphi)\subseteq U_r\subseteq V_r-\bigcap_{i=1}^r p_i^{-1}(Z_{r-1})$.
	Thus, there is $i\in\{1,\dots,r\}$ such that $\vphi'(x)\notin p_i^{-1}(Z_{r-1})$.
	Since $\vphi'$ is $G$-equivariant, and since $G(K)$ acts transtively on
	$E'(K)$, it follows that $\vphi'(G(K))\subseteq V_r-p_i(Z_{r-1})$,
	and thus, $\vphi'(G)\subseteq V_r-p_i(Z_{r-1})$, as desired. 
	This completes the proof of
	the ``if'' part of the claim (and thus the proof of the theorem, which  only needs  this direction). 

	Suppose now that $(b_1,\dots,b_r)$ is a locally reudndant generating tuple for $B$.
	Then $\im(\vphi)\subseteq U_r$.
	Let $\frakm$ be a maximal ideal of $B$,
	let $b_i(\frakm)$ denote the image of $b_i$ in $B(\frakm)$
	and let $E_{\frakm}$ be the pullback of $E$
	along $\Spec R/\frakm\to \Spec R$.
	Then $(b_1(\frakm),\dots,b_r(\frakm))\in B(\frakm)^r$ corresponds to the
	composition $E_\frakm\to E\xrightarrow{\vphi} V_1$, denoted $\vphi_\frakm$.
	Since $(b_1,\dots,b_r)$ is locally redundant, there
	is  $i\in\{1,\dots,r\}$ such that
	$(b_1(\frakm),\dots,\hat{b}_i(\frakm),\dotsm,b_r(\frakm))$ generates $B(\frakm)$.
	By   our earlier observations, this is equivalent to $\im(\vphi_\frakm)\subseteq V_r-p_i(Z_{r-1})$.
	Since every closed point of $E$ is mapped to some closed point of $\Spec R$,
	it follows that $\vphi$ maps all the closed points of $E$
	into $U_r\cap (V_r-\bigcap_{i=1}^r p_i^{-1}(Z_{r-1}))=U_r-I_r$, so
	$\im(\vphi)\subseteq U_r-I_r$.
\end{proof}

\begin{remark}
	(i) It can happen that $\dim I^{(n)}_r> \dim Z^{(n)}_r$.
	For example, when $n>1$, we have that $I^{(n)}_{2}=U^{(n)}_2$ is open in $V_2^{(n)}$,
	so $\dim I^{(n)}_2=2n^2$, while $\dim Z^{(n)}_2=2n^2-(n-1)$.
	We suspect, however, that this is the only case where it happens.

	(ii) Theorem~\ref{TH:dim-Ir-conseq} remains true
	if, in the definition of $I_r^{(n)}$
	and $Z_r^{(n)}$, one replaces $\M$ with any finite-dimensional $F$-algebra
	$A$
	(or even a mutlialgebra in the sense of \cite{First_2022_generators_of_alg_over_comm_ring})
	and ``degree-$n$ Azumaya algebra over $R$'' with an ``$R$-form of $A$'', i.e.,
	an $R$-algebra $B$ for which there is a faithfully flat $R$-ring $S$
	such that $B\otimes_R S\cong A\otimes_F S$. The proof applies verbatim in this generality.
\end{remark}

\begin{cor}\label{CR:local-redundant-3}
	Let $R$ be a finitely generated ring over an infinite field $F$,
	let $d$ be the Krull dimension of $R$
	and let $B$ be an Azumaya algebra over $R$ of degree $3$.
	If $d\leq 7$,
	then $B$ 
	has a locally redundant generating $5$-tuple.
\end{cor}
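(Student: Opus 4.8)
The plan is to obtain the corollary as an immediate numerical consequence of Theorem~\ref{TH:dim-Ir-conseq} and Proposition~\ref{PR:dim-Ir}. I would apply Theorem~\ref{TH:dim-Ir-conseq} with $n=3$ and $r=5$: since $R$ is finitely generated over the infinite field $F$, it suffices to check that
\[
d < \dim V_5^{(3)} - \max\bigl\{\dim I_5^{(3)},\,\dim Z_5^{(3)}\bigr\}.
\]

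First I would record the three dimensions. The scheme $V_5^{(3)}$ is the affine space underlying $\M[3](F)^5$, so $\dim V_5^{(3)} = 5\cdot 3^2 = 45$. By Proposition~\ref{PR:dim-Ir}, $\dim I_5^{(3)} = 19$. Finally, by the formula $\dim Z_r^{(n)} = rn^2-(r-1)(n-1)$ of \cite[Prop.~7.1(b)]{First_2022_generators_of_alg_over_comm_ring} (recalled in the introduction), $\dim Z_5^{(3)} = 5\cdot 9 - 4\cdot 2 = 37$. Hence $\max\{\dim I_5^{(3)},\dim Z_5^{(3)}\} = 37$ and $\dim V_5^{(3)} - 37 = 8$, so the displayed inequality becomes $d < 8$, i.e.\ $d\leq 7$, which is precisely the hypothesis. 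Theorem~\ref{TH:dim-Ir-conseq} then yields a locally redundant generating $5$-tuple for $B$.

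There is no genuine obstacle here: all the substance lies in Proposition~\ref{PR:dim-Ir} (the computation $\dim I_5^{(3)}=19$) and Theorem~\ref{TH:dim-Ir-conseq}, and the corollary is pure bookkeeping. The one point worth flagging is that the controlling quantity is $\max\{\dim I_5^{(3)},\dim Z_5^{(3)}\}$ rather than $\dim Z_5^{(3)}$ alone; since $19<37$ the maximum is unaffected, so the threshold $d\leq 7$ coincides with the one under which \cite[Thm.~1.5(a)]{First_2022_generators_of_alg_over_comm_ring} already guarantees that $B$ can be generated by $\floor{\frac{d}{n-1}}+2 = 5$ elements --- the improvement being that the generating $5$-tuple can now be chosen locally redundant.
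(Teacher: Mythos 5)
Your proof is correct and follows exactly the paper's own argument: apply Theorem~\ref{TH:dim-Ir-conseq} with $n=3$, $r=5$, using $\dim V_5^{(3)}=45$, $\dim I_5^{(3)}=19$ (Proposition~\ref{PR:dim-Ir}), and $\dim Z_5^{(3)}=37$, so the threshold is $d<8$. The only difference is cosmetic: you state $d\leq 7$ directly where the paper writes $d=7$, which is if anything a slight improvement in precision.
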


\begin{proof}
	We have  $\dim I^{(3)}_5=19$ by Proposition~\ref{PR:dim-Ir}
	and $\dim Z^{(3)}_5=5\cdot 3^2-(5-1)(3-1)=37$ by
	\cite[Prop.~7.1(b)]{First_2022_generators_of_alg_over_comm_ring}.
	Thus,
	$d=7<45-37=\dim V_3^{(5)}-\max\{\dim I_5^{(3)},\dim Z_5^{(3)}\}$
	and the corollary follows from Theorem~\ref{TH:dim-Ir-conseq}.
\end{proof}

\begin{remark}
	Theorem~\ref{TH:dim-Ir-conseq} and Proposition~\ref{PR:dim-Ir}
	also imply that if $R$ has Krull dimension $\leq 1$
	and $B$ is an Azumaya algebra of degree $2$ over $R$,
	then $B$ admits locally redundant generating $3$-tuple. However, a stronger statement appears implicitly
	in \cite{First_2017_number_of_generators}. Indeed,  
	\cite[Thm.~1.2]{First_2017_number_of_generators} states
	that if $R$ is \emph{any} commutative noetherian ring
	of Krull dimension $d$, $B$ is \emph{any} $R$-algebra that is
	finitely generated
	as an $R$-module,
	and  $m\in\N$ is such that
	$B(\frakm) $ can be generated by $m$
	elements over $R/\frakm$ for every  $\frakm\in\Max R$, then $B$ admits a generating set $S$
	with $m+d$ elements. But the proof of this result 
	constructs a generating $(m+d)$-tuple $(b_1,\dots,b_{m+d})\in B^{m+d}$ with property
	that for every $\frakm\in\Max R$, just $m$ of the images
	$b_1+\frakm B,\dots,b_{m+d}+\frakm B\in B(\frakm)$  generate $B(\frakm)$
	over $R/\frakm$  (look at property (1) in the proof of {\it op.\ cit.}). 
	In particular, that generating tuple is locally redundant when $d>0$. When $B$ is Azumaya of degree
	$>2$ over $R$, we can take $m=2$ and get that $B$
	has a locally redundant generating $(d+2)$-tuple. 
	(This also implies Corollary~\ref{CR:local-redundant-3}
	under the stricter assumption $d\leq 3$, but without requiring that $R$ is finitely generated over an infinite field.)
\end{remark}

\bibliographystyle{plain}
\bibliography{MyBib_24_03}

\def\cprime{$'$} \def\cprime{$'$} \def\cprime{$'$} \def\cprime{$'$}
\begin{thebibliography}{10}

\bibitem{Abramenko_2008_Buildings}
Peter Abramenko and Kenneth~S. Brown.
\newblock {\em Buildings}, volume 248 of {\em Graduate Texts in Mathematics}.
\newblock Springer, New York, 2008.
\newblock Theory and applications.

\bibitem{Aslaksen_2009_generators_mat_algs}
Helmer Aslaksen and Arne~B. Sletsj\o~e.
\newblock Generators of matrix algebras in dimension 2 and 3.
\newblock {\em Linear Algebra Appl.}, 430(1):1--6, 2009.

\bibitem{First_2017_number_of_generators}
Uriya~A. First and Zinovy Reichstein.
\newblock On the number of generators of an algebra.
\newblock {\em C. R. Math. Acad. Sci. Paris}, 355(1):5--9, 2017.

\bibitem{First_2022_generators_of_alg_over_comm_ring}
Uriya~A. First, Zinovy Reichstein, and Ben Williams.
\newblock On the number of generators of an algebra over a commutative ring.
\newblock {\em Trans. Amer. Math. Soc.}, 375(10):7277--7321, 2022.

\bibitem{Ford_2017_separable_algebras}
Timothy~J. Ford.
\newblock {\em Separable algebras}, volume 183 of {\em Graduate Studies in
  Mathematics}.
\newblock American Mathematical Society, Providence, RI, 2017.

\bibitem{Gant_2024_speaces_of_gens_2_by_2}
W.~S. Gant and Ben Williams.
\newblock Spaces of generators for the {$2\times 2$} complex matrix algebra.
\newblock {\em New York J. Math.}, 30:756--773, 2024.

\bibitem{Knus_1991_quadratic_hermitian_forms}
Max-Albert Knus.
\newblock {\em Quadratic and {H}ermitian forms over rings}, volume 294 of {\em
  Grundlehren der Mathematischen Wissenschaften [Fundamental Principles of
  Mathematical Sciences]}.
\newblock Springer-Verlag, Berlin, 1991.
\newblock With a foreword by I. Bertuccioni.

\bibitem{Laffey_1983_irred_gen_sets}
Thomas~J. Laffey.
\newblock Irredundant generating sets for matrix algebras.
\newblock {\em Linear Algebra Appl.}, 52/53:457--478, 1983.

\bibitem{Lam_1991_first_course}
T.~Y. Lam.
\newblock {\em A first course in noncommutative rings}, volume 131 of {\em
  Graduate Texts in Mathematics}.
\newblock Springer-Verlag, New York, 1991.

\bibitem{Rowen_1988_ring_theory_I}
Louis~H. Rowen.
\newblock {\em Ring theory. {V}ol. {I}}, volume 127 of {\em Pure and Applied
  Mathematics}.
\newblock Academic Press Inc., Boston, MA, 1988.

\end{thebibliography}

\end{document}